\documentclass[amssymb,11pt]{article}
\usepackage[colorlinks,linkcolor=blue,citecolor=cyan,urlcolor=black,hyperindex,CJKbookmarks,dvipdfm]{hyperref}
\usepackage[dvipdfm]{graphicx}
\usepackage{amsthm}
\usepackage{amsmath}
\usepackage{amssymb}
\usepackage{bm}
\usepackage{amsfonts}
\usepackage[subnum]{cases}
\usepackage{indentfirst}
\usepackage{booktabs}
\date{}
\oddsidemargin=-0.1cm \evensidemargin=-0.1pt \topmargin -2.1cm

\newtheorem{definition}{Definition}[section]

\newtheorem{algorithm}{Algorithm}[section]
\newtheorem{theorem}{Theorem}[section]

\newtheorem{remark}{Remark}[section]
\newtheorem{example}{Example}[section]

\renewcommand{\thefootnote}{\fnsymbol{footnote}}

 \setlength{\topmargin} {-50pt}
\setlength{\oddsidemargin}{0pt} \setlength{\evensidemargin}{0pt}
\setlength{\textwidth}{460pt} \setlength{\textheight}{660pt}
\begin{document}
\title{\Large Image segmentation based on the hybrid total variation model and the K-means clustering strategy}
\begin{upshape}
\author{Baoli Shi$^\dag$, Zhi-Feng Pang$^\dag$ and Jing Xu$^\S$}
\end{upshape} \maketitle
\begin{center}
\indent\indent \textsl{\scriptsize \hspace{5pt}$^\dag$College of
Mathematics  and Statistic, Henan University, kaifeng, 475004,
China.\\
$^\S$School of Statistic and Mathematics Zhejiang Gongshang University, Hangzhou, 310012, China
 \indent\indent}
\end{center}
\renewcommand{\thefootnote}{\fnsymbol{footnote}}
\footnotetext{

\ $^{\dagger}$This work is supported by NSF of China (Nos.U1304610,11001239,11101365, 11401170,11426137), Foundation of Henan Educational Committee of China (Nos.14A110018, 14B110019) and the National Basic Research Program of China
(973 Program)(No.2015CB856003..


\scriptsize \textsl{ E-mail address:baolishi1983@163.com; zhifengpang@163.com; jingxu@amss.ac.cn}\indent\indent\small }
\textbf{Abstract }The performance of image segmentation highly relies on the original inputting image. When the image is contaminated by some noises or blurs, we can not obtain the efficient segmentation result by using direct segmentation methods. In order to efficiently segment the contaminated image, this paper proposes a two step method based on the hybrid total variation model with a box constraint and the K-means clustering method. In the first step, the hybrid model is based on the weighted convex combination between the total variation functional and the high-order total variation  as the regularization term to obtain the original clustering data. In order to deal with non-smooth regularization term, we solve  this model by employing  the alternating split Bregman method.  Then, in the second step, the segmentation can be obtained by thresholding this clustering data into different phases, where the thresholds can be given by using the K-means clustering method. Numerical comparisons show that our proposed model can provide more efficient segmentation results dealing with the noise image and blurring image.
\begin{flushleft}
\textsl{\footnotesize{{\bf\textsl Keywords}:}} \ {\footnotesize Potts Model,
Alternating Direction Method of Multipliers(ADMM), Primal-dual Method(PDM), Data Clustering}
\end{flushleft}

\setcounter{equation}{0}
\section{Introduction}
Image segmentation has become increasingly important in the last decade, due to a fast expanding field of applications in image analysis and computer vision. It aims to separate objects of interest from each others or from the backward or   find boundaries of such objects.  Until now, a wide variety of techniques
including variational partial differential equation (PDE) image segmentation methods \cite{1,2,3,4,5}, histogram analysis, region growing and edge detection \cite{6,7}  have been proposed to solve the image segmentation problem.

The variational segmentation methods are characterized by deriving an energy functional from some a priori
mathematical model and minimizing this energy functional over all possible partitions. Among them, the region-based models \cite{4,5} have been proposed and applied to the image segmentation field by incorporating  region information so that image within each segmented region has a uniform characteristic such as  intensity
and texture.  The most well-known region-based model is the Mumford and Shah (MS) model \cite{3}, in which an image is decomposed into a set of regions within the bounded open set and these regions are separated by smooth edges. The difficulty in studying the MS model is that it involves two unknowns: the intensity function and the set of edges. So the MS model is hard to implement in practice since the discretization of the unknown set of edges is a very complex task. In order to effectively solve the MS model,  Chan and Vese (CV) \cite{5} proposed an easily handle model by assuming that the segmentation result was a piecewise constant image with two different constant values. The CV model has achieved good performance in image segmentation task because of its ability to obtain a larger convergence range and handle topological changes naturally \cite{8}. However, more work needs to be done on effective representation of regions and their boundaries for multi-phase segmentation. Vese and Chan extended the  work in \cite{9} to utilize multi-phase level set functionals to represent multiple regions. Similar to the two-phase case, the model is non-convex and thus the global minimization can not be guaranteed.  In order to overcome these drawbacks, convex relaxation methods \cite{4,10,11} and graph cut \cite{335,14,15,336,16,42} method were proposed in this field.

During some phases of  obtaining a  real image, we can only get a contaminated image due to the interference of some random noises and blurs. This interference leads not to obtain an expected segmentation results  by using classical methods such as the MS model and  clustering methods. So it is very important  to suppress these contaminated information before segmenting the image  \cite{16,18,19,20}. Recently a two step method based on the MS model was proposed in \cite{26}. The first step is through restoring the contaminated image to obtain a smooth solution by using the modified MS model under the space $W^{1,2}(\Omega)$ for the image domain $\Omega\subset R^2$. Once the restored image is obtained,  the second stage is to threshold  it into different phases by using the K-means clustering method.  However, the image space is not continuous in the region of edges, so  the space $W^{1,2}(\Omega)$ used in \cite{26} is not suitable. Moreover, the image pixel value represents physical quantity such as photon count or energy, which is often non-negative and has a upper bound or saturated value due to the finite number of bits being used for image representation. An intuitive approach is to solve the unconstrained problem and then project the restored values in  the original chosen range. However, this approach results in the presence of spurious ripples in the restored image. Chopping off these projection values may also introduce patches of black color which could be visually unpleasant. Therefore, imposing a priori constraint is a natural requirement for the image restoration problem. In this paper, by generalizing above two-step strategy to a more suitable space,  we propose a two-step method for the image segmentation problem. The motivation of our work is to present a hybrid total-variation model in the first step by combining edge based method with region based method to improve the efficiency of traditional models in \cite{12,27,35}. Furthermore, we constrain  pixel  values in this step to lie in a certain dynamic range in order to improve the preconditioning results. In addition, the proposed model is solved by an algorithm based on the alternating split Bregman method, which can guarantee  the implementation efficiency with respect to the resultant image quality based on the convergence of this method. Once the solution is obtained, we set it as the inputting data in the second step and use the K-means clustering method to threshold it into expected phases. Then we can obtain the segmentation by setting related regions to be different constants in these phases. The numerical experiments show that our proposed model and method are very effective for segmenting the degraded image.

The rest of the paper is organized as follows. In Second 2, we review some works in the image segmentation problem under the energy minimization framework. In section 3, we propose the hybrid total variation model with a constraint and then employ the  split Bregman method to solve it as the first step. In the second step, we give the detail of how to  threshold the clustering data generated by  the K-means clustering method into different phases. In  Section 4, we present some experimental results of our proposed strategy and compare their performance with the reviewed  models in Section 2. Some conclusions are drawn in Section 5.

\section{Related works}
Before we go into the details of our proposed hybrid total-variation segmentation models, we first review some  works that are closely related to this paper.
\subsection{Mumford-Shah (MS) model}
The MS model is an established image segmentation model with a wide range of applications. An energy functional  to be minimized is to find the set of discontinuities $\Gamma$ or the edge set, and a piecewise smooth approximation $u$ of a given image intensity function $f:\Omega\subset  R^2\rightarrow R$. Formally, the MS model can be written as
\begin{eqnarray}\label{21}
\min_{u,\Gamma}\left\{\frac{1}{2}\|f-u\|^2_{L^2(\Omega)}+\lambda\int_{\Omega\setminus\Gamma}|\nabla u|^2\mathrm{dx}+\gamma Length(\Gamma)\right\},
\end{eqnarray}
where $\lambda$ and $\gamma$ are positive constants.  However, the problem is hard in this form, as the two variables $u$ and $\Gamma$ of the optimization are of very difficult natures; the edge set $\Gamma$ may be quite general and may exhibit singularities. As alternative solutions to this problem, many
works have been done to simplify or modify the general MS  model (\ref{21}).
\subsection{The piecewise constant (PC) model}
The piecewise constant Mumford-Shah model is a simpler model obtained by assuming that image approximation
is constant inside each connected component (i.e. $u=c_i$, where $c_i$ is a constant). At this point, the MS model can be simplistically rewritten as
\begin{eqnarray}\label{22}
\min_{c_i,\Gamma}\sum_{i=1}^{\mathcal{M}}\int_{\Omega_i}|f-c_i|^2\mathrm{d}x+\gamma Length(\Gamma)
\end{eqnarray}
such that $\displaystyle\bigcup_{i=1}^{\mathcal{M}}\Omega_i=\Omega\setminus\Gamma$ and $\Omega_i\cap\Omega_j=\phi$ if $i\neq j$,
where $\mathcal{M}$ is the number of connected components.

To minimize the  model (\ref{22}), the zero level set  $\Gamma=:\{x\in\Omega|\phi(x):=0\}$  is  used to replace the unknown curve $\Gamma_i$,  where $\displaystyle\bigcup_{i=1}^{\mathcal{M}}\Gamma_i=\Gamma$.
For the case $\mathcal{M}=2$, the model (\ref{22}) is  the classical CV model \cite{5}, where Chan and Vese proposed to alternately obtain the curve $\Gamma$ and the constant values $c_1$ and $c_2$  by solving the following variational formulation
\begin{eqnarray}\label{23}
\left\{
\begin{array}{ll}
c_1(\phi)=\frac{\int_{\Omega}fH(\phi)\mathrm{dx}}{\int_{\Omega}  H(\phi)\mathrm{d}x},\\\vspace{3pt}
c_2(\phi)=\frac{\int_{\Omega}f(1-H(\phi))\mathrm{d}x}{\int_{\Omega}(1-H(\phi))\mathrm{d}x},\\\vspace{3pt}
\frac{\partial\phi}{\partial t}=\delta(\phi)\left[\gamma\mathrm{div}\frac{\nabla \phi}{|\nabla\phi|}-(f-c_1)^2+(f-c_2)^2\right],
\end{array}
\right.
\end{eqnarray}
where $H(\cdot)$ is the Heaviside function. Different from the CV model, the problem for ${\mathcal{M}}>2$ is more challenging. The main difficulty is to find effective representations of regions and their boundaries.  Several recent works are related to this problem. Vese and Chan \cite{9} proposed to use the four color theorem to provide segmentation for any number of objects in $\Omega$. One level set function can only separate two classes, hence $m_1=\mathrm{log}_2{\mathcal{M}}$ is the number of level sets necessary to segment ${\mathcal{M}}$ distinct classes.  A variant of the level set method in \cite{22}, the so-called piecewise constant level method (PCLSM), expresses the energy in terms of a labeling function. They proposed to use binary level set functions which took value $1$ and $-1$ instead of using the classical continuous level set function as did in \cite{9,5}. Unfortunately, the main disadvantage of both level set methods is their probable of getting stuck in a possibly inferior local minima because of their nonconvex formulations.

\subsection{The Potts model}

In the work of \cite{4},  Chan et al obtained  global optimal solution of the CV model when the two piecewise constant values are first known.  The key idea is to relax the characteristic function  in such a way that the characteristic function minimizers can be obtained from the minimizers of the relaxed problem by a simple thresholding procedure. This allows the non-convex Chan-Vese problem to be globally solved  by using standard convex minimization methods. Following this work and the assumption that we know the values of every piecewise constant, the multiphase segmentation problem (\ref{21}) can be written as
\begin{eqnarray}\label{24}
\min_{\Gamma}\sum_{i=1}^{\mathcal{M}}\int_{\Omega_i}|f-c_i|^2\mathrm{d}x+\gamma Length(\Gamma).
\end{eqnarray}
Actually, in this case, each pixel is assigned one of region labels (i.e. the known piecewise constants). Assume that a labeling $l:\Omega\rightarrow\{c_1,c_2,\cdots,c_{\mathcal{M}}\}$ indicates the $m$ regions each pixel belongs to $\Omega_i=\{l(x)=c_i\}$,  we introduce the indicator function $u:=(u_1,u_2,\cdots,u_{\mathcal{M}})\in BV(\Omega,\{0,1\})^{\mathcal{M}}$
\begin{eqnarray*}
u_i(x)=
\left\{
\begin{array}{ll}
1\hspace{5pt}\mathrm{if}\hspace{5pt}l(x)=c_i
\\0\hspace{5pt}\mathrm{otherwise}
\end{array}
\right.
\end{eqnarray*}
for $i=1,2,\cdots,\mathcal{M}$, where $BV$ refers to the bounded variation space,  then the model (\ref{24}) can be reduced to the formulation of the Potts model
\begin{eqnarray}\label{25}
\displaystyle\min_{u\in\mathcal{C}_1}\sum_{i=1}^{\mathcal{M}}\left\{\int_{\Omega}|f-c_i|^2u_i\mathrm{d}x+
\gamma\int_{\Omega}|\nabla u_i|\mathrm{d}x\right\},
\end{eqnarray}
where $\mathcal{C}_1:=\left\{u\in\{0,1\}^{\mathcal{M}}\mid\displaystyle\sum_{i=1}^{\mathcal{M}}u_i=1\right\}$.

The above Potts model is a nonconvex  problem due to the binary constraints. A major class of methods is based on the convex relaxation of the admissible set by allowing for the labeling functions
to take intermediate values from the unit simplex. That is to say, the problem (\ref{25}) is transformed into
\begin{eqnarray}\label{26}
\displaystyle\min_{u\in\mathcal{C}}\sum_{i=1}^{\mathcal{M}}\left\{\int_{\Omega}|f-c_i|^2u_i\mathrm{d}x+
\gamma\int_{\Omega}|\nabla u_i|\mathrm{d}x\right\},
\end{eqnarray}
where $\mathcal{C}:=\left\{u\in[0,1]^{\mathcal{M}}\mid\displaystyle\sum_{i=1}^{\mathcal{M}}u_i=1\right\}$.
Then we can solve this relaxed version by using some operator splitting methods \cite{23,24,25}. If the minimizer of the problem (\ref{26}) happens to be binary everywhere, it is also a global minimizer of the original problem (\ref{25}). However, unlike the two label problem \cite{5}, if the computed minimizer of the problem (\ref{26}) is not binary, there is no thresholding scheme which can keeps a binary global minimizer of (\ref{25}). Even if such a binary minimizer exists, the problem (\ref{26}) may result in nonbinary solutions due to non-uniqueness. Then a approximated strategy is to use the indicator function of the largest component $u_i$ as an approximate
binary solution \cite{23,24,25}.  Graph-based optimization techniques have been used as components in optimization
methods for functionals formulated in the continuous space. Boykov and Cremers \cite{335} proposed to use a max-flow/min-cut step to assist in level set updates. Grady \cite{336} employed a max-flow/min-cut operation
as a component of their piecewise constant Mumford-Shah complete computations.  More recently,  Bae etc. proposed a smooth dual model of the Potts model in \cite{13}. Pock etc. \cite{19} developed a tight convex relaxation framework for Potts model. Yuan etc \cite{130,131} have designed a max-flow approach to the Potts model.

\subsection{The convex variant of the MS model}
Based on the observation about binary images: a binary image can be recovered quite well from its smoothed
version by thresholding with a proper threshold, Cai et al \cite{26} recently proposed to use two stages for the image segmentation problem. In their scheme, the first stage is to find a smooth image $g$ that can facilitate the segmentation, and then the second stage is to threshold $g$ to reveal different segmentation features by using one of chosen thresholds based on the K-means clustering method.  Specially, this course can be formulated as
\begin{itemize}
\item[$\bullet$]The first step is to compute $g$ by
\begin{eqnarray}\label{27}
\min_g\|f-\mathcal{A}g\|^2_{L^2(\Omega)}+\lambda\int_{\Omega}|\nabla g|^2\mathrm{d}x+\gamma\int_{\Omega}|\nabla g|\mathrm{d}x,
\end{eqnarray}
where $f$ denotes the degraded image modeled by $f=\mathcal{A}g+\eta$ and $\mathcal{A}$ is a suitable linear operator. Here $\eta$ denotes some noises.
\item[$\bullet$]The second step is to obtain the segmentation by segmenting $g$ using properly chosen thresholds, which is generated by the K-means clustering method.
\end{itemize}

Actually, they used the fact that $\int_{\Gamma}|\nabla g|^2\mathrm{d}x=0$ when $g\in W^{1,2}(\Omega)$,  then the problem (\ref{27}) is equivalent to the original MS model (\ref{21}). Note that here $W^{a,b}(\Omega)$ denotes the Sobolev space defined by $W^{a,b}(\Omega):=\Big\{u:u\in L^b(\Omega)$, $ D^{\alpha}u\in L^b (\Omega)$ $0\leq|\alpha|\leq a\Big\}$,  where $D^{\alpha}:=D_1^{\alpha_1}D_2^{\alpha_2}\cdots D_r^{\alpha_r}$  with $\displaystyle\sum_{i=1}^r\alpha_i=|a|$ and $D_j=\frac{\partial}{\partial x_j}$.
The problem (\ref{27}) is a non-smooth optimization problem, so they employed the splitting Bregman method to solve it.

\section{The segmentation model based on  constrained hybrid  total variation model}
In this section, we will introduce our proposed model following from the problem (\ref{27}) and bring out its salient merits and numerical method. In the problem (\ref{27}), there include a fitting term and two regularization terms. Its nature is the image restoration problem, so the segmentation result extremely desponds on the restored image. However, the global parameters $\lambda$ and $\gamma$ can not depict the local properties while the image owns complex structures. Furthermore, image values which represent physical quantities such as photon counts
or energies are often non-negative and also have the upper bound or saturated value, so it is a natural requirement to impose a priori constraint on the proposed model.
\subsection{The proposed model of the first step}
The total variation type models have been extensively used in image restoration such as the ROF model \cite{27} and the higher-order model \cite{1001,28,29,1002}. However, these two classes of models respectively keep image edges or only keep image smoothing regions. On the other hand, image value represents physical quantities such as photon counts or energies, which is often non-negative and also has the upper bound or saturated value due to finite number of bits being used for image representation.  This is particularly true in applications such as astronomical image \cite{200, 199}. Therefore, imposing a priori constraint is a natural requirement for the deblurring problem.  To effectively overcome these drawbacks, some hybrid models based on the total variation functional have been noticed. Following the problem (\ref{27}), we propose the following variational problem
\begin{eqnarray}\label{31}
\min_{g\in[0,\iota]}\|f-\mathcal{A}g\|^2_{L^2(\Omega)}+\lambda\int_{\Omega}(1-\omega(x))|\nabla^2g|\mathrm{d}x
+\gamma\int_{\Omega}\omega(x)|\nabla g|\mathrm{d}x,
\end{eqnarray}
where $\omega(x)\in(0,1)$ is a weighted function related to the region structures of the inputting  image. Obviously, we replace the second term in the problem (\ref{27}) by $\int_{\Omega}|\nabla^2g|\mathrm{d}x$, which actually requires $u\in  W^{2,1}(\Omega)$,  then we can obtain more efficient restored image. However, this space have not a reflexive solution of the minimization problem by the direct calculus of the problem (\ref{31}). Actually, we can extend it to a larger space as being done in \cite{35,36}. Furthermore, we also add the weighted function $\omega(x)$ with formally convex combination of $\int_{\Omega}|\nabla^2g|\mathrm{d}x$ and $\int_{\Omega}|\nabla g|\mathrm{d}x$. This paper chooses the weighed function  to be  the edge indicator function. Thus  edges can be found and retained during restoration process in the first stage of our proposed method.


\begin{definition}
$BV^\kappa{(\Omega)}$ is a
subspace of functions $u\in
 {L^{1}}(\Omega)$ such that the following quantity is satisfied:
\begin{eqnarray*}
\int_\Omega{\left|\verb"D"^{\kappa} u\right|} =\sup\left\{\int_\Omega{ u \mathrm{div}^\kappa p}\mathrm{d}x\mid
p\in C_c^\kappa\left(\Omega, R^{2\times\kappa}\right),\| p\|_{L^\infty}\leq 1 \right\}<\infty,
\end{eqnarray*}
where
\begin{displaymath}
\|p(x)\|_{\infty}=\left\|\sqrt{\displaystyle\sum_{j=1}^2\sum_{i=1}^\kappa
\left(p^{i,j}\right)^2}\right\|_{L^\infty}
\end{displaymath}
for $\kappa=1,2$. Here $C_c^\kappa(\Omega)$ denotes the space of continuously
differentiable vector-valued functions on $\Omega$ with compact support.
\end{definition}
It is obviously  that $BV(\Omega)$  and $BV^2(\Omega)$ are a Banach space respectively equipped with  norms $\|u\|_{BV(\Omega)}=\|u\|_{L^1(\Omega)}+|\verb"D"u|(\Omega)$ and $\|u\|_{BV^2(\Omega)}=\|u\|_{BV(\Omega)}+|\verb"D"^2u|(\Omega)$ \cite{35,36}.
\begin{theorem}
Let $\Omega$ be a bounded connected open subset of $R^2$ with the Lipschitz boundary. Assume that $f\in BV(\Omega)\cap BV^2(\Omega)\cap L^{\infty}(\Omega)$. Then the problem
\begin{eqnarray}\label{301}
\min_{g\in[0,\iota]}\|f-\mathcal{A}g\|^2_{L^2(\Omega)}+\lambda\int_{\Omega}(1-\omega(x))|\verb"D"^2g|
+\gamma\int_{\Omega}\omega(x)|\verb"D" g|,
\end{eqnarray}
has a minimizer $g^*\in BV(\Omega)\cap BV^2(\Omega)\cap L^{\infty}(\Omega)$.
\end{theorem}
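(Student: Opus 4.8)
The plan is to establish existence by the direct method of the calculus of variations. Write $E(g)$ for the energy functional in (\ref{301}) and denote the admissible set by $\mathcal{K}=\{g\in BV(\Omega)\cap BV^2(\Omega)\cap L^\infty(\Omega): 0\le g\le\iota \text{ a.e.}\}$. First I would observe that $E$ is nonnegative, since each of its three terms is nonnegative (the weights satisfy $\omega(x)\in(0,1)$), and that $\inf_{\mathcal{K}}E$ is finite: testing with any constant $g\equiv c\in[0,\iota]$ annihilates both regularization terms and leaves $\|f-\mathcal{A}c\|_{L^2(\Omega)}^2$, which is finite because $f\in L^\infty(\Omega)\subset L^2(\Omega)$ on the bounded domain $\Omega$. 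Thus a minimizing sequence $\{g_n\}\subset\mathcal{K}$ with $E(g_n)\to\inf_{\mathcal{K}}E$ exists.

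Next I would extract uniform bounds. The pointwise constraint gives $\|g_n\|_{L^\infty}\le\iota$, hence $\|g_n\|_{L^1(\Omega)}\le\iota|\Omega|$. From $E(g_n)\le C$ and the fact that the edge indicator is bounded strictly away from the endpoints, say $0<c_1\le\omega(x)\le c_2<1$, the two regularization terms yield $|D^2 g_n|(\Omega)\le C/(\lambda(1-c_2))$ and $|Dg_n|(\Omega)\le C/(\gamma c_1)$. Therefore $\{g_n\}$ is bounded in both $BV(\Omega)$ and $BV^2(\Omega)$. By the compact embedding $BV(\Omega)\hookrightarrow L^1(\Omega)$, valid for the Lipschitz domain $\Omega$, a subsequence (not relabeled) converges strongly in $L^1(\Omega)$, and hence pointwise almost everywhere, to a limit $g^*$. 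The a.e. convergence propagates the constraint $0\le g^*\le\iota$, so $g^*\in L^\infty(\Omega)$ with $\|g^*\|_{L^\infty}\le\iota$.

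I would then show that $g^*$ is the desired minimizer by lower semicontinuity. Using the dual (supremum) representations from the definition of $BV^\kappa(\Omega)$ in weighted form — so that for each admissible test field $p$ one has a linear functional $g\mapsto\int_\Omega g\,\mathrm{div}^\kappa p$ that is continuous under $L^1$ convergence — the weighted seminorms $g\mapsto\int_\Omega(1-\omega)|D^2 g|$ and $g\mapsto\int_\Omega\omega|Dg|$ are each a supremum of such functionals and are therefore $L^1$-lower semicontinuous. In particular, the weighted bound together with $1-\omega\ge 1-c_2>0$ forces $|D^2 g^*|(\Omega)<\infty$, confirming $g^*\in BV^2(\Omega)$ (and likewise $g^*\in BV(\Omega)$), so $g^*\in\mathcal{K}$. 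For the fidelity term, the uniform $L^\infty$ bound upgrades the $L^1$ convergence to strong $L^2$ convergence via $\|g_n-g^*\|_{L^2}^2\le 2\iota\,\|g_n-g^*\|_{L^1}\to 0$; since $\mathcal{A}$ is bounded on $L^2(\Omega)$, $\mathcal{A}g_n\to\mathcal{A}g^*$ in $L^2$ and the fidelity term is continuous, hence certainly l.s.c. Combining the three pieces gives $E(g^*)\le\liminf_n E(g_n)=\inf_{\mathcal{K}}E$, and since $g^*\in\mathcal{K}$ we conclude $E(g^*)=\inf_{\mathcal{K}}E$.

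The main obstacle I anticipate is handling the two \emph{weighted} regularizers simultaneously. Both the a priori estimates and the lower semicontinuity step hinge on controlling the weight: to turn the energy bound into genuine bounds on $|Dg_n|(\Omega)$ and $|D^2 g_n|(\Omega)$ I need $\omega$ bounded away from $0$ and $1$, and to write the weighted seminorms as suprema of $L^1$-continuous functionals I need enough regularity of $\omega$ (continuity, or at least lower semicontinuity, so that the constrained test fields $|p(x)|\le\omega(x)$ form a sufficiently rich class). Making these hypotheses on $\omega$ explicit and verifying the weighted Reshetnyak-type lower semicontinuity is where the real work lies; the remaining compactness and closure arguments are standard consequences of the $BV$–$BV^2$ embedding theory quoted before the theorem.
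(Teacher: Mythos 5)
Your direct-method proof is correct under the hypotheses you flag, and it is considerably more complete than the argument the paper actually gives. The paper's own proof is essentially a citation: it absorbs the box constraint into the objective via the indicator function $\delta_{\mathcal{D}}$ (rewriting (\ref{301}) as (\ref{3110})), asserts that the resulting functional is strictly convex, and then appeals to \cite{35}, saying ``the similar method'' yields the result; the existence proof in that reference is itself a direct-method argument in $BV$-type spaces, so the underlying route coincides with yours, and your proposal is best read as supplying the compactness, weighted lower semicontinuity, and constraint-closure details that the citation conceals. Two remarks on where your version is genuinely stronger. First, the paper's stated justification --- strict convexity --- does not prove existence: strict convexity gives uniqueness of a minimizer, while existence requires exactly the coercivity/compactness and lower semicontinuity you establish; moreover, strict convexity of the fidelity term would itself require injectivity of $\mathcal{A}$, an assumption the paper only introduces later, in Theorem \ref{th33}. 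Second, your observation that the pointwise condition $\omega(x)\in(0,1)$ must be strengthened to uniform bounds $0<c_1\le\omega(x)\le c_2<1$ (together with enough regularity of $\omega$ for the weighted Reshetnyak-type lower semicontinuity) points to a gap in the paper's formulation rather than in your argument: with the paper's own choice $\omega=\frac{1}{1+\varsigma\|\nabla f_{\sigma}\|_2^2}$, the weight $1-\omega$ vanishes wherever $\nabla f_{\sigma}=0$, and on that set the energy bound gives no control of $|D^2 g_n|$, so the claimed $BV^2(\Omega)$ membership of the minimizer really does need a hypothesis of the kind you state.
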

\begin{proof}
By introducing   an indicator function
\begin{eqnarray}\label{32}
\delta_{\mathcal{D}}(g)=
\left\{
\begin{array}{ll}
0,\hspace{8pt}\mathrm{if}\hspace{3pt}g\in\mathcal{D},\\
\infty,\hspace{3pt}\mathrm{if}\hspace{3pt}g\not\in\mathcal{D}
\end{array}
\right.
\end{eqnarray}
with $\mathcal{D}:=[0,\iota]$, then we can write the problem (\ref{301}) as
\begin{eqnarray}\label{3110}
\min_g\|f-\mathcal{A}g\|^2_{L^2(\Omega)}+\lambda\int_{\Omega}(1-\omega(x))|\verb"D"^2g|
+\gamma\int_{\Omega}\omega(x)|\verb"D" g|+\delta_{\mathcal{D}}(g).
\end{eqnarray}
Then the problem (\ref{3110}) is the strictly convexity. So based on the work in \cite{35}, we can use the similar method to obtain above results.
\end{proof}

\subsection{The split Bregman method to solve the problem (\ref{31})}
In this subsection we work with the discretized version of the problem (\ref{36}) and  assume a periodic boundary condition  for $g$. By choosing the periodic boundary condition, the action of each of discrete differential operators can be regarded as a circular convolution of the image $g$ and allows the use of fast Fourier transform (FFT).
For a vector field $\bm{\nu}:R^{s\times t}\rightarrow R^\tau$, we respectively define the discretization norm $|\cdot|_1$ and $\|\cdot\|_2$ corresponding to the  norms $\ell^1$ and $\ell^2$ with $\bm{\nu}=(\nu^1,\nu^2,\cdots,\nu^\tau)$ as follows
\begin{eqnarray*}
|\bm{\nu}|_1&=&\sum_{i=1}^s\left(\sum_{j=1}^t(\nu_{i,j})^2\right)^{\frac{1}{2}}\hspace{3pt}\mbox{and}\hspace{3pt}
\|\bm{\nu}\|_2=\left(\sum_{i=1}^s\sum_{j=1}^t\left(\nu_{i,j}\right)^2\right)^{\frac{1}{2}}.
\end{eqnarray*}
In the following,  we define the discrete gradient $\nabla u_{i,j}=(D^+_xu_{i,j},D^+_yu_{i,j})$ as a forward difference operator
\begin{eqnarray*}
D^+_xu_{i,j}=\left\{
\begin{array}{ll}
u_{i+1,j}-u_{i,j}\hspace{3pt}&\mbox{if}\hspace{3pt}1\leq i< m, 1\leq j\leq n\\
u_{1,j}-u_{i,j}\hspace{3pt}&\mbox{if}\hspace{3pt}i=m, 1\leq j\leq n,
\end{array}
\right.\\
D^+_yu_{i,j}=\left\{
\begin{array}{ll}
u_{i,j+1}-u_{i,j}\hspace{3pt}&\mbox{if}\hspace{3pt}1\leq i\leq  m, 1\leq j<n\\
u_{i,1}-u_{i,j}\hspace{3pt}&\mbox{if}\hspace{3pt}1\leq i\leq  m, j=n.
\end{array}
\right.
\end{eqnarray*}
Similarly, we can also define the backward difference operator as
\begin{eqnarray*}
D^-_xu_{i,j}=\left\{
\begin{array}{ll}
u_{i,j}-u_{i-1,j}\hspace{3pt}&\mbox{if}\hspace{3pt}1<i\leq m, 1\leq j\leq n\\
u_{i,j}-u_{m,j}\hspace{3pt}&\mbox{if}\hspace{3pt}i=1, 1\leq j\leq n,
\end{array}
\right.\\
D^-_yu_{i,j}=\left\{
\begin{array}{ll}
u_{i,j}-u_{i,j-1}\hspace{3pt}&\mbox{if}\hspace{3pt}1\leq i\leq  m, 1<j\leq n\\
u_{i,j}-u_{i,n}\hspace{3pt}&\mbox{if}\hspace{3pt}1\leq i\leq  m, j=1.
\end{array}
\right.
\end{eqnarray*}
Then we can obtain
\begin{eqnarray*} \nabla^2u_{i,j}=\left(D^-_x(D^+_xu_{i,j}),D^-_x(D^+_yu_{i,j}),D^+_y(D^-_xu_{i,j}),D^+_y(D^-_yu_{i,j})\right).
\end{eqnarray*}
Using the divergence theorem
\begin{eqnarray*}
-\mathrm{div}\bm{\rho}\cdot u=\bm{\rho}\cdot\nabla u\hspace{5pt}\mbox{and}\hspace{5pt}\mathrm{div}^2\textbf{p}\cdot u=\textbf{p}\cdot\nabla^2u
\end{eqnarray*}
for $\forall u\in R^{m\times n}$, $\bm{\rho}\in  R^{m\times n}\times R^{m\times n}$, $\textbf{p}\in R^{m\times n} \times R^{m\times n} \times R^{m\times n} \times R^{m\times n}$, where
$\mathrm{div}$ and $\mathrm{div}^2$ respectively denote the adjoint operator of $\nabla$ and $\nabla^2$, then we have
\begin{eqnarray*}
\mathrm{div}\bm{\rho}_{i,j}&=&D^-_x\bm{\rho}_{i,j}+D^-_y\bm{\rho}_{i,j},\\
\mathrm{div}^2\textbf{p}_{i,j}&=&D^-_x(D^+_x\textbf{p}_{i,j})+D^-_x(D^+_y\textbf{p}_{i,j})
+D^+_x(D^-_x\textbf{p}_{i,j})+D^-_y(D^+_y\textbf{p}_{i,j}).
\end{eqnarray*}

In order to discretize  (\ref{31}),   by using the indicator function (\ref{32}) and the corresponding discrete operators and norms,  we can obtain the discretization form as
\begin{eqnarray}\label{33}
\min_{g}\|f-\mathcal{A}g\|^2_2+\lambda(1-\omega(x))|\nabla^2g|_1
+\gamma\omega(x)|\nabla g|_1+\delta_{\mathcal{D}}(g).
\end{eqnarray}
The problem (\ref{33}) is a nonsmoothing optimization problem. Variable splitting methods such as the  alternating direction method of multipliers  (ADMM) \cite{30,31,32} and the operator splitting methods \cite{33,34} have been recently used in solving this class of problems. The key of this class of methods is to transform the original problem into some subproblems so that we can easily solve these subproblems by  some traditionally numerical methods. By introducing some auxiliary variables $\textbf{q}=\nabla^2g$, $\textbf{v}=\nabla g$  and $z=g$, we convert the problem (\ref{33}) into the following form
\begin{eqnarray}\label{34}
\left\{
\begin{array}{ll}
\displaystyle\min_{g,\textbf{q},\textbf{v},z}\|f-\mathcal{A}g\|^2_2+\lambda(1-\omega(x))|\textbf{q}|_1+
\gamma\omega(x)|\textbf{v}|_1+\delta_{\mathcal{D}}(z),
\\\mathrm{s.t.}\left\{
\begin{array}{ll}
\textbf{q}=\nabla^2g,\\
\textbf{v}=\nabla g,\\
z=g.
\end{array}
\right.
\end{array}
\right.
\end{eqnarray}
Then the problem (\ref{34}) can be solved under the framework of the alternating split Bregman  method as
\begin{numcases}{\label{35}}
(g^{k+1},\textbf{q}^{k+1},\textbf{v}^{k+1},z^{k+1}):=\displaystyle\mathop{\mathrm{argmin}}_{g,\textbf{q},\textbf{v},z}
\left\|f-\mathcal{A}g\right\|^2_2+\lambda(1-\omega(x))|\textbf{q}|_1+\gamma\omega(x)|\textbf{v}|_1\nonumber\\
\hspace{10pt}+\delta_{\mathcal{D}}(z)+\frac{\mu_1}{2}\left\|\textbf{b}^k+\nabla^2g-
\textbf{q}\right\|^2_2+\frac{\mu_2}{2}\left\|\textbf{c}^k+\nabla g-\textbf{v}\right\|^2_2+\frac{\mu_3}{2}\left\|d^k+g-z\right\|^2_2,\label{35a}\\
\hspace{18pt}\textbf{b}^{k+1}:=\textbf{b}^k+\nabla^2g^{k+1}-\textbf{q}^{k+1},\\
\hspace{18pt}\textbf{c}^{k+1}:=\textbf{c}^k+\nabla g^{k+1}-\textbf{v}^{k+1},\\
\hspace{18pt}d^{k+1}:=d^k+g^{k+1}-z^{k+1}.
\end{numcases}
The minimization problem (\ref{35a}) yielding $(g^{k+1},\textbf{q}^{k+1},$$\textbf{v}^{k+1},z^{k+1})$ is not trivial since it includes two nonseparable terms and two nonsmooth terms. A natural approach is to alternate belong minimizing with respect to $g, \textbf{q}, \textbf{v}, z$ while keeping others fixed. Then we have the following strategy to solve the problem (\ref{33})
\begin{numcases}{\label{36}}
g^{k+1}:=\displaystyle\mathop{\mathrm{argmin}}_{g}\left\|f-\mathcal{A}g\right\|^2_2
+\frac{\mu_1}{2}\left\|\textbf{b}^k+\nabla^2g-\textbf{q}^k\right\|^2_2\nonumber\\
\hspace{60pt}+\frac{\mu_2}{2}\left\|\textbf{c}^k+\nabla g-\textbf{v}^k\right\|^2_2+\frac{\mu_3}{2}\left\|d^k+ g-z^k\right\|^2_2,\label{36a}\\
\textbf{q}^{k+1}:=\displaystyle\mathop{\mathrm{argmin}}_{\textbf{q}}\lambda(1
-\omega(x))|\textbf{q}|_1+\frac{\mu_1}{2}\left\|\textbf{b}^k+
\nabla^2g^{k+1}-\textbf{q}\right\|^2_2,\label{36b}\\
\textbf{v}^{k+1}:=\displaystyle\mathop{\mathrm{argmin}}_{\textbf{v}}
\gamma\omega(x)|\textbf{v}|_1+\frac{\mu_2}{2}\left\|\textbf{c}^k+\nabla g^{k+1}-\textbf{v}\right\|^2_2,\label{36c}\\
z^{k+1}:=\displaystyle\mathop{\mathrm{argmin}}_{z}\delta_{\mathcal{D}}(z)+\frac{\mu_3}{2}
\left\|d^k+g^{k+1}-z\right\|^2_2,\label{36d}\\
\textbf{b}^{k+1}:=\textbf{b}^k+\nabla^2g^{k+1}-\textbf{q}^{k+1},\label{36e}\\
\textbf{c}^{k+1}:=\textbf{c}^k+\nabla g^{k+1}-\textbf{v}^{k+1},\label{36f}\\
d^{k+1}:=d^k+g^{k+1}-z^{k+1}.\label{36g}
\end{numcases}
For the iteration scheme (\ref{36}), we can derive the following convergence result. Though the proof is similar to the course used in \cite{43} and there includes three constrained conditions in solved problem (\ref{35}), the data fitting term in problem (\ref{34}) can make the proof more simple.
\begin{theorem}\label{th33}
Assume that   $\mathrm{Ker}(\mathcal{A})=\{0\}$
and the point $\left(g^*,\textbf{q}^*,\textbf{v}^*,z^*\right)$ is the solution of the problem (\ref{35}). Then the sequence $(g^k,\textbf{q}^k,\textbf{v}^k,z^k)$ generated by the scheme (\ref{36}) converges to the solution of the problem (\ref{34}) for every choosing original point $\left(g^0,\textbf{q}^0,\textbf{v}^0,z^0\right)$.
\end{theorem}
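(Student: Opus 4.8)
The plan is to recognize the scheme (\ref{36}) as a two–block alternating direction method of multipliers (equivalently, the augmented–Lagrangian form of the split Bregman iteration) and then to run the standard ADMM convergence argument, exploiting the strict convexity furnished by $\mathrm{Ker}(\mathcal{A})=\{0\}$. First I would recast (\ref{34}) in canonical constrained form: collect the auxiliary variables into a single block $\textbf{w}:=(\textbf{q},\textbf{v},z)$ and stack the linear operators into $\mathcal{L}g:=(\nabla^2 g,\nabla g,g)$, so that the three constraints read $\mathcal{L}g=\textbf{w}$. With $F(g):=\|f-\mathcal{A}g\|_2^2$ and $G(\textbf{w}):=\lambda(1-\omega)|\textbf{q}|_1+\gamma\omega|\textbf{v}|_1+\delta_{\mathcal{D}}(z)$, problem (\ref{34}) becomes $\min_{g,\textbf{w}}F(g)+G(\textbf{w})$ subject to $\mathcal{L}g=\textbf{w}$, and (\ref{36}) is exactly ADMM for this splitting: (\ref{36a}) is the $g$–update, (\ref{36b})--(\ref{36d}) form the separable $\textbf{w}$–update, and (\ref{36e})--(\ref{36g}) are the scaled multiplier updates with $\textbf{b},\textbf{c},d$ playing the role of the dual variables. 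Since $F$ and $G$ are proper, closed and convex, we are in the regime where two–block ADMM is known to converge; the role of $\mathrm{Ker}(\mathcal{A})=\{0\}$ is to make $F$ strictly (indeed strongly, on the finite grid) convex in $g$, which both guarantees a unique $g^*$ and later upgrades subsequential to full primal convergence.

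Next I would record the saddle–point/KKT conditions. By hypothesis, (\ref{35}) admits a solution $(g^*,\textbf{q}^*,\textbf{v}^*,z^*)$; passing to the associated optimal multipliers $(\textbf{b}^*,\textbf{c}^*,d^*)$ yields the optimality system consisting of primal feasibility $\mathcal{L}g^*=\textbf{w}^*$, a stationarity relation in $g$ coupling $2\mathcal{A}^{*}(\mathcal{A}g^*-f)$ with the adjoints $\mathrm{div}^2,\nabla^{*},I$ applied to the optimal multipliers, and a stationarity relation in $\textbf{w}$ asserting membership of the multipliers in the subdifferentials of $|\cdot|_1$ and in the normal cone of $\mathcal{D}$. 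I would likewise write the exact optimality conditions satisfied by $g^{k+1}$ from (\ref{36a}) and by $\textbf{q}^{k+1},\textbf{v}^{k+1},z^{k+1}$ from (\ref{36b})--(\ref{36d}), and then use the multiplier identities (\ref{36e})--(\ref{36g}) to rewrite each subgradient inclusion in terms of the \emph{updated} dual variables.

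The core of the argument—and the step I expect to be the main obstacle—is the monotone Lyapunov estimate. Set
\[
\Phi^k:=\mu_1\|\textbf{b}^k-\textbf{b}^*\|_2^2+\mu_2\|\textbf{c}^k-\textbf{c}^*\|_2^2+\mu_3\|d^k-d^*\|_2^2
+\mu_1\|\textbf{q}^k-\textbf{q}^*\|_2^2+\mu_2\|\textbf{v}^k-\textbf{v}^*\|_2^2+\mu_3\|z^k-z^*\|_2^2 .
\]
Combining the two subgradient inclusions for $\textbf{w}^{k+1}$ and $\textbf{w}^*$ with monotonicity of $\partial G$, the stationarity relations for $g^{k+1}$ and $g^*$, and the update identities, I would derive a descent inequality $\Phi^{k+1}\le\Phi^k-R^k$ in which $R^k\ge 0$ is a sum of squared primal residuals $\|\nabla^2 g^{k+1}-\textbf{q}^{k+1}\|_2^2,\|\nabla g^{k+1}-\textbf{v}^{k+1}\|_2^2,\|g^{k+1}-z^{k+1}\|_2^2$ and squared consecutive differences of the auxiliary block. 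This is where the three coupled constraints and the two nonsmooth terms make the bookkeeping delicate; as the authors remark, however, the presence of the smooth, strictly convex fidelity $F$ streamlines the cross–term estimates. Telescoping gives $\sum_k R^k\le\Phi^0<\infty$, hence $R^k\to 0$, so all feasibility residuals and all successive differences vanish in the limit.

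Finally I would conclude primal convergence. Feeding the vanishing residuals into the stationarity condition for $g^{k+1}$ and invoking strong convexity of $F$ (guaranteed precisely by $\mathrm{Ker}(\mathcal{A})=\{0\}$) forces $g^k\to g^*$. The multiplier identity (\ref{36e}) rearranges to $\textbf{q}^{k}=\nabla^2 g^{k}+(\textbf{b}^{k-1}-\textbf{b}^{k})$, and with $\textbf{b}^{k-1}-\textbf{b}^{k}\to 0$ together with $\nabla^2 g^{k}\to\nabla^2 g^{*}=\textbf{q}^{*}$ we obtain $\textbf{q}^k\to\textbf{q}^*$; the analogous identities from (\ref{36f})--(\ref{36g}) give $\textbf{v}^k\to\textbf{v}^*$ and $z^k\to z^*$. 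Since nowhere did the argument use the starting point, the convergence holds for every initialization $(g^0,\textbf{q}^0,\textbf{v}^0,z^0)$, which is the assertion.
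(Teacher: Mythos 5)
Your proposal follows essentially the same route as the paper's proof: the paper likewise writes the first-order optimality systems for the iterates and for the exact solution, subtracts them, invokes monotonicity of the subdifferentials, and telescopes exactly your Lyapunov quantity (scaled dual errors plus auxiliary-variable errors) to conclude that $\|\mathcal{A}(g^{k}-g^{*})\|_2^2$ and the three primal residuals vanish, after which $\mathrm{Ker}(\mathcal{A})=\{0\}$ yields $g^k\to g^*$ and the convergence of $\textbf{q}^k,\textbf{v}^k,z^k$ follows from the residual identities. The only difference is cosmetic: you package $(\textbf{q},\textbf{v},z)$ into a single ADMM block with $\mathcal{L}g=(\nabla^2 g,\nabla g, g)$, whereas the paper carries the three constraints separately.
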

\begin{proof}
The sequence $(g^{k+1},\textbf{q}^{k+1},\textbf{v}^{k+1}, z^{k+1},\textbf{b}^{k+1},\textbf{b}^{k+1},d^{k+1})$ generated by the subproblems (\ref{36a})-(\ref{36g}) satisfies the following the first order optimization condition as
\begin{numcases}{\label{14}}
0=\mathcal{A}^\star(\mathcal{A}g^{k+1}-f)+\mu_1\mathrm{div}^2(\nabla^2g^{k+1}+\textbf{b}^k-\textbf{q}^k)\nonumber\\
\hspace{120pt}-\mu_2\mathrm{div}(\nabla g^{k+1}+\textbf{c}^k-\textbf{v}^k)+\mu_3(g^{k+1}+d^k-z^k),\\
0=\lambda(1-\omega(x))\textbf{r}^{k+1}+{\mu_1}(\textbf{q}^{k+1}-\textbf{b}^k-\nabla^2g^{k+1}), \\
0=\gamma\omega(x)\textbf{s}^{k+1}+\mu_2(\textbf{v}^{k+1}-\textbf{c}^{k+1}-\nabla g^{k+1}),\\
0={\varpi}^{k+1}+\mu_3(z^{k+1}-d^k-g^{k+1}),\label{14d}\\
\textbf{b}^{k+1}=\textbf{b}^k+\nabla^2g^{k+1}-\textbf{q}^{k+1},\label{14e}\\
\textbf{c}^{k+1}=\textbf{c}^k+\nabla g^{k+1}-\textbf{v}^{k+1},\label{14f}\\
d^{k+1}=d^k+g^{k+1}-z^{k+1},\label{14g}
\end{numcases}
where $\mathcal{A}^\star$, $\mathrm{div}^2$ and $\mathrm{div}$ denote the adjoint operator of operators $\mathcal{A}$, $\nabla^2$ and $\nabla$,  $\textbf{r}^{k+1}\in\partial (|\textbf{q}^{k+1}|)$, $\textbf{s}^{k+1}\in\partial(|\textbf{v}^{k+1}|)$ and $\varpi^{k+1}\in\partial(\delta_{\mathcal{D}}(z^{k+1}))$. Here $\partial\mathcal{F}(x)$ denotes the subdifferential set at $x$.  By the assumption that the point $(g^*,q^*,v^*,z^*)$ is the solution of the problem (\ref{35}), we can obtain
\begin{numcases}{\label{15}}
0=\mathcal{A}^\star(\mathcal{A}g^*-f)+\mu_1\mathrm{div}^2(\nabla^2g^*+\textbf{b}^*-\textbf{q}^*)\nonumber\\
\hspace{120pt}-\mu_2\mathrm{div}(\nabla g^*+\textbf{c}^*-\textbf{v}^*)+\mu_3(g^*+d^*-z^*),\\
0=\lambda(1-\omega(x))\textbf{r}^*+{\mu_1}(\textbf{q}^*-\textbf{b}^*-\nabla^2g^*), \\
0=\gamma\omega(x)\textbf{s}^*+\mu_2(\textbf{v}^*-\textbf{c}^*-\nabla g^*),\\
0={\varpi}^*+\mu_3(z^*-d^k-g^*),\\
\textbf{b}^*=\textbf{b}^*+\nabla^2g^*-\textbf{q}^*,\\
\textbf{c}^*=\textbf{c}^*+\nabla g^*-\textbf{v}^*,\\
d^*=d^*+g^*-z^*,
\end{numcases}
where $\textbf{r}^*\in\partial (|\textbf{q}^*|)$, $\textbf{s}^*\in\partial(|\textbf{v}^*|)$ and $\varpi^*\in\partial(\delta_{\mathcal{D}}(z^*))$. Set $g_e^k=g^k-g^*$, $\textbf{q}^k_e=\textbf{q}^k-\textbf{q}^*$, $\textbf{v}_e^k=\textbf{v}^k-\textbf{v}^*$, $\textbf{r}_e^k=\textbf{r}^k-\textbf{r}^*$,
$\textbf{s}_e^k=\textbf{s}^k-\textbf{s}^*$, $\varpi_e^k=\varpi^k-\varpi^*$,
$\textbf{b}_e^k=\textbf{b}^k-\textbf{b}^*$, $\textbf{c}_e^k=\textbf{c}^k-\textbf{c}^*$, $d_e^k=d^k-d^*$, subtracting (\ref{15}) from (\ref{14}) and then respectively take  the inter product $\langle\cdot,\cdot\rangle$ of the left- and right-hand sides by $g_e^{k+1}$, $\textbf{q}_e^{k+1}$  $\textbf{v}_e^{k+1}$, $\textbf{z}_e^{k+1}$, $\textbf{b}_e^k$, $\textbf{c}_e^k$, $d_e^k$, we can obtain that
\begin{numcases}{\label{16}}
0=\|\mathcal{A}g_e^{k+1}\|_2^2+\mu_1\|\nabla^2g_e^{k+1}\|_2^2+\mu_1\langle \textbf{b}_e^k-\textbf{q}_e^k,\nabla^2g_e^{k+1}\rangle+\mu_2\|\nabla g_e^{k+1}\|_2^2\nonumber\\\hspace{80pt}+\mu_2\langle \textbf{c}_e^k-\textbf{v}_e^k,\nabla g_e^{k+1}\rangle+\mu_3\|g_e^{k+1}\|_2^2+\mu_3\langle d_e^k-z_e^k,g_e^{k+1}\rangle, \label{16a}\\
0=\lambda\langle(1-\omega(x))\textbf{r}^{k+1}_e,\textbf{q}_e^{k+1}\rangle
+\mu_1\|\textbf{q}_e^{k+1}\|_2^2-\mu_1\langle\textbf{b}^k_e+\nabla^2g^{k+1}_e,\textbf{q}_e^{k+1}),\label{16b}\\
0=\gamma\langle\omega(x)\textbf{s}^{k+1}_e,\textbf{v}_e^{k+1}\rangle+\mu_2\|\textbf{v}_e^{k+1}\|-\mu_2\langle\textbf{c}^k_e+
\nabla g_e^{k+1},\textbf{v}_e^{k+1}),\label{16c}\\
0=\langle\varpi^{k+1}_e,z_e^{k+1}\rangle+\mu_3\|z^{k+1}_e\|_2^2-\mu_3\langle d_e^k+g^{k+1}_e,z^{k+1}_e\rangle,\label{16d}\\
\langle\textbf{b}_e^{k+1},\textbf{b}_e^k\rangle=\|\textbf{b}^k_e\|_2^2+\langle\nabla^2g^{k+1}_e-\textbf{q}^{k+1}_e,
\textbf{b}_e^k\rangle,\label{16e}\\
\langle\textbf{c}_e^{k+1},\textbf{c}_e^k\rangle=\|\textbf{c}^k_e\|^2_2+\langle\nabla g^{k+1}_e-\textbf{v}^{k+1}_e,\textbf{c}_e^k\rangle,\label{16f}\\
\langle d_e^{k+1},d_e^k\rangle=\|d^k_e\|_2^2+\langle g^{k+1}_e-z^{k+1}_e,d_e^k\rangle.\label{16g}
\end{numcases}
Since (\ref{16e})-(\ref{16g}) can be written as
\begin{numcases}{\label{17}}
\frac{\mu_1}{2}(\|\textbf{b}_e^k\|_2^2-\|\textbf{b}_e^{k+1}\|_2^2)=\mu_1\langle\textbf{b}_e^k,
\textbf{q}_e^{k+1}-\nabla^2g_e^{k+1}\rangle-\frac{\mu_1}{2}\|\nabla^2g_e^{k+1}-\textbf{q}_e^{k+1}\|_2^2\label{17a}\\
\frac{\mu_2}{2}(\|\textbf{c}_e^k\|_2^2-\|\textbf{c}_e^{k+1}\|_2^2)=\mu_2\langle\textbf{c}_e^k,\textbf{v}_e^{k+1}
-\nabla g_e^{k+1}\rangle-\frac{\mu_2}{2}\|\nabla g_e^{k+1}-\textbf{v}_e^{k+1}\|_2^2\label{17b}\\
\frac{\mu_3}{2}(\|d_e^k\|_2^2-\|d_e^{k+1}\|_2^2)=\mu_3\langle d_e^k, z_e^{k+1}-g_e^{k+1}
\rangle-\frac{\mu_3}{2}\|g_e^{k+1}-z_e^{k+1}\|_2^2\label{17c}
\end{numcases}
Summing (\ref{16a})-(\ref{16d}) with (\ref{17a})-(\ref{17c}), we can obtain
\begin{eqnarray*}
&&\frac{\mu_1}{2}(\|\textbf{b}_e^k\|_2^2-\|\textbf{b}_e^{k+1}\|_2^2)+
\frac{\mu_2}{2}(\|\textbf{c}_e^k\|_2^2-\|\textbf{c}_e^{k+1}\|_2^2)+
\frac{\mu_3}{2}(\|d_e^k\|_2^2-\|d_e^{k+1}\|_2^2)\nonumber\\
&&=\|\mathcal{A}g_e^{k+1}\|_2^2+\mu_1\|\nabla^2 g_e^{k+1}\|_2^2+\mu_2\|\nabla g_e^{k+1}\|_2^2+\mu_3\|g_e^{k+1}\|_2^2+\mu_1\|\textbf{q}_e^{k+1}\|_2^2\nonumber\\
&&+\mu_2\|\textbf{v}_e^{k+1}\|_2^2
+\mu_3\|z_e^{k+1}\|_2^2+\lambda\langle(1-\omega(x))\textbf{r}^{k+1}_e,\textbf{q}_e^{k+1}\rangle+
\gamma\langle\omega(x)\textbf{s}^{k+1}_e,\textbf{v}_e^{k+1}\rangle\nonumber\\
&&+\langle\varpi_e^{k+1},z_e^{k+1}\rangle-\mu_1\langle\nabla^2g_e^{k+1},\textbf{q}_e^k+\textbf{q}_e^{k+1}\rangle
-\mu_2\langle\nabla g_e^{k+1},\textbf{v}_e^k+\textbf{v}_e^{k+1}\rangle\nonumber
\\&&-\mu_3\langle g_e^{k+1},z_e^k+z_e^{k+1}\rangle-\frac{\mu_1}{2}\|\nabla^2g_e^{k+1}-\textbf{q}_e^{k+1}\|_2^2-\frac{\mu_2}{2}\|\nabla g_e^{k+1}-\textbf{v}_e^{k+1}\|_2^2\nonumber\\
&&-\frac{\mu_3}{2}\|g_e^{k+1}-z_e^{k+1}\|_2^2.
\end{eqnarray*}
By summing the above equation from $k=0$ to $k=\mathcal{K}$, we get
\begin{eqnarray*}
&&\frac{\mu_1}{2}(\|\textbf{b}_e^0\|_2^2-\|\textbf{b}_e^{\mathcal{K}+1}\|_2^2)
+\frac{\mu_2}{2}(\|\textbf{c}_e^0\|_2^2-\|\textbf{c}_e^{\mathcal{K}+1}\|_2^2)
+\frac{\mu_3}{2}(\|d_e^0\|_2^2-\|d_e^{\mathcal{K}+1}\|_2^2)\nonumber\\
&&=\sum_{k=0}^{\mathcal{K}}\Big[\|\mathcal{A}g_e^{k+1}\|_2^2+\lambda\langle(1-\omega(x))\textbf{r}^{k+1}_e,
\textbf{q}_e^{k+1}\rangle+
\gamma\langle\omega(x)\textbf{s}^{k+1}_e,\textbf{v}_e^{k+1}\rangle\nonumber
\\&&+\langle\varpi_e^{k+1},z_e^{k+1}\rangle+\frac{\mu_1}{2}\|\nabla^2g_e^{k+1}-\textbf{q}_e^{k+1}\|_2^2
+\frac{\mu_2}{2}\|\nabla g_e^{k+1}-\textbf{v}_e^{k+1}\|_2^2\nonumber\\
&&+\frac{\mu_3}{2}\|g_e^{k+1}-z_e^{k+1}\|_2^2\Big]+\frac{\mu_1}{2}\|\textbf{q}_e^{\mathcal{K}+1}\|_2^2
-\frac{\mu_1}{2}\|\textbf{q}_e^0\|_2^2+\frac{\mu_2}{2}\|\textbf{v}_e^{\mathcal{K}+1}\|_2^2
\nonumber\\&&-\frac{\mu_2}{2}\|\textbf{v}_e^0\|_2^2+\frac{\mu_3}{2}\|z_e^{\mathcal{K}+1}\|_2^2
-\frac{\mu_3}{2}\|z_e^0\|_2^2.
\end{eqnarray*}
Using the fact that the monotonicity of the subdifferential
\begin{eqnarray*}
\left\{
\begin{array}{ll}
\zeta_x\in\partial f(x)\Longleftrightarrow f(y)-f(x)\geq\langle\zeta_x,y-x\rangle, \hspace{5pt}x\in R^n\hspace{3pt}\mbox{for}\hspace{3pt}\forall\hspace{3pt}y\in R^n,\\
\zeta_y\in\partial f(y)\Longleftrightarrow f(x)-f(y)\geq\langle\zeta_y,x-y\rangle, \hspace{5pt}x\in R^n\hspace{3pt}\mbox{for}\hspace{3pt}\forall\hspace{3pt}y\in R^n,
\end{array}
\right.
\end{eqnarray*}
implies
\begin{eqnarray*}
\langle\zeta_x-\zeta_y,x-y\rangle\geq0,
\end{eqnarray*}
we get
\begin{eqnarray*}
&&\frac{\mu_1}{2}\|\textbf{b}_e^0\|_2^2+\frac{\mu_2}{2}\|\textbf{c}_e^0\|_2^2+\frac{\mu_3}{2}\|d_e^0\|_2^2+
\frac{\mu_1}{2}\|\textbf{q}_e^0\|_2^2+\frac{\mu_2}{2}\|\textbf{v}_e^0\|_2^2+\frac{\mu_3}{2}\|z_e^0\|_2^2\nonumber\\
&&\geq\sum_{k=0}^{\mathcal{K}}\Big[\|\mathcal{A}g_e^{k+1}\|_2^2+\frac{\mu_1}{2}\|\nabla^2g_e^{k+1}-\textbf{q}_e^{k+1}
\|_2^2+\frac{\mu_2}{2}\|\nabla g_e^{k+1}-\textbf{v}_e^{k+1}\|_2^2\nonumber\\&&+\frac{\mu_3}{2}\|g_e^{k+1}-z_e^{k+1}\|_2^2\Big].
\end{eqnarray*}
This leads to the following conclusions:
\begin{numcases}{\label{18}}
\lim_{k\rightarrow\infty}\|\mathcal{A}g_e^k\|_2^2=0, \label{18a}\\
\lim_{k\rightarrow\infty}\frac{\mu_1}{2}\|\nabla^2g_e^k-\textbf{q}_e^k\|_2^2=0,\label{18b}\\
\lim_{k\rightarrow\infty}\frac{\mu_1}{2}\|\nabla g_e^k-\textbf{v}_e^k\|_2^2=0,\label{18c}\\
\lim_{k\rightarrow\infty}\frac{\mu_1}{2}\| g_e^k-z_e^k\|_2^2=0.\label{18d}
\end{numcases}
By the assumption of $Ker(\mathcal{A})=\{0\}$,  the formula (\ref{18a}) implies
$\displaystyle\lim_{k\rightarrow\infty}g^k=g^*$. Then we also orderly obtain  $\displaystyle\lim_{k\rightarrow\infty}\textbf{q}^k=\textbf{q}^*$, $\displaystyle\lim_{k\rightarrow\infty}
\textbf{v}^k=\textbf{v}^*$, $\displaystyle\lim_{k\rightarrow\infty}z^k=z^*$.
\end{proof}

\begin{remark}
Since we used the periodic boundary condition for the differential operators  $\nabla$ and $\nabla^2$, it is easy to deduce that they are injective operators. So the assumption $Ker(\mathcal{A})=\{0\}$ in Theorem \ref{th33}	implies that $\mathrm{Ker}(\mathcal{A})=\{0\}\cap \mathrm{Ker}(\nabla)\cap \mathrm{Ker}\left(\nabla^2\right)={0}$. Then the objective function in the problem (\ref{33}) is  strong convexity, therefore the solution is unique. Furthermore, it also means that non-zeros constant images are not included in the null space of the operator $\mathcal{A}$, which is true for the image deconvolution problem in our numerical implementations.
\end{remark}
\begin{remark}
For the dual variables $(\textbf{b}^k,\textbf{c}^k,d^k)$ in the iteration schemes (\ref{36a})-(\ref{36g}), following from the assertions  of  $(\textbf{q}^k, \textbf{v}^k, z^k)\rightarrow (\textbf{q}^*, \textbf{v}^*, z^*)$ as $k\rightarrow \infty$ in Theorem \ref{th33} and  equations (\ref{18b})-(\ref{18d}), we obtain that $\nabla^2 g*=\textbf{q}^*$, $\nabla g*=\textbf{v}^*$ and $g^*=z^*$.  Then we can obtain  $(\textbf{b}^k,\textbf{c}^k,d^k)\rightarrow(\textbf{b}^*,\textbf{c}^*,d^*)$ by taking $k\rightarrow \infty $ in the side of equations (\ref{36e})-(\ref{36g}) or (\ref{14e})-(\ref{14g}).
\end{remark}

We now consider to solve the subproblem in the scheme (\ref{36}).
\begin{itemize}
\item[$\bullet$]\textbf{To} \textbf{solve} \textbf{the} \textbf{subproblem} \textbf{(\ref{36a})}:

The subproblem (\ref{36a}) is a smooth and convex optimization problem, so its Euler-Lagrange equation  satisfies
\begin{eqnarray*}
\left[\mathcal{A}^\star\mathcal{A}+\mu_1\mathrm{div}^2\nabla^2-\mu_2\mathrm{div}\nabla+\mu_3I\right]g^{k+1}=
\mathcal{A}^\star f
-\mu_1\mathrm{div}^2(\textbf{b}^k-\textbf{q}^k)\\\hspace{80pt}
+\mu_2\mathrm{div}(\textbf{c}^k-\textbf{v}^k)-\mu_3(d^k-z^k),
\end{eqnarray*}
where $\mathcal{A}^\star$ is the conjugate operator of $\mathcal{A}$.  For this equation, we can obtain the explicit solution by using the fast Fourier transform (FFT) while $\mathcal{A}$  has a special form such as the identity operator $I$ or the blurring operator:
\begin{eqnarray}\label{37}
g^{k+1}=\mathcal{F}^{-1}\left(\frac{M}{D}\right),
\end{eqnarray}
where $M:=\mathcal{F}(\mathcal{A}^\star)\mathcal{F}(f)
-\mu_1\mathcal{F}(\mathrm{div}^2)\mathcal{F}(\textbf{b}^k-\textbf{q}^k)+\mu_2
\mathcal{F}(\mathrm{div})\mathcal{F}(\textbf{c}^k-\textbf{v}^k)-\mu_3\mathcal{F}(d^k-z^k)$ and $D:=\mathcal{F}(\mathcal{A}^\star\mathcal{A})+\mu_1\mathcal{F}(\mathrm{div}^2\cdot\nabla^2)
-\mu_2\mathcal{F}(\mathrm{div}\cdot\nabla)+\mu_3\mathcal{F}(I)$. Here $\mathcal{F}$ denotes the fast Fourier transform and $\mathcal{F}^{-1}$ denotes the inverse of the fast Fourier transform.
\item[$\bullet$]\textbf{To} \textbf{solve} \textbf{the} \textbf{subproblem} \textbf{(\ref{36b})} \textbf{and } \textbf{(\ref{36c})}:
The subproblems (\ref{36b}) and (\ref{36c}) have the closed form solution. Since the closed form solution of minimization problem
\begin{eqnarray*}
\min_\varrho\frac{\xi}{2}\|\varrho-\varpi\|_2^2+|\varrho|_1
\end{eqnarray*}
is
\begin{eqnarray*}
\varrho=\max\left\{|\varpi|_1-\frac{1}{\xi},0\right\}\frac{\varpi}{|\varpi|_1}
\end{eqnarray*}
for $\varrho,\varpi\in R^s$ and $\xi>0$,
then the solution  of the problems (\ref{36b}) and (\ref{36c}) can be denoted as
\begin{numcases}{}
\textbf{q}^{k+1}=\max\left\{|\hbar|_1-\frac{\lambda(1-\omega(x))}{\mu_1},0\right\}
\frac{\hbar}{|\hbar|_1}\label{38a}\\
\textbf{v}^{k+1}=\max\left\{|\Im|_1-\frac{\gamma\omega(x)}{\mu_2},0\right\}
\frac{\Im}{|\Im|_1},\label{38b}
\end{numcases}
where $\hbar=\textbf{b}^k+\nabla^2g^{k+1}$ and $\Im=\textbf{c}^k+\nabla g^{k+1}$.
\item[$\bullet$]\textbf{To} \textbf{solve} \textbf{the} \textbf{subproblem} \textbf{(\ref{36d})}:
The subproblem is the projection problem on the convex set $\mathcal{D}$. So  its solution can be obtained by
\begin{eqnarray}
z^{k+1}:=\max\left\{\min\{d^k+g^{k+1},\iota\},0\right\}.\label{39}
\end{eqnarray}
\end{itemize}

\subsection{The  K-means clustering threshold}
With the development and improvement of data mining technology, data clustering algorithm is gradually applied to some fields. Among clustering algorithms, the $K$-means clustering method can be applied in many fields which include image and audio data compression, preprocess of system modeling with radial basis function networks, and task decomposition of heterogeneous neural network structure.

The clustering method of $K$-means originally proposed in \cite{41}  is a data mining algorithm which performs clustering by using an iterative approach. It takes the number or desired clusters and the initial means as input and produces final means as output. If the algorithm is required to produce $K$ clusters, then there will be $K$ initial means and $K$ final means. After termination of the $K$-means clustering, each object in dataset becomes a member of one cluster. It can provide relatively good result for convex cluster with relatively flexible and high efficient. Though this method is sensitive to the choice of starting points and can only be applied to a small dataset, in our experiment we still obtain suitable segmentation results. So we use it to obtain some suitable thresholds for the image segmentation problem.  In order to segment the restored image $g$ generated by the first step, we first linearly stretched it $\bar{g}$ in the second step by the strategy as
\begin{eqnarray*}
\bar{g}=\frac{g-g_{\min}}{g_{\max}-g_{\min}},
\end{eqnarray*}
where $g_{\min}$ and $g_{\max}$ represent minimum and maximum of $g$ respectively. Denote $\rho_1\leq \rho_2\cdots\leq\rho_K$ to be the centers of the $K$ clusters use $K$ as follows $K-1$ of the pixel intensities of the restored image. Here we define the $K-1$ thresholds to be $\bar{\rho}_i=(\rho_i+\rho_{i+1})/2$.  Denote $\rho_0=0$ and $\rho_K=1$, then the $i-$th phase of $\bar{g}$ is given by $\{x:\bar{\rho}_{i-1}\leq\bar{g}\leq\bar{\rho}_{i}\}$.

\section{Numerical implementation and experimental results}
In this section, we arrange some experimental comparisons from our proposed method with other recent methods \cite{26,18,22,130,131}. Following from the  primal-dual approach, the authors in \cite{26} extended the method developed in \cite{13} to solve  the continuous Potts model with applications to the multiphase piecewise constant Mumford-Shah model, where the numerical comparisons illustrated that their method outperforms some methods reviewed in \cite{13}.  On the other hand, the method in \cite{13}  has exhibited the strength over those popular methods such as  the $\alpha$ expansion and $\alpha_-\beta$ swap \cite{37}, the method of Pock et al. \cite{38}, and the algorithm in Lellmann et al. \cite{23}. For the strategy in \cite{18}, its numerical results showed that they outperform the methods used in \cite{13,38,40}. In experiments, the model (\ref{26}) is different to other models since we first need to know mean values in every segmentation region. To summarize, the two-stage image segmentation scheme is written as follows:
\begin{algorithm}\label{al31} Solving the two-stage image segmentation model (\ref{31}) with the clustering method:
\begin{itemize}
\item \textbf{Step 1}.  Choose original values $g^0=f$,  $\textbf{b}^0=\textbf{q}^0=\textbf{0}$ $\textbf{c}^0=\textbf{v}^0=\textbf{0}$, $d^0=0$. Set $k=0$;
\item \textbf{Step 2}. Compute  values of $\left(g^k,\textbf{q}^k,\textbf{v}^k,\textbf{b}^k,\textbf{c}^k, d^k\right)$ by the strategies (\ref{37})-(\ref{39}). If the stopping criterion is satisfied, output $g$ and do the next step;
\item \textbf{Step 3}. Apply K-means method to obtain the cluster  and then get the mean value of each cluster $\rho_1\leq\rho_2\leq\cdots\leq\rho_k$. Set the thresholds as  $\bar{\rho}_i=(\rho_i+\rho_{i+1})/2$ and then get the segmentation image.
\end{itemize}
\end{algorithm}

In the numerical implementation, all of parameters are chosen optimally with respect to the chosen dataset by trials and errors by getting the best segmentation results. The stopping criterion is of reaching the maximum iteration number $\mathcal{K}$ or of satisfying  $\min\Big\{\|\textbf{b}^{k+1}-\textbf{b}^k\|_2,\|\textbf{c}^{k+1}-\textbf{c}^k\|_2$,
$\|d^{k+1}-d^k\|_2\Big\}\leq\epsilon$, where $\epsilon$ denotes a small  positive number defined by the user. It is worth noting that it is suitable for running the inner loop of the proposed algorithm, we however set it to be one and find that we also get the pleasing numerical results.  For the original image, we normalize it into the range $[0,1]$, so we need to set $\iota=1$ for the constraint in our proposed model. For the weighed function $w(x)$, it can weaken the influence of noise as the form $\omega(x)=\frac{1}{1+\varsigma\|\nabla f_{\sigma}\|_2^2}$, where $f_{\sigma}=G_{\sigma}\ast f$. Here  $G_{\sigma}$ denotes the Gaussian kernel function with the standard deviation $\sigma$, $'\ast'$ is the convolution operation and $\varsigma$ is a scalar parameter. Furthermore, in order to simplify notations of those previous methods, we set the method used in \cite{5} to solve the CV model (\ref{22})  to be the CVM, the method used  the max flow method in \cite{130,131} to solve the Potts-type model  in (\ref{26}) as   the MFPM(max flow method for the POtts-type model), the method used  the new primal dual method in \cite{18} to solve the Potts-type model  in (\ref{26}) as the PDPM(primal dual method for the Potts-type model), the method used the two step in \cite{26} for solving the Mumford-Shah model (\ref{27}) to be the TSMSM(two step for the Mumford-Shah model).  Moreover, we set our proposed method by using the Algorithm \ref{al31} as the HTVWM. Simultaneously, we use the method similar to Algorithm \ref{al31} to solve the unstrained model (removing the term $g\in[0,\iota]$  in (\ref{31})) as the HTVUM in order to show the effectiveness of the constrained problem (\ref{31}). As we know that the K-means method is a local method so  we run each method ten times and then choose their average value.

For degraded images, we used  Matlab functions ``\textbf{imfilter}" and ``\textbf{imnoise}" to generate noise and blurring effects. Here we only use blurring kernels of Gaussian and motion. For the convenience of description, we
denote the Gaussian blur  with a blurring size $s$ and a standard deviation $\sigma_g$ as $(G,s,\sigma_g)$. Similarly, the motion blur  with a motion length $\imath$ and an angle $\theta$ is denoted as $(M,\imath,\theta)$. To more fairly tune parameters, we save the corrupted image as the ``\textbf{mat}" format and then load it in numerical experiments.  All the experiments are run with the Matlab code on the work station of CPU 2.4GHz with RAM 4.00G.

\subsection{Piecewise constant image segmentation}
In this subsection, we use two artificial synthetic piecewise constant images shown in Figure \ref{fig40}  as the testing images.  The left image includes a simpler geometrical structures with two  different intensities, but the right image
\begin{figure}[h!]
\centering
\begin{minipage}[htbp]{0.245\linewidth}
\centering
\includegraphics[width=1.15in]{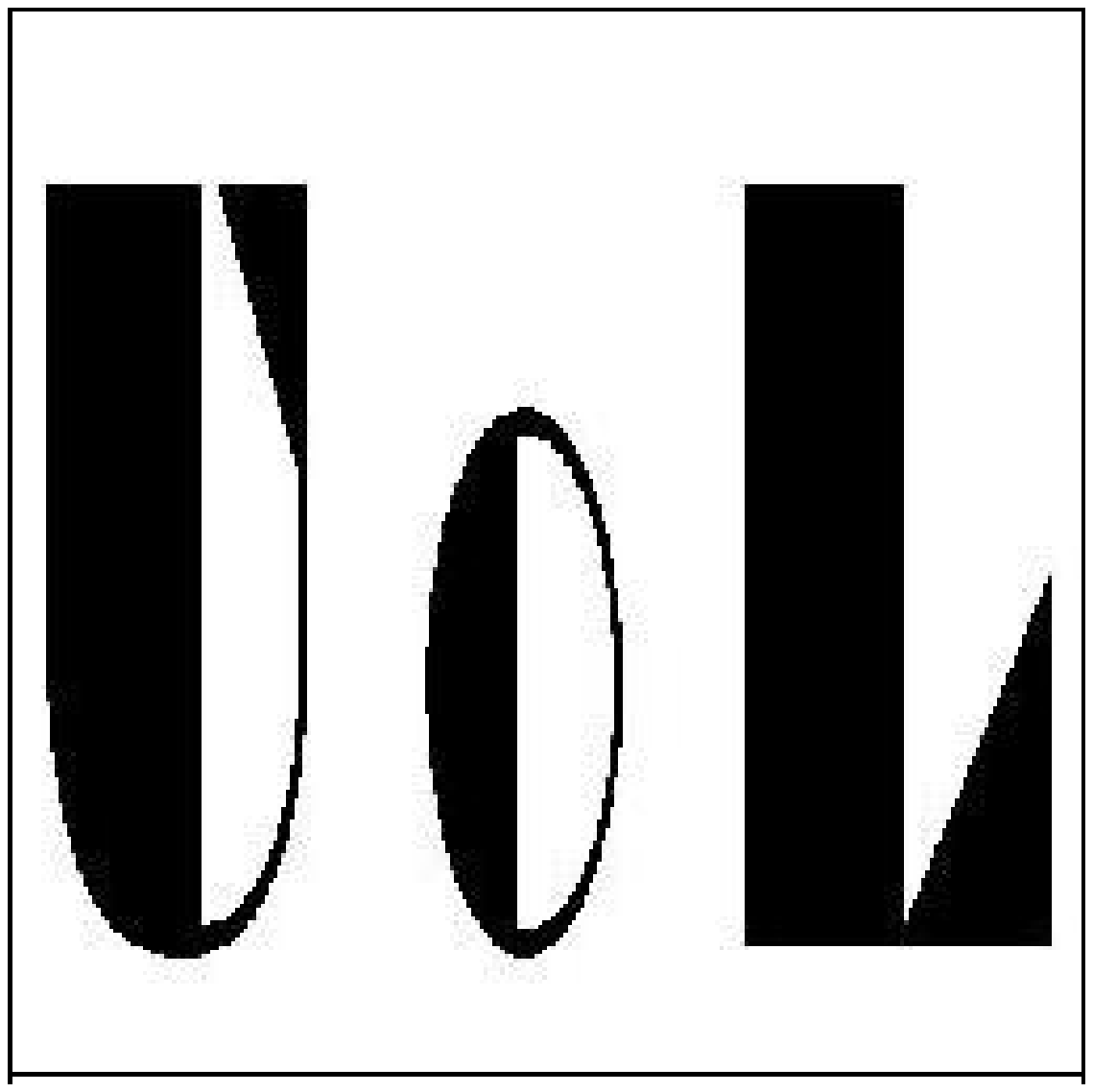}
{(a) Synthesis Image}
\end{minipage}
\begin{minipage}[htbp]{0.245\linewidth}
\centering
\includegraphics[width=1.15in]{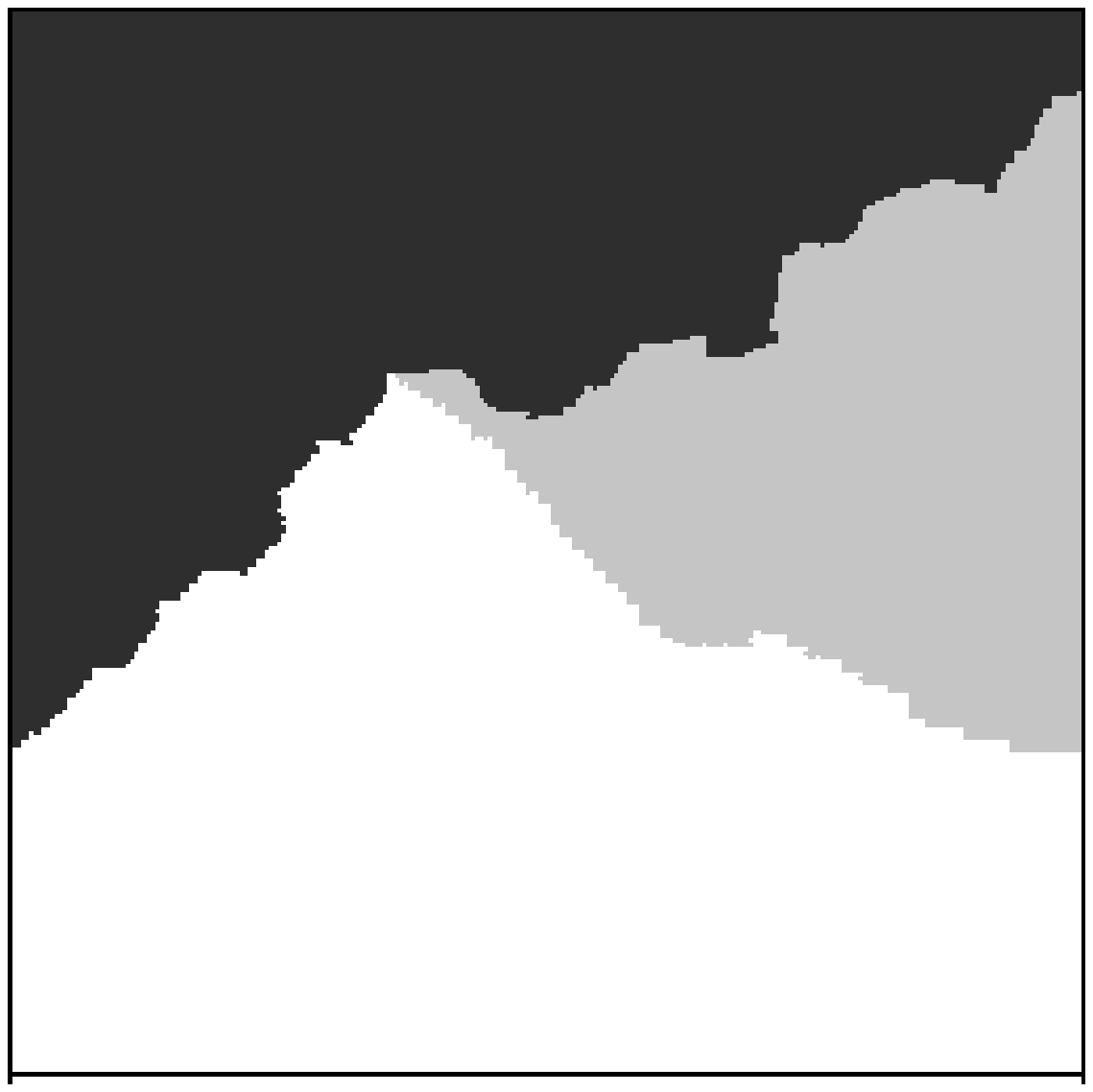}
{(b) Synthesis Image}
\end{minipage}
\caption{\label{fig40}The original image used in Example \ref{ex41} and \ref{ex42}. }
\end{figure}
includes more complicated boundaries with three different intensities. Since  the original segmentation results can  be accurately obtained for these images,  we define the segmentation accuracy(SA) as
\begin{eqnarray*}
SA=\frac{\mathrm{Number\hspace{2pt}of\hspace{2pt}uncorrectly\hspace{2pt}classified\hspace{2pt}pixels}}{\mathrm{Total
\hspace{2pt}number\hspace{2pt}of\hspace{2pt}pixels}}\times 100\%
\end{eqnarray*}
in order to evaluate the accuracy of the used models.

\begin{example}\label{ex41}
We choose Figure \ref{fig40}(a) for testing and comparison.  The degraded images are  shown in Figure \ref{fig411}. In Figure \ref{fig41}-\ref{fig43}, we show some
\begin{table}[htbp]
\centering
\begin{tabular}{|c||c||c||c|}
\hline Methods&Fig. 1&Fig. 2&Fig.3\\
\hline CVM&1.42\% &1.14\%&5.17\%\\
\hline MFPM&0.98\%&1.05\%&2.29\%\\
\hline PDPM&0.85\%&0.84\%&2.06\%\\
\hline TSMSM&0.72\%&0.82\%&1.39\%\\
\hline HTVUM&\textbf{0.70}\%&0.65\%&1.18\%\\
\hline HTVWM&0.72\%&\textbf{0.60}\%&\textbf{1.16}\%\\
\hline
\end{tabular}
\caption{\label{tab1}The SA in Example \ref{ex41}.}
\end{table}
segmentation results generated by using six kinds of aforementioned different models and methods, where the test image "UOL" is segmented into two phases  as foreground and background.  The related results are arranged in Table \ref{tab1}.

\begin{figure}[h!]
\centering
\begin{minipage}[htbp]{0.2\linewidth}
\centering
\includegraphics[width=0.85in]{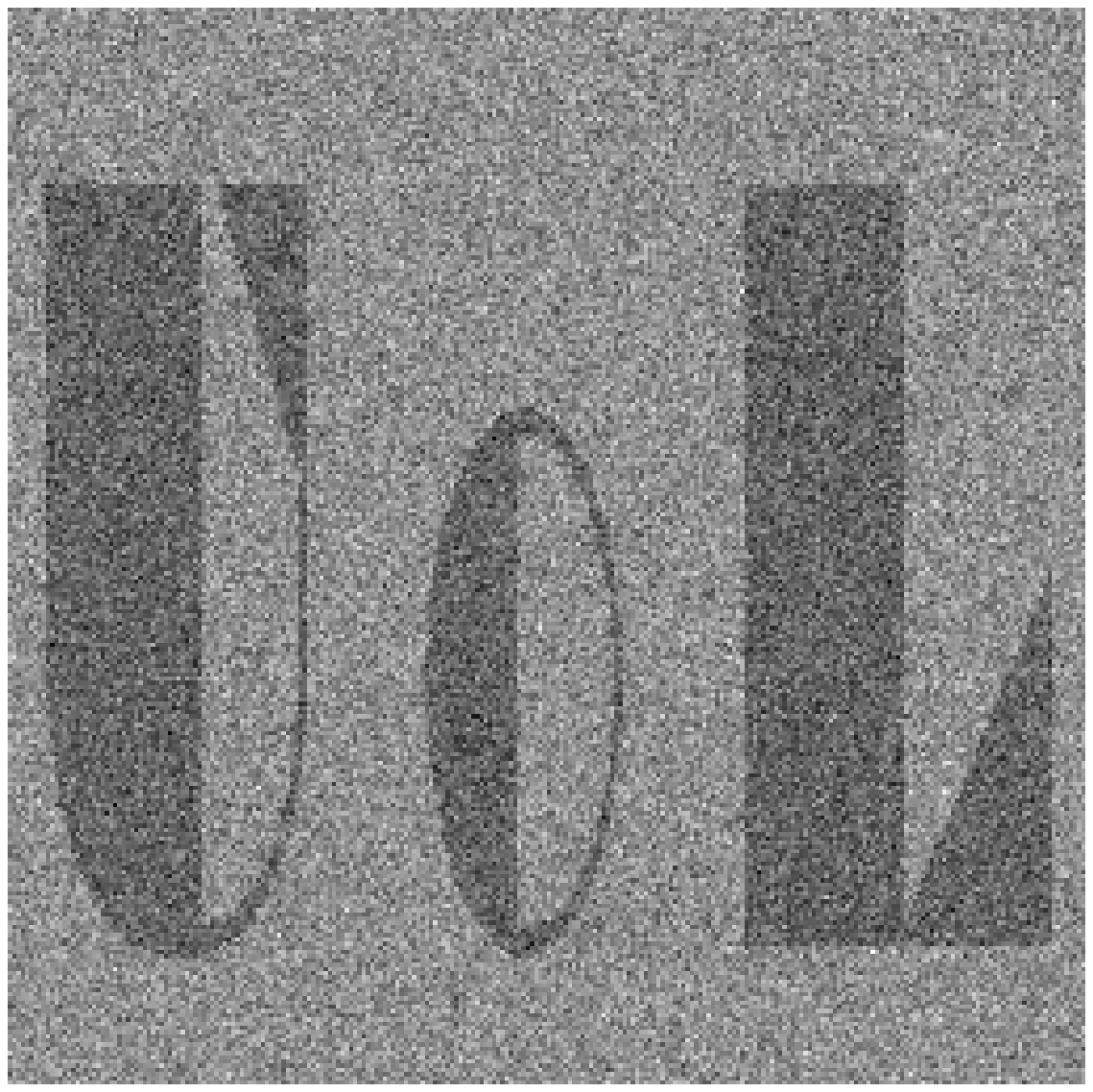}
{(a)  }
\end{minipage}
\begin{minipage}[htbp]{0.2\linewidth}
\centering
\includegraphics[width=0.85in]{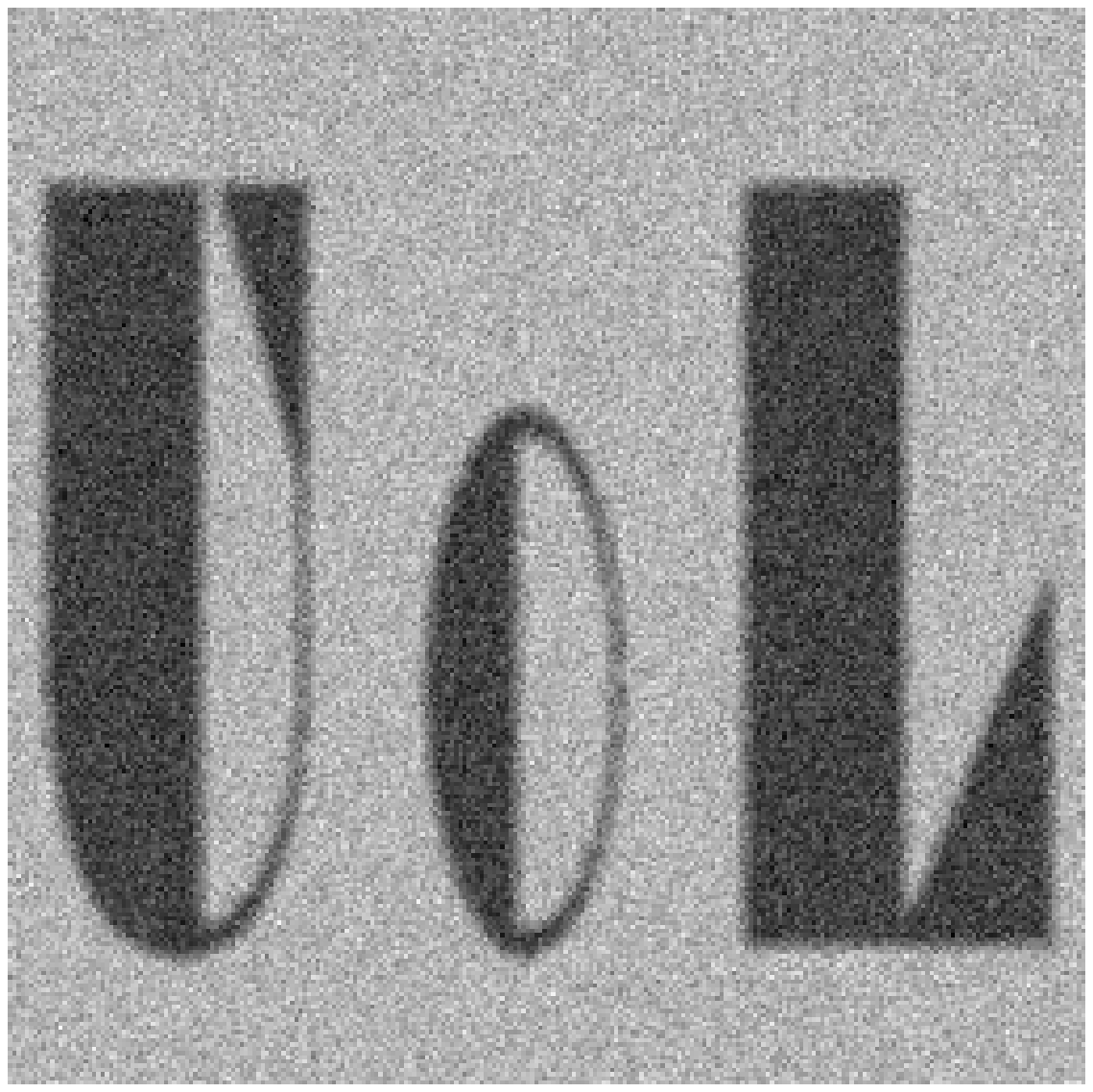}
{(b)}
\end{minipage}
\begin{minipage}[htbp]{0.2\linewidth}
\centering
\includegraphics[width=0.85in]{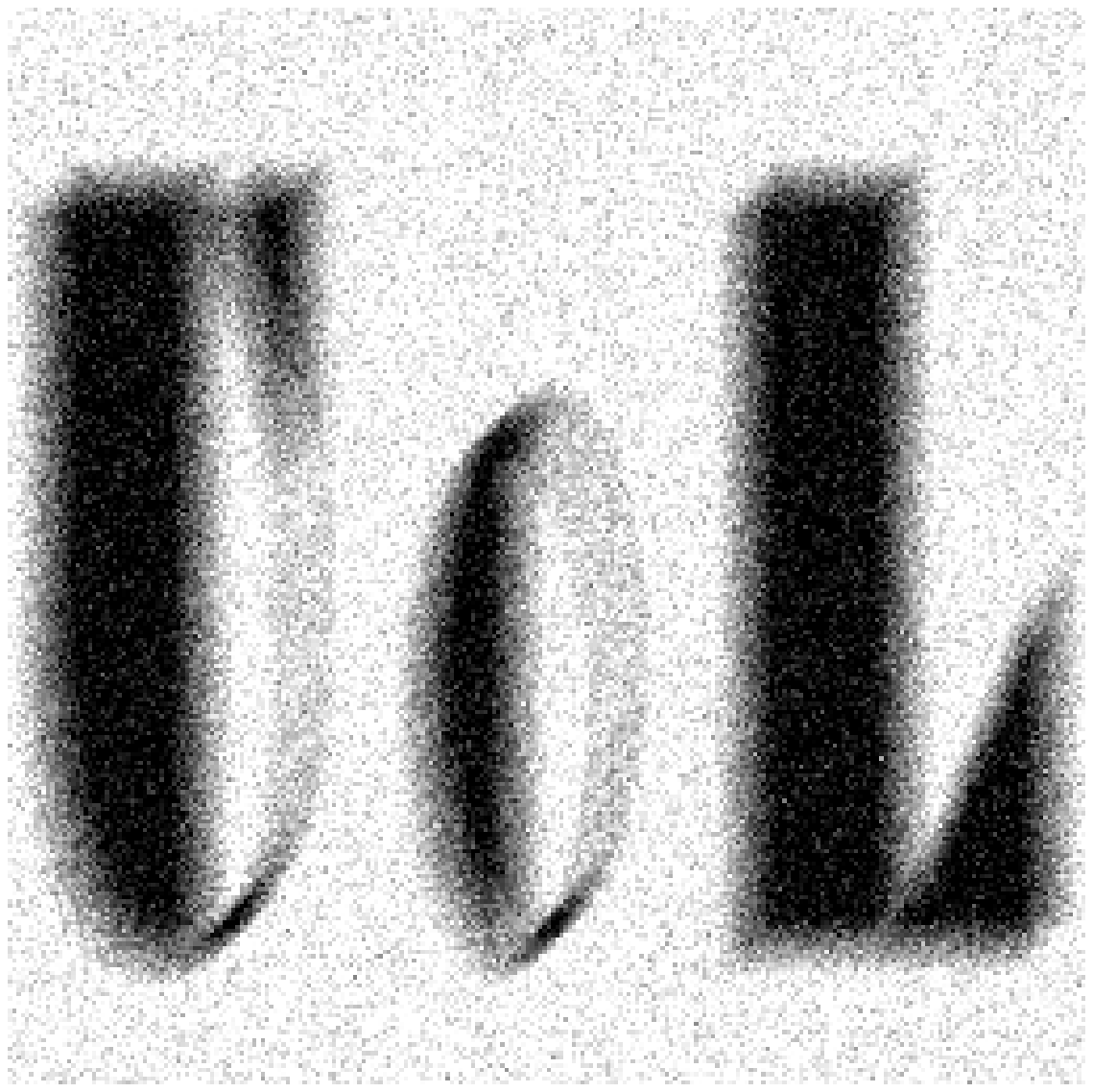}
{(c) }
\end{minipage}
\begin{minipage}[htbp]{0.1\linewidth}
\centering
\includegraphics[width=0.35in]{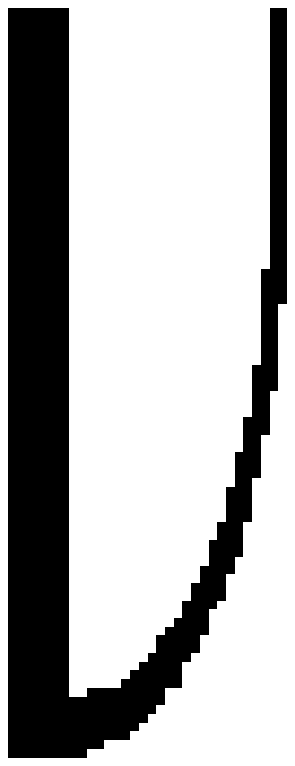}
{(d)}
\end{minipage}
\caption{\label{fig411}Some related images used in Example \ref{ex41}. (a). Contaminated by the white Gaussian noise with the variance 0.1; (b). Contaminated by the Gaussian blur $(G,5,5)$ and the white Gaussian noise with the variance 0.1; (c). Contaminated by the motion blur $(M,21,45)$ and the white Gaussian noise with the variance 0.01; (d). A part of "U" in "UOL".}
\end{figure}

As we can see from Figure \ref{fig41}, nearly all models  exhibit a similar segmentation performance to the noisy
\begin{figure}[h!]
\centering
\begin{minipage}[htbp]{0.15\linewidth}
\centering
\includegraphics[width=0.85in]{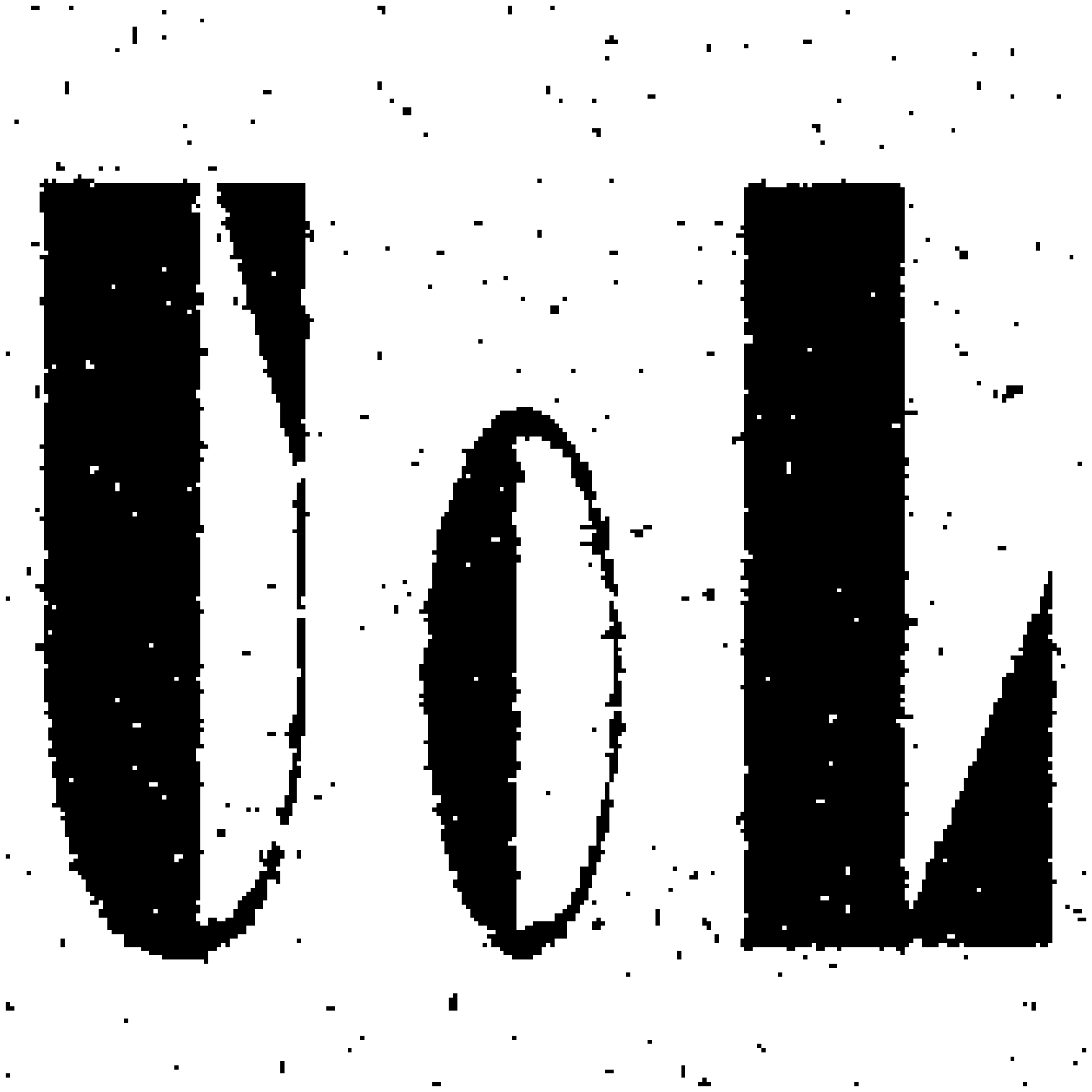}
{(a) \scriptsize{CVM}}
\end{minipage}
\begin{minipage}[htbp]{0.15\linewidth}
\centering
\includegraphics[width=0.85in]{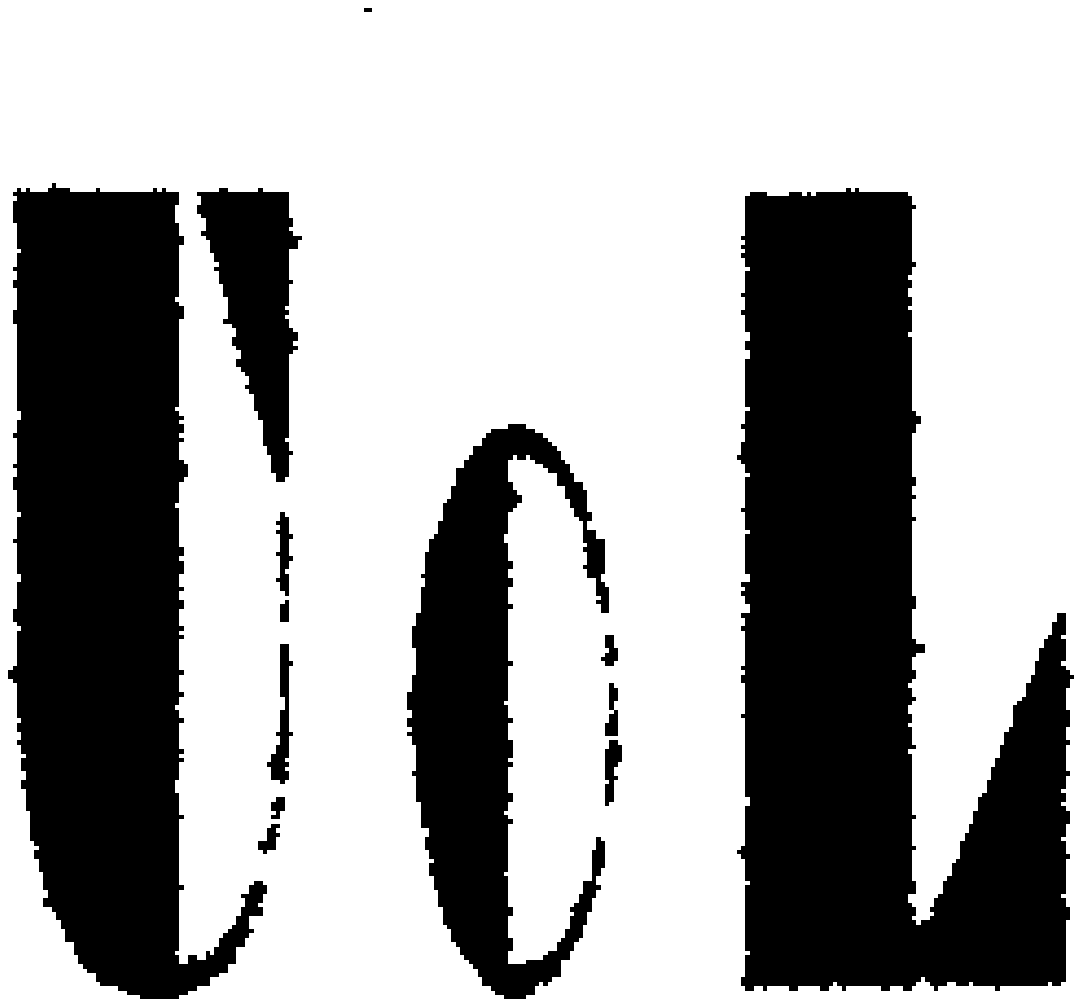}
{(b) \scriptsize{MFPM}}
\end{minipage}
\begin{minipage}[htbp]{0.15\linewidth}
\centering
\includegraphics[width=0.85in]{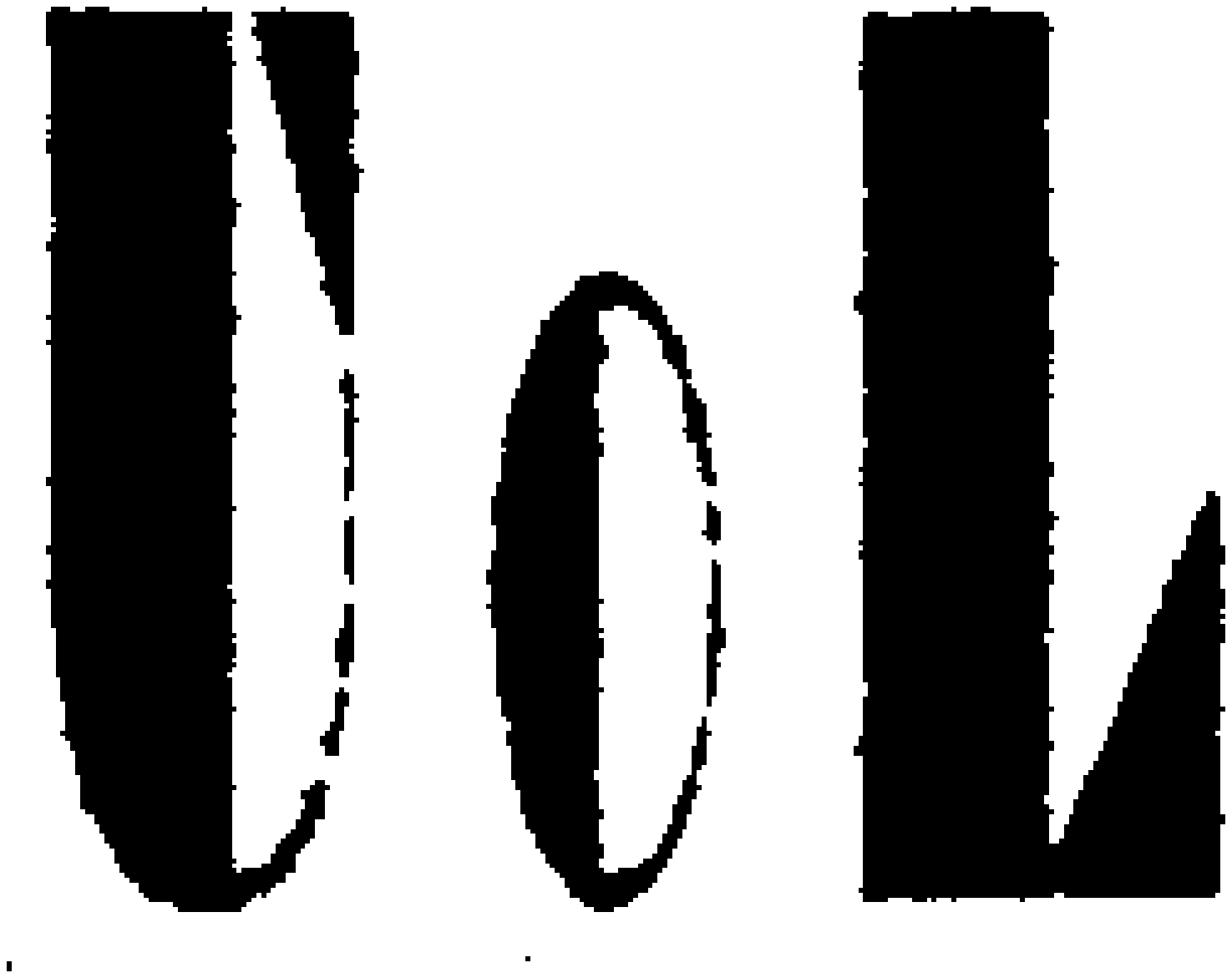}
{(c)\scriptsize{PDPM}}
\end{minipage}
\begin{minipage}[htbp]{0.15\linewidth}
\centering
\includegraphics[width=0.85in]{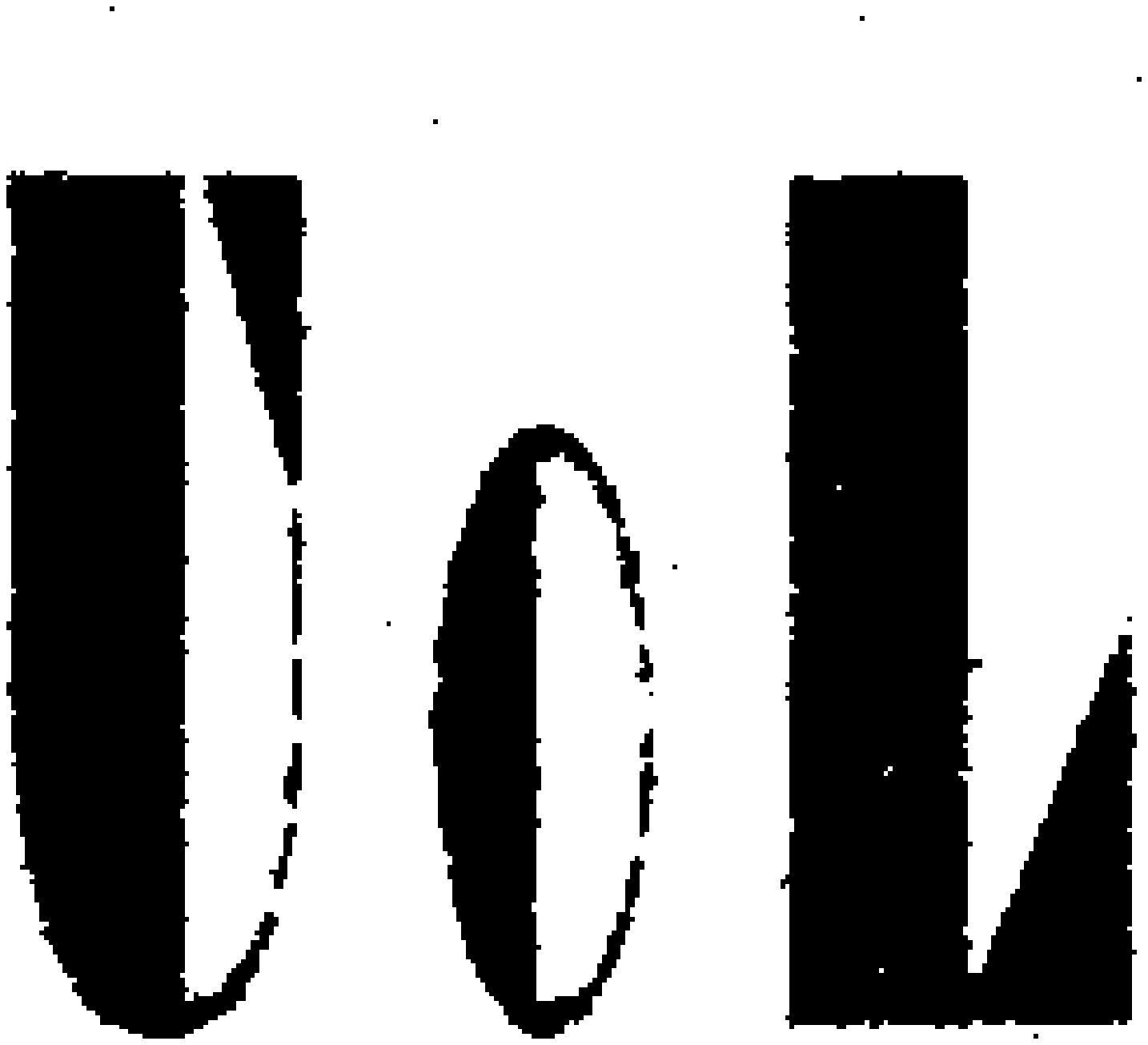}
{(d) \scriptsize{TSMSM}}
\end{minipage}
\begin{minipage}[htbp]{0.15\linewidth}
\centering
\includegraphics[width=0.85in]{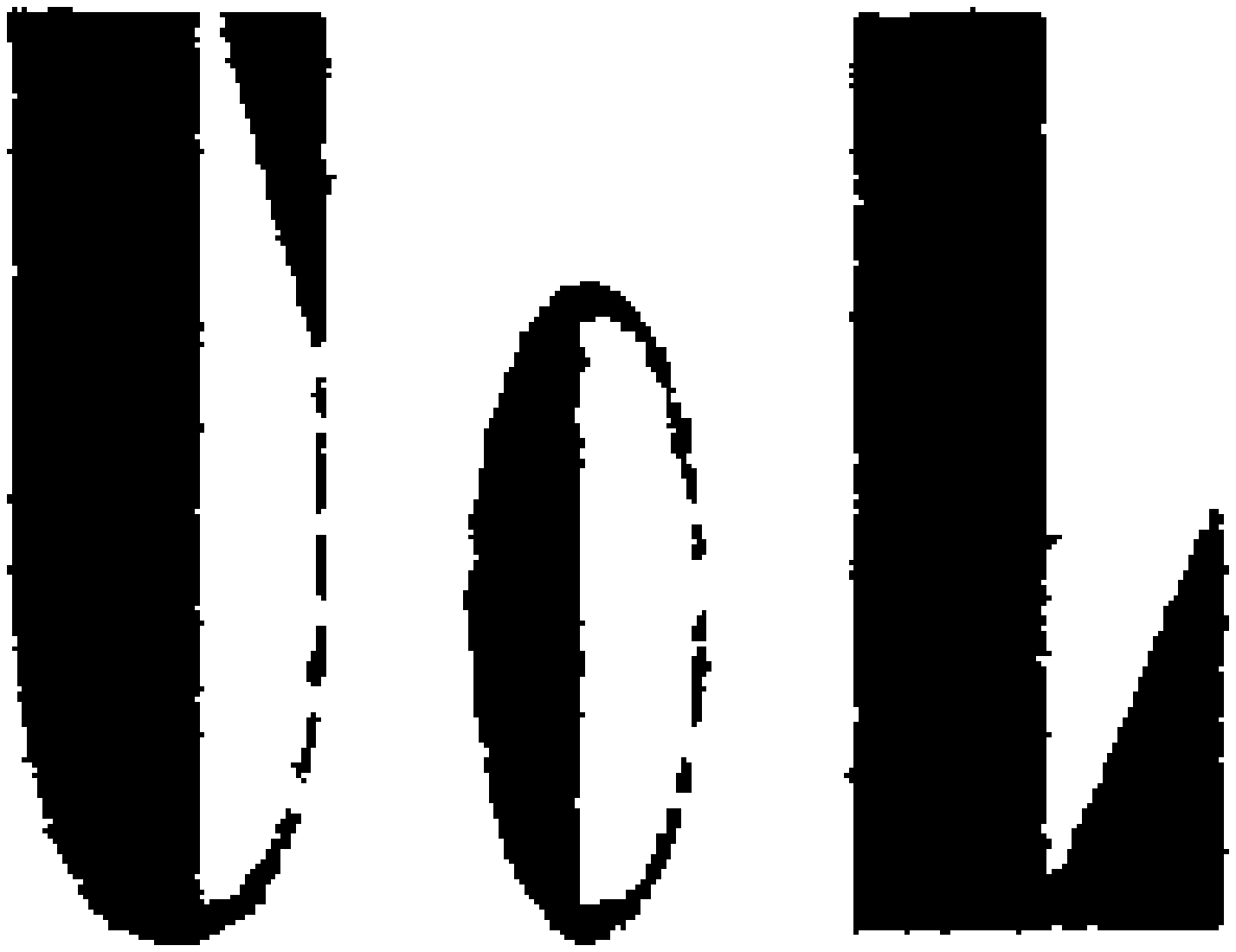}
{(e) \scriptsize{HTVUM}}
\end{minipage}
\begin{minipage}[htbp]{0.15\linewidth}
\centering
\includegraphics[width=0.85in]{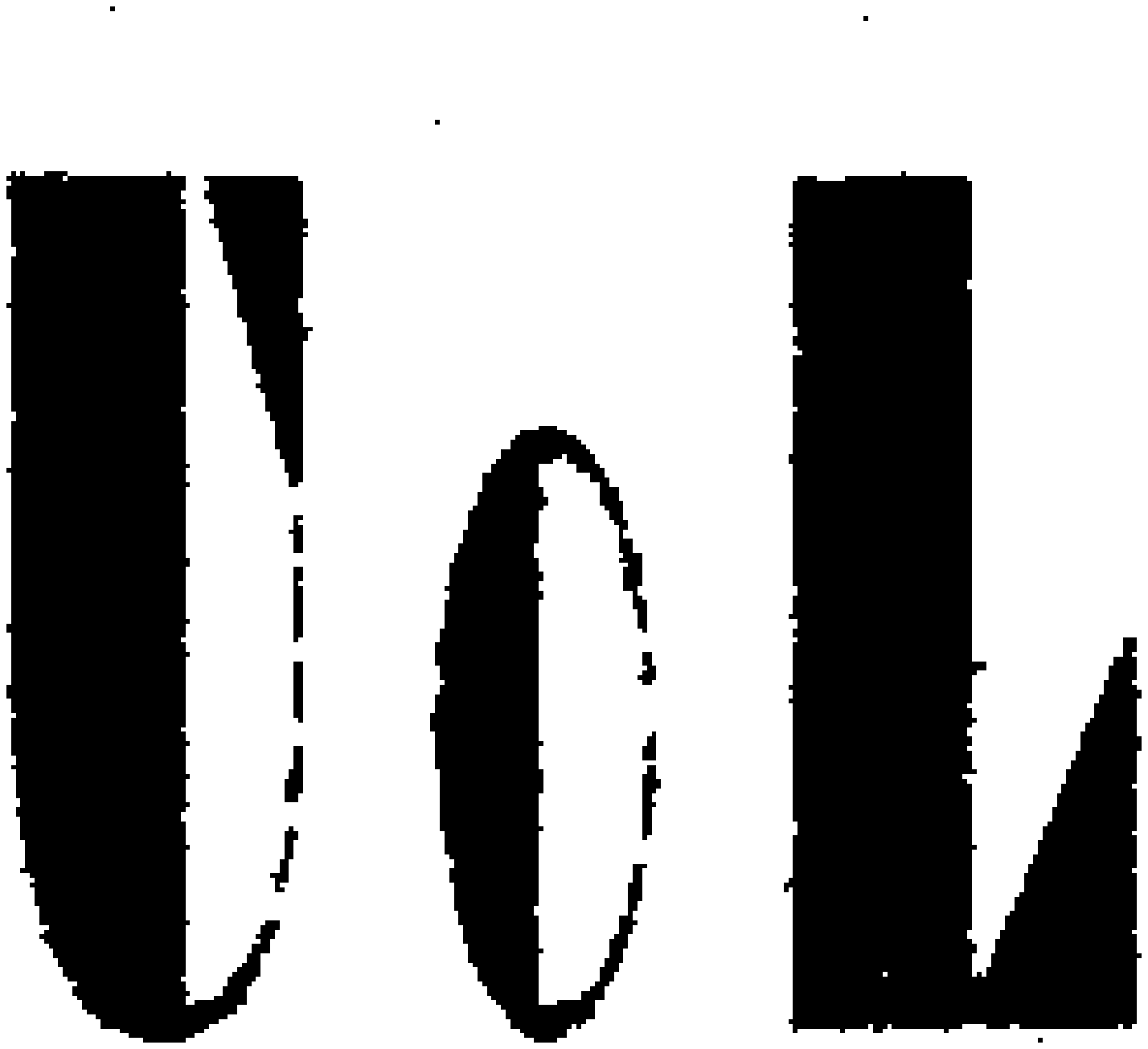}
{(f)\scriptsize{HTVWM}}
\end{minipage}\\
\begin{minipage}[htbp]{0.15\linewidth}
\centering
\includegraphics[width=0.45in]{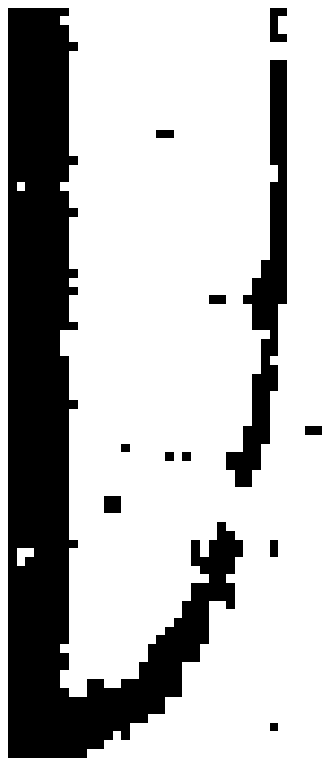}
{\hspace{10pt}(a1) \scriptsize{CVM}}
\end{minipage}
\begin{minipage}[htbp]{0.15\linewidth}
\centering
\includegraphics[width=0.45in]{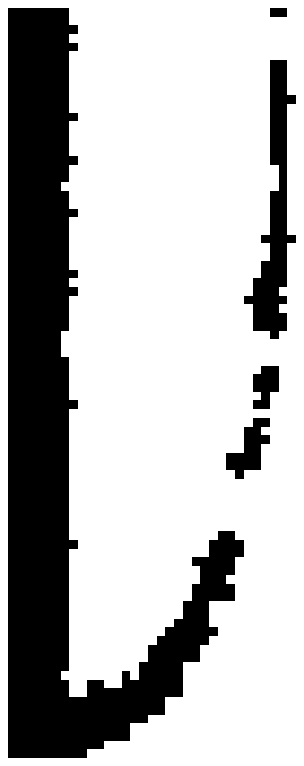}
{\hspace{10pt}(b1) \scriptsize{MFPM}}
\end{minipage}
\begin{minipage}[htbp]{0.15\linewidth}
\centering
\includegraphics[width=0.45in]{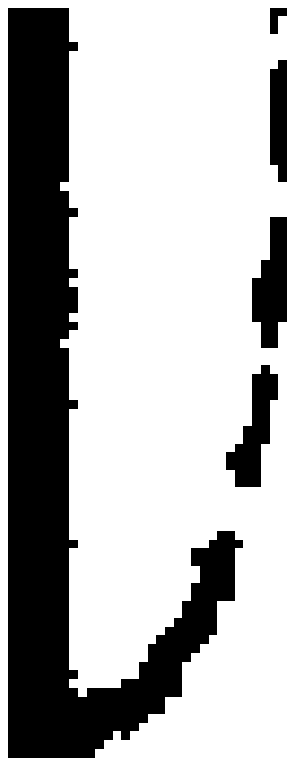}
{\hspace{10pt}(c1)\scriptsize{PDPM}}
\end{minipage}
\begin{minipage}[htbp]{0.15\linewidth}
\centering
\includegraphics[width=0.45in]{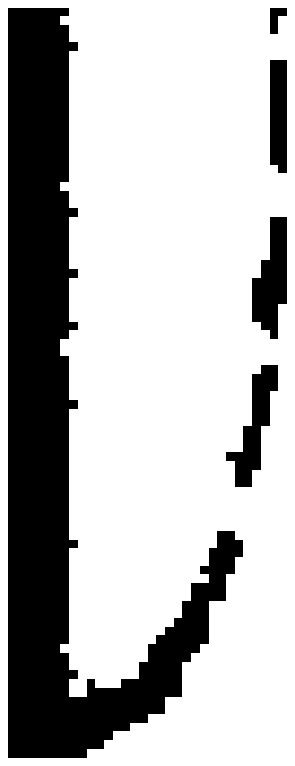}
{\hspace{10pt}(d1) \scriptsize{TSMSM}}
\end{minipage}
\begin{minipage}[htbp]{0.15\linewidth}
\centering
\includegraphics[width=0.45in]{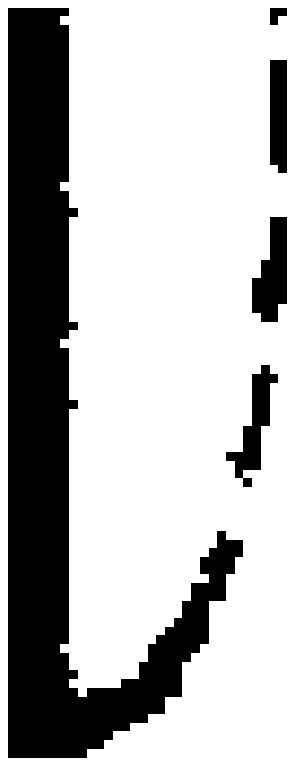}
{\hspace{10pt}(e1) \scriptsize{HTVUM}}
\end{minipage}
\begin{minipage}[htbp]{0.15\linewidth}
\centering
\includegraphics[width=0.45in]{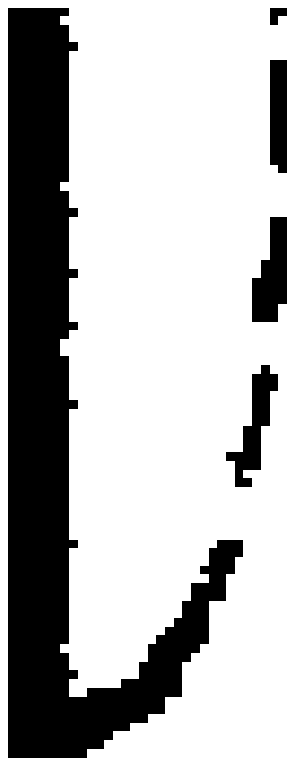}
{\hspace{10pt}(f1) \scriptsize{HTVWM}}
\end{minipage}
\caption{\label{fig41}Comparison of segmentation results  by using differential models for the noisy two-phase image in Example \ref{ex41}.  (a): The noisy image; (a1): The magnified portion of the original image; Row 2 (b1)-(f1): the magnified portion of the images corresponding to Row 1 (b-)-(f). Parameters: (b):$\gamma=200$; (c): $\gamma=0.65$; (d): $\gamma=2.1$ and $\lambda=0.02$; (e): $\gamma=1.95$ and $\lambda=0.2$; (f): $\gamma=1.95$ and $\lambda=0.1$.}
\end{figure}
image shown in Figure \ref{fig411}(a) except for the CVM \cite{5}. This is because of needing to choose the global constant for the CV model. For the blurring images in Figure \ref{fig411}(b) and \ref{fig411}(c), the CVM \cite{5} and the MFPM and PDPM  based on the  Potts-type model can not obtained efficient segmentation due to the blurring edge structures. For other three models, the degraded information has been efficiently suppressed before using the K-means method to segmentation, so we can get better segmentation results. Furthermore,  our proposed model (\ref{31}) yields the best segmentation
results because of adding the constraint and using the hybrid total variation space. We also see from Table \ref{tab1} that our proposed algorithm has a smaller SA values.
Related results can be also seen from the magnified portion of the letter U in Figure \ref{fig41}-\ref{fig43}.

\begin{figure}[h!]
\centering
\begin{minipage}[htbp]{0.15\linewidth}
\centering
\includegraphics[width=0.85in]{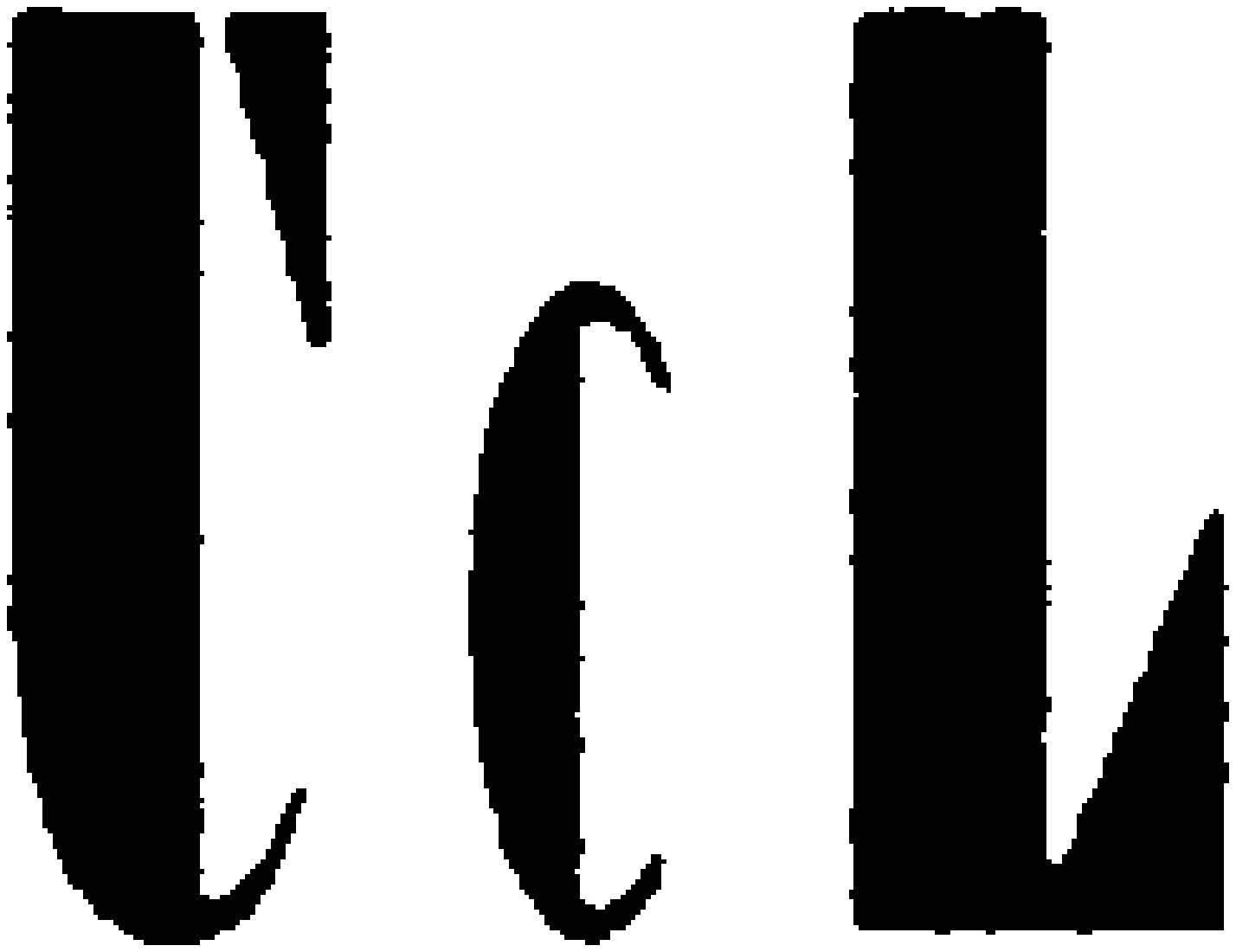}
{(a) \scriptsize{CVM}}
\end{minipage}
\begin{minipage}[htbp]{0.15\linewidth}
\centering
\includegraphics[width=0.85in]{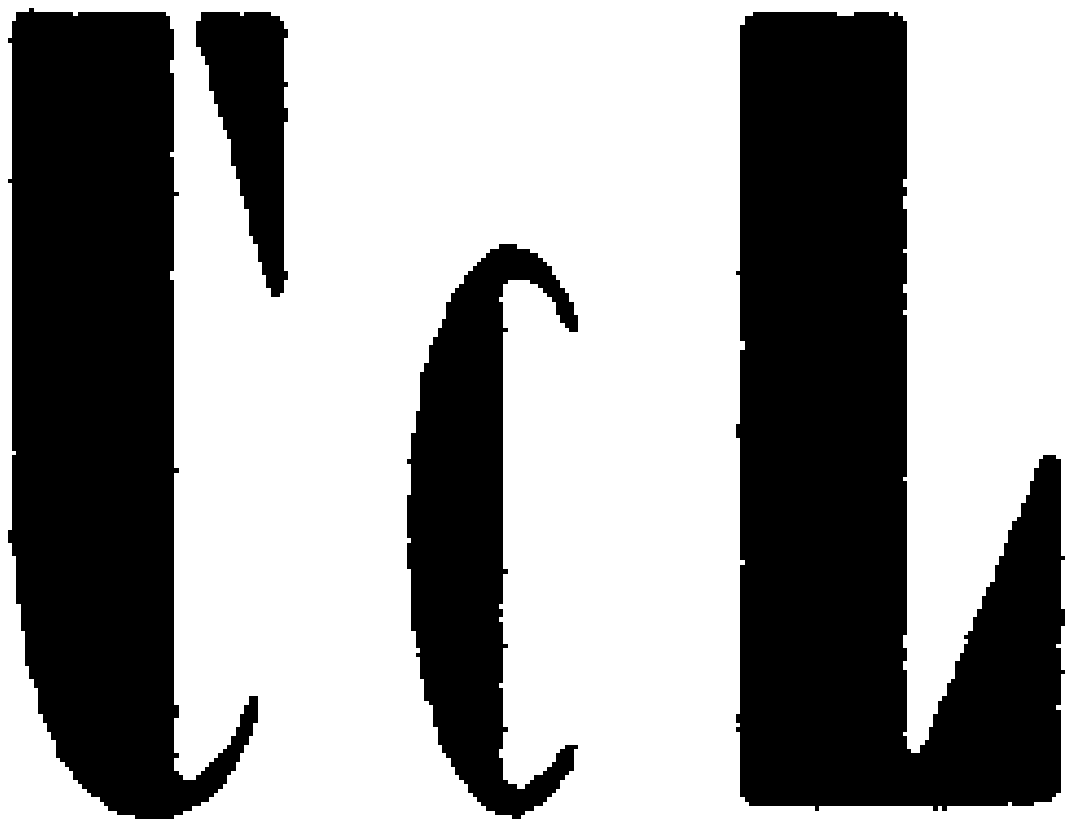}
{(b) \scriptsize{MFPM}}
\end{minipage}
\begin{minipage}[htbp]{0.15\linewidth}
\centering
\includegraphics[width=0.85in]{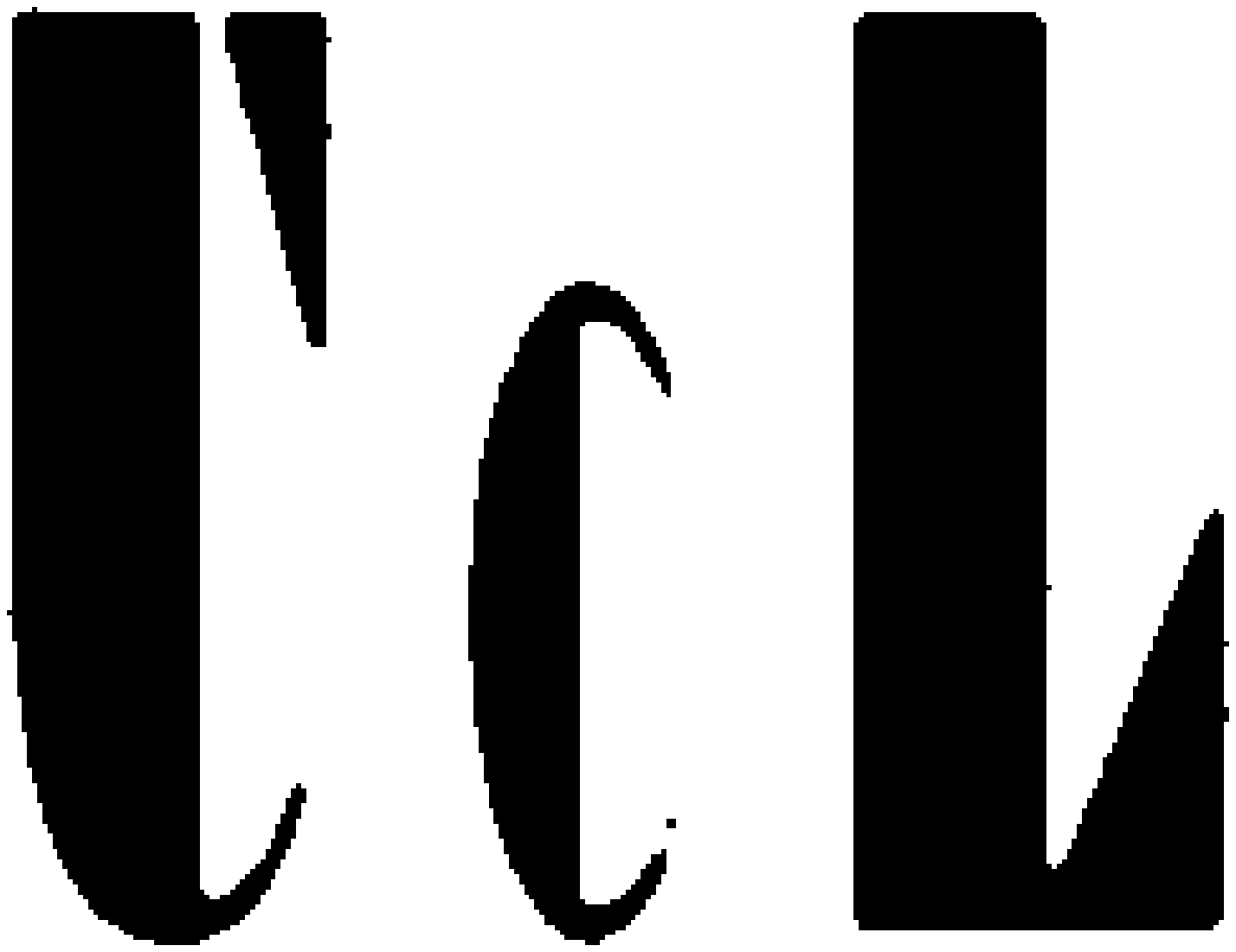}
{(c) \scriptsize{PDPM}  }
\end{minipage}
\begin{minipage}[htbp]{0.15\linewidth}
\centering
\includegraphics[width=0.85in]{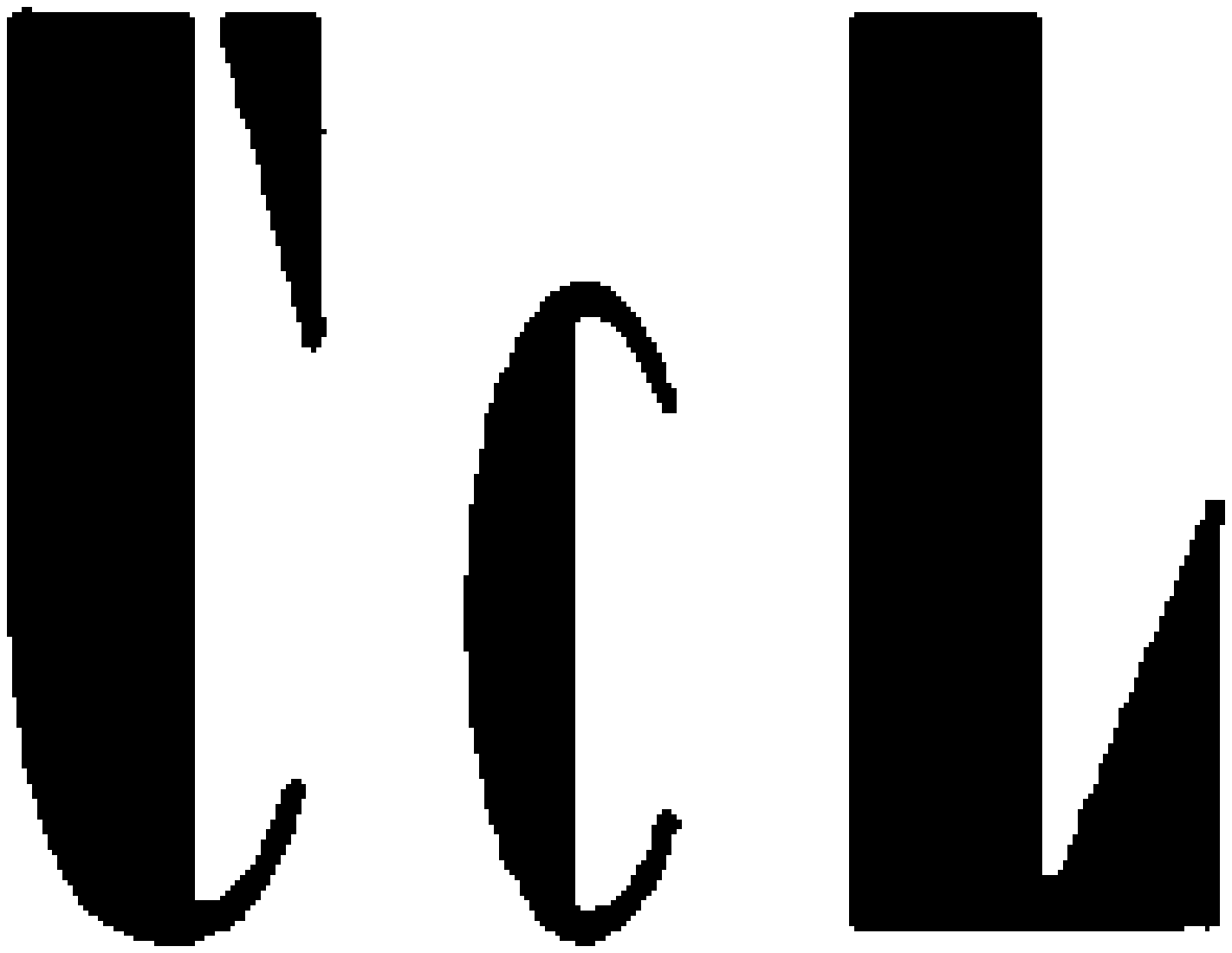}
{(d) \scriptsize{TSMSM}}
\end{minipage}
\begin{minipage}[htbp]{0.15\linewidth}
\centering
\includegraphics[width=0.85in]{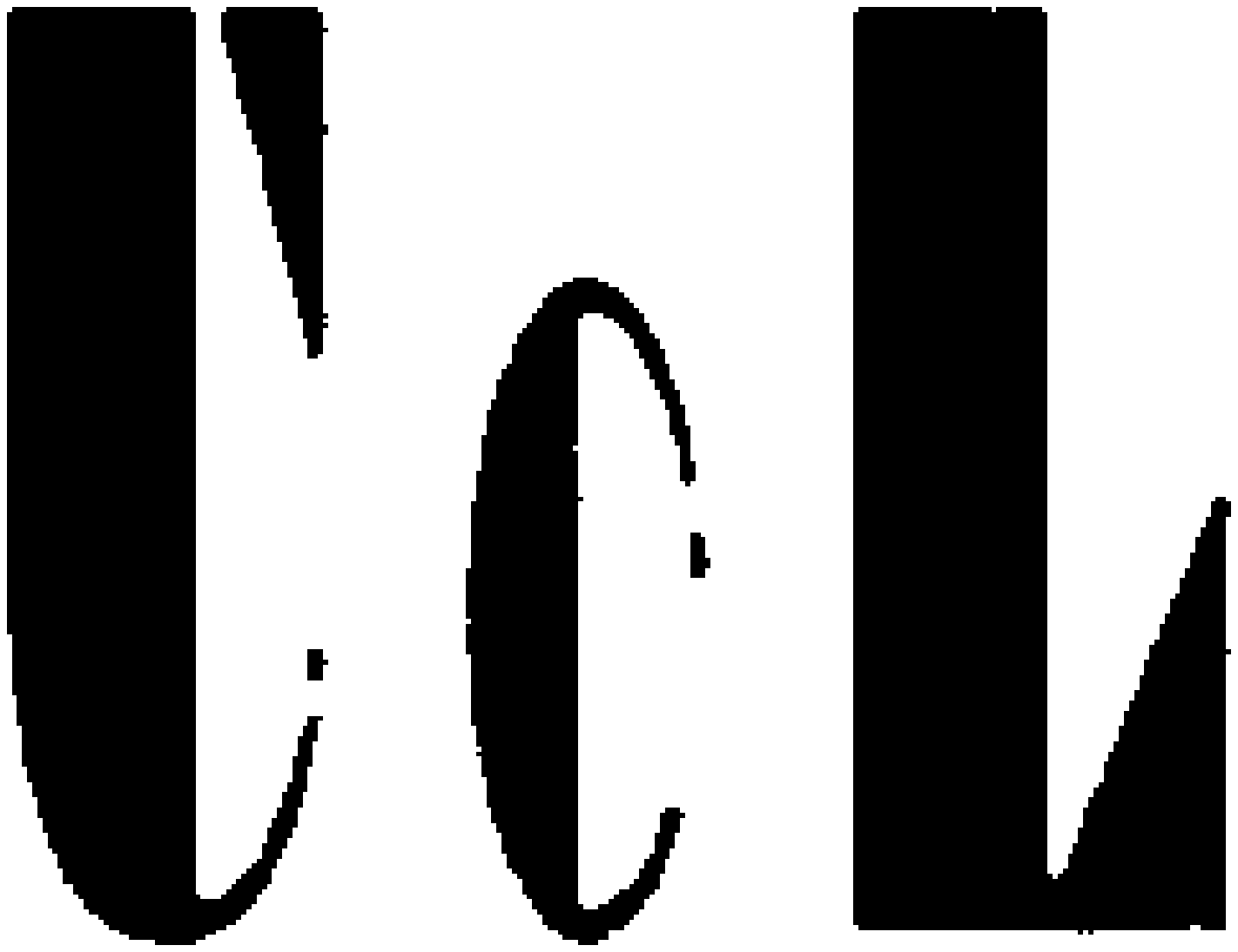}
{(e) \scriptsize{HTVUM}}
\end{minipage}
\begin{minipage}[htbp]{0.15\linewidth}
\centering
\includegraphics[width=0.85in]{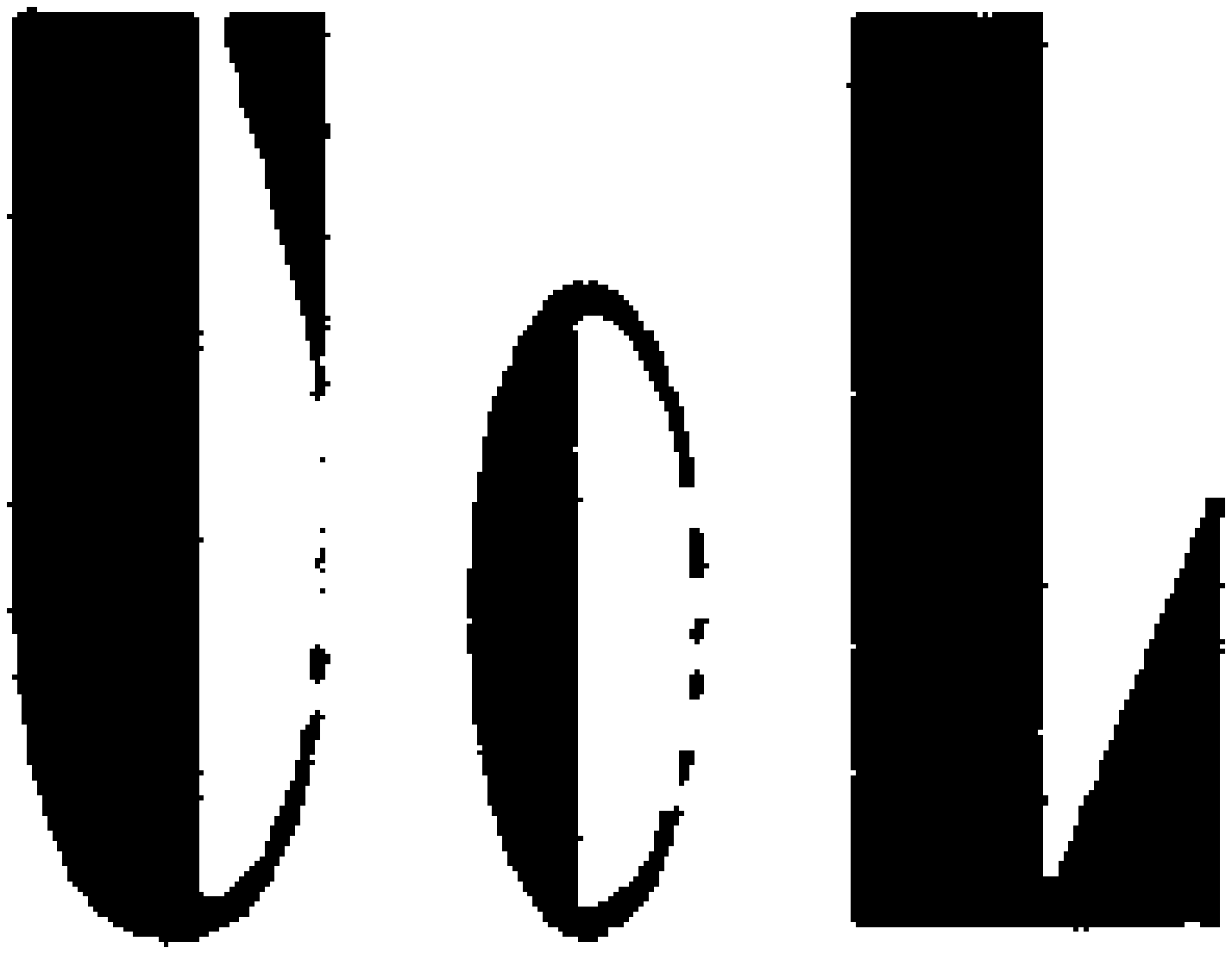}
{(f) \scriptsize{HTVWM}}
\end{minipage}\\
\begin{minipage}[htbp]{0.15\linewidth}
\centering
\includegraphics[width=0.45in]{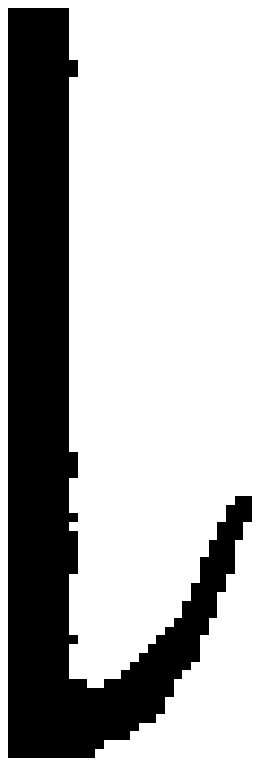}
{\hspace{10pt}(a1) \scriptsize{CVM}}
\end{minipage}
\begin{minipage}[htbp]{0.15\linewidth}
\centering
\includegraphics[width=0.45in]{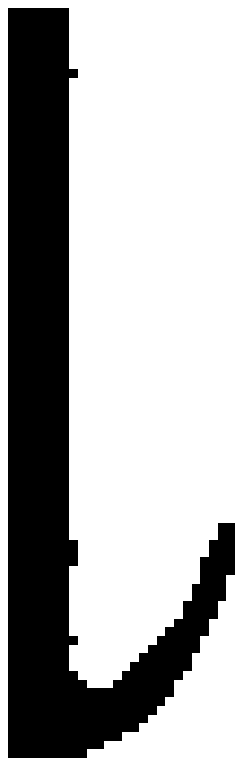}
{\hspace{10pt}(b1) \scriptsize{MFPM}}
\end{minipage}
\begin{minipage}[htbp]{0.15\linewidth}
\centering
\includegraphics[width=0.45in]{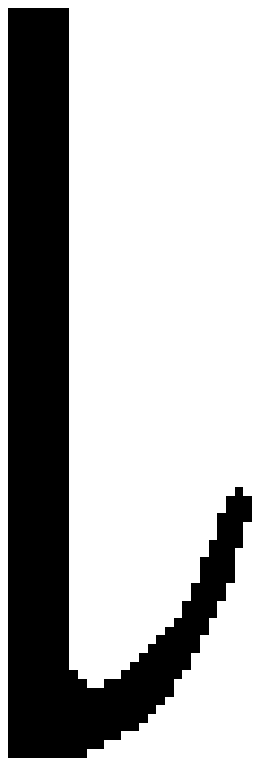}
{\hspace{10pt}(c1)  \scriptsize{FMPM}}
\end{minipage}
\begin{minipage}[htbp]{0.15\linewidth}
\centering
\includegraphics[width=0.45in]{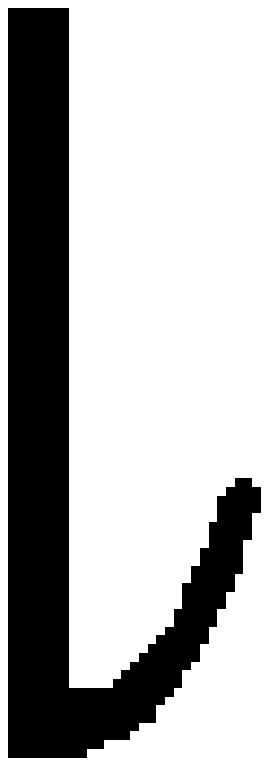}
{\hspace{10pt}(d1) \scriptsize{TSMSM}}
\end{minipage}
\begin{minipage}[htbp]{0.15\linewidth}
\centering
\includegraphics[width=0.45in]{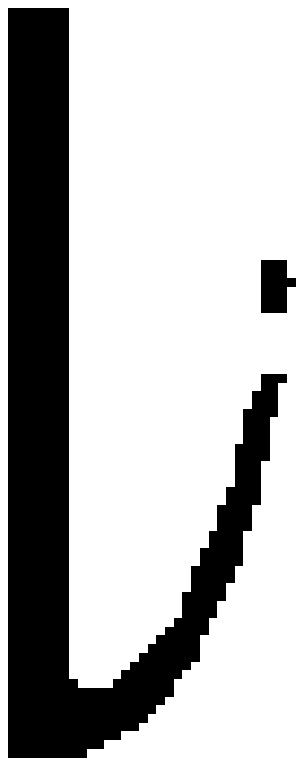}
{\hspace{10pt}(e1) \scriptsize{HTVUM}}
\end{minipage}
\begin{minipage}[htbp]{0.15\linewidth}
\centering
\includegraphics[width=0.45in]{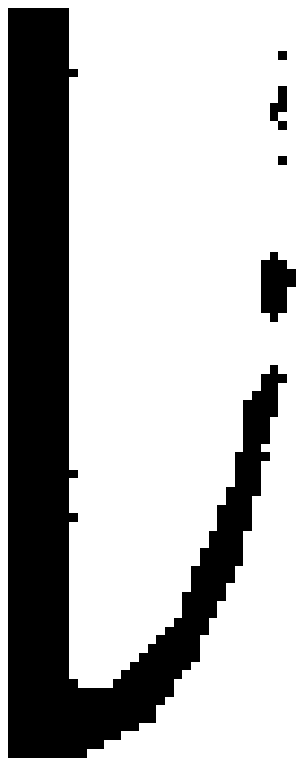}
{\hspace{10pt}(f1) \scriptsize{HTVWM}}
\end{minipage}
\caption{\label{fig42}Comparison of segmentation results  by using differential models for the noisy two-phase image in Example \ref{ex41}.  (a): The noisy image; (a1): The magnified portion of the original image; Row 2 (b1)-(f1): the magnified portion of the images corresponding to Row 1 (b-)-(f). Parameters: (b):  $\gamma=330$; (c): $\gamma=0.55$; (d): $\gamma=9.0$ and $\lambda=0.01$; (e): $\gamma=11.25$ and $\lambda=0.03$; (f): $\gamma=11.75$ and $\lambda=0.0075$.}
\end{figure}
\begin{figure}[h!]
\centering
\begin{minipage}[htbp]{0.15\linewidth}
\centering
\includegraphics[width=0.85in]{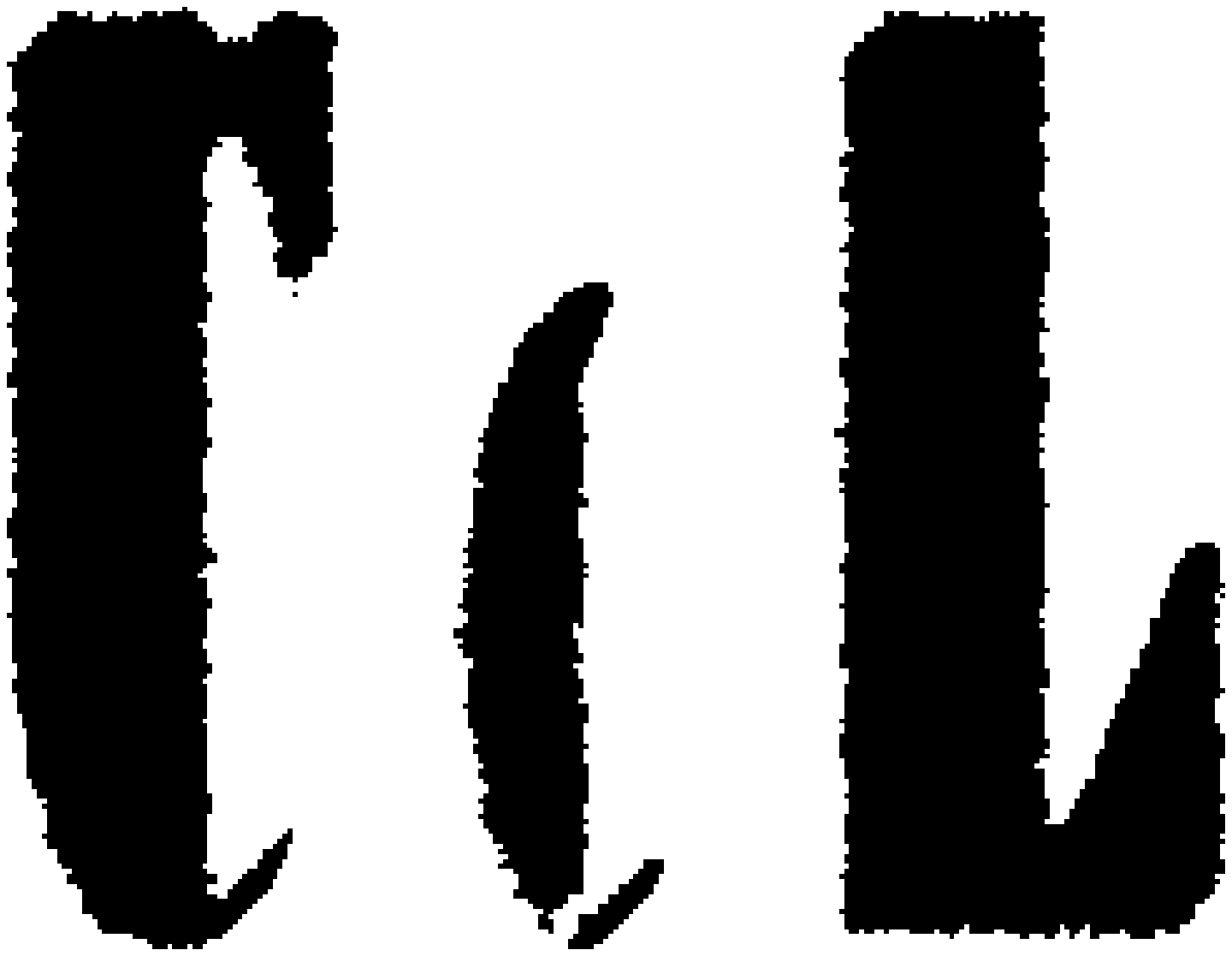}
{(a) \scriptsize{CVM}}
\end{minipage}
\begin{minipage}[htbp]{0.15\linewidth}
\centering
\includegraphics[width=0.85in]{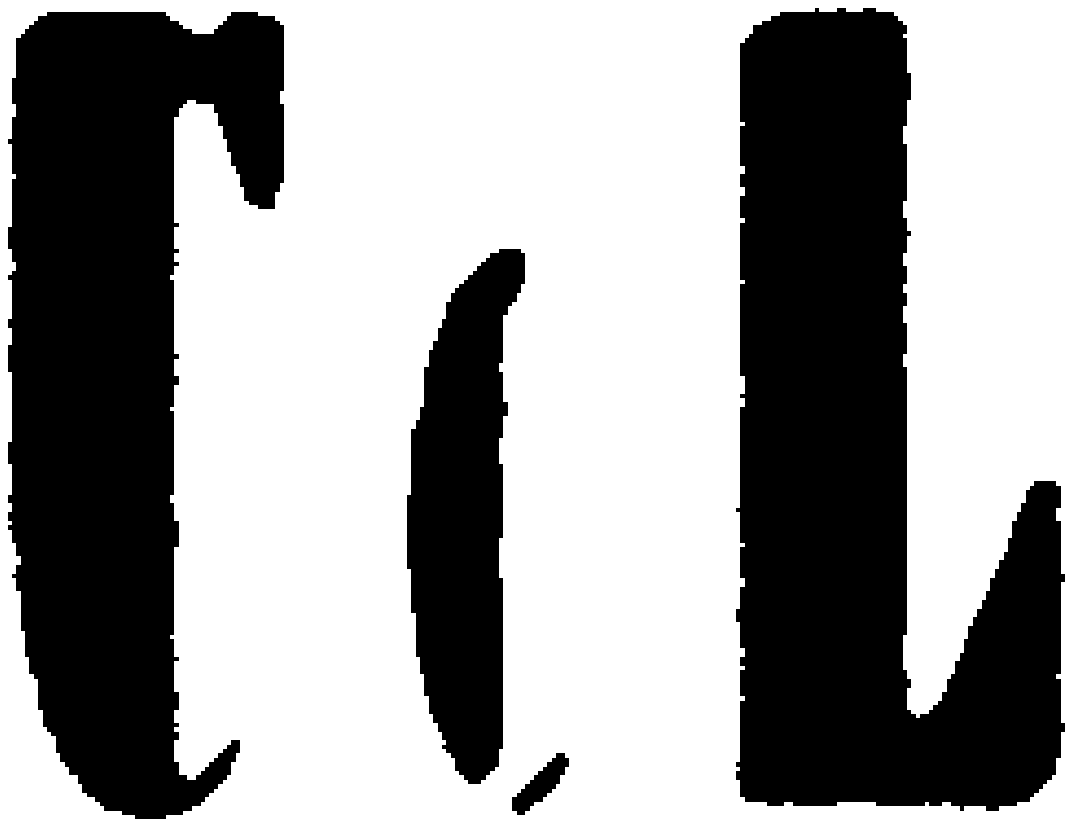}
{(b) \scriptsize{FMPM}}
\end{minipage}
\begin{minipage}[htbp]{0.15\linewidth}
\centering
\includegraphics[width=0.85in]{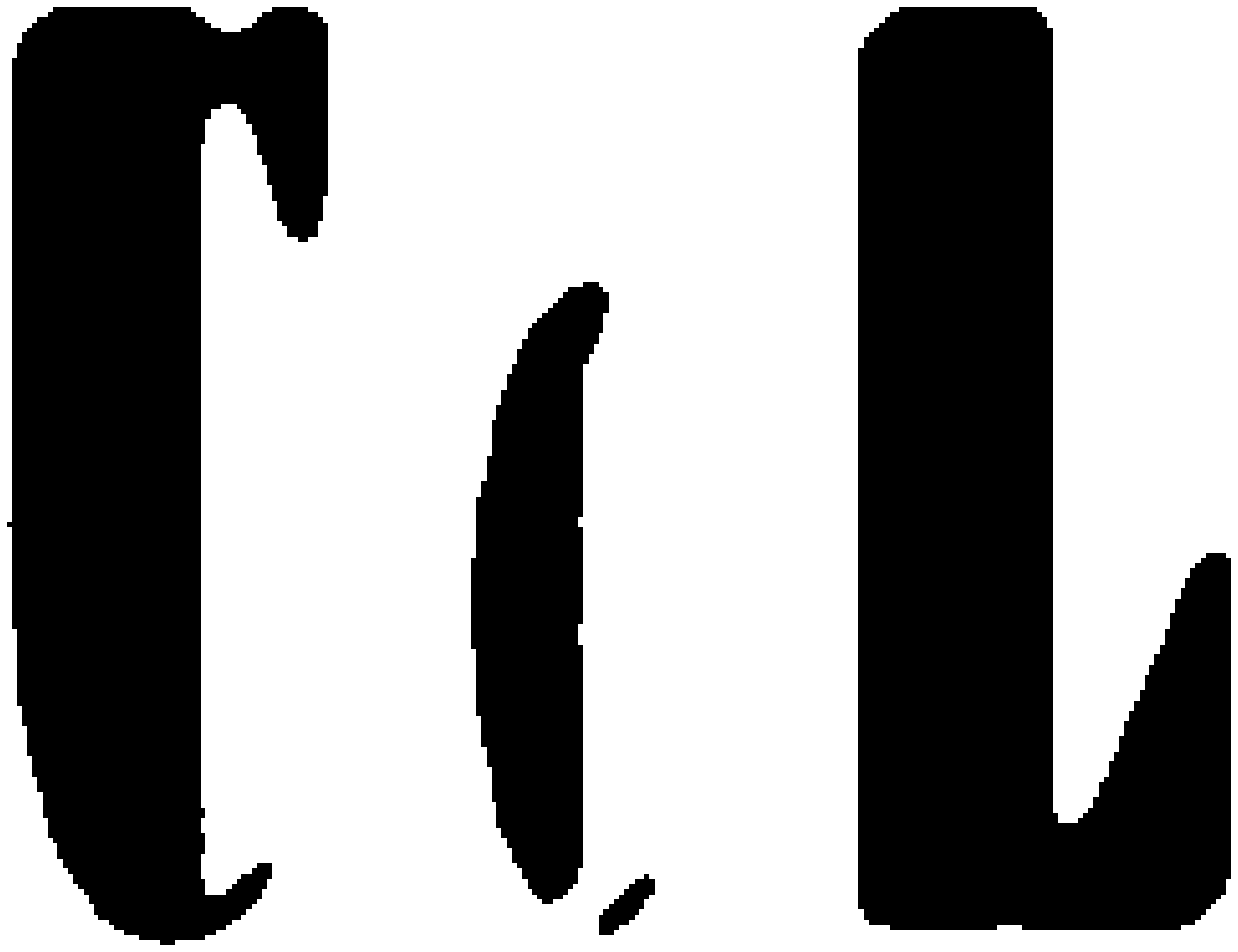}
{(c)  \scriptsize{PDPM}}
\end{minipage}
\begin{minipage}[htbp]{0.15\linewidth}
\centering
\includegraphics[width=0.85in]{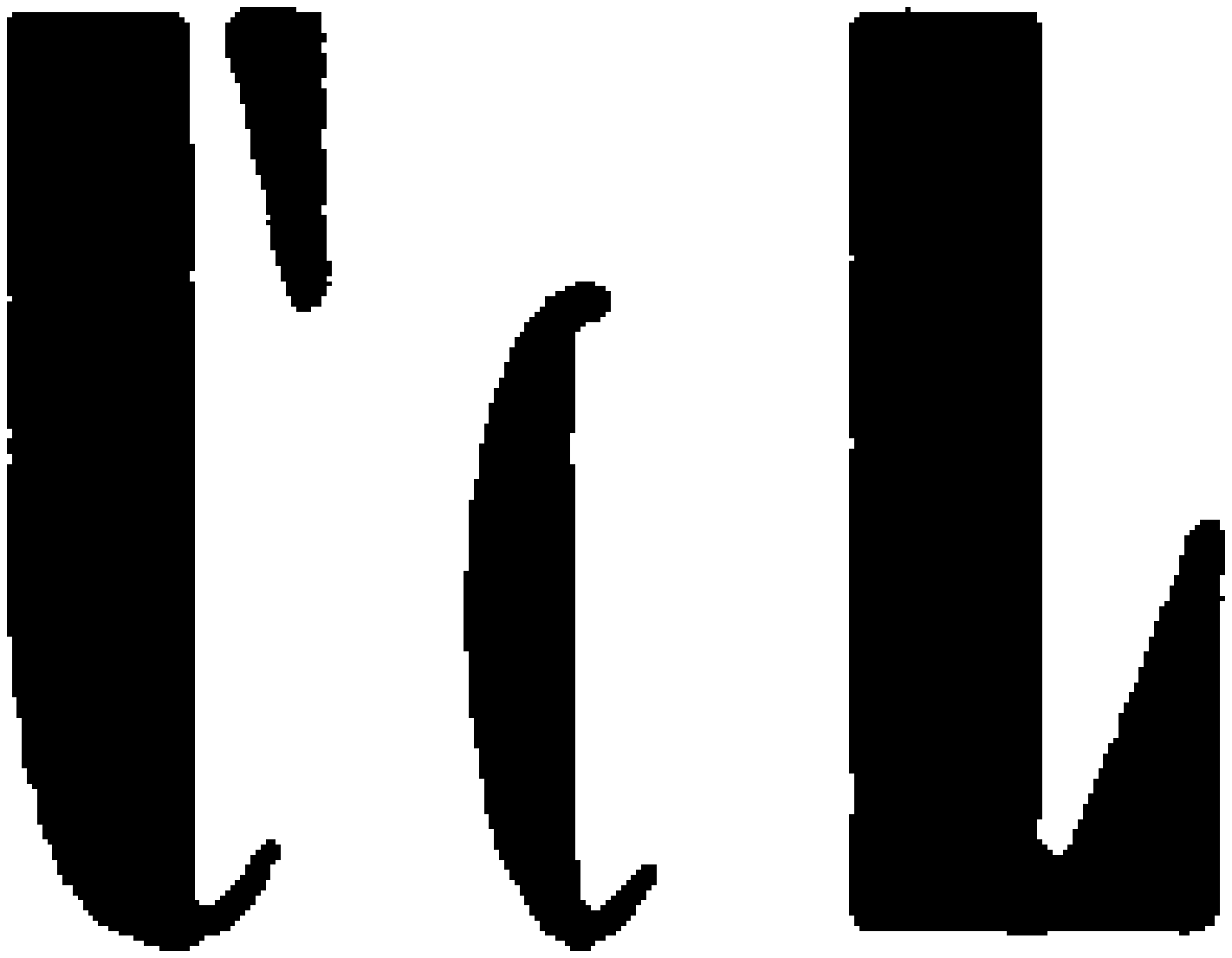}
{(d) \scriptsize{CCZM}}
\end{minipage}
\begin{minipage}[htbp]{0.15\linewidth}
\centering
\includegraphics[width=0.85in]{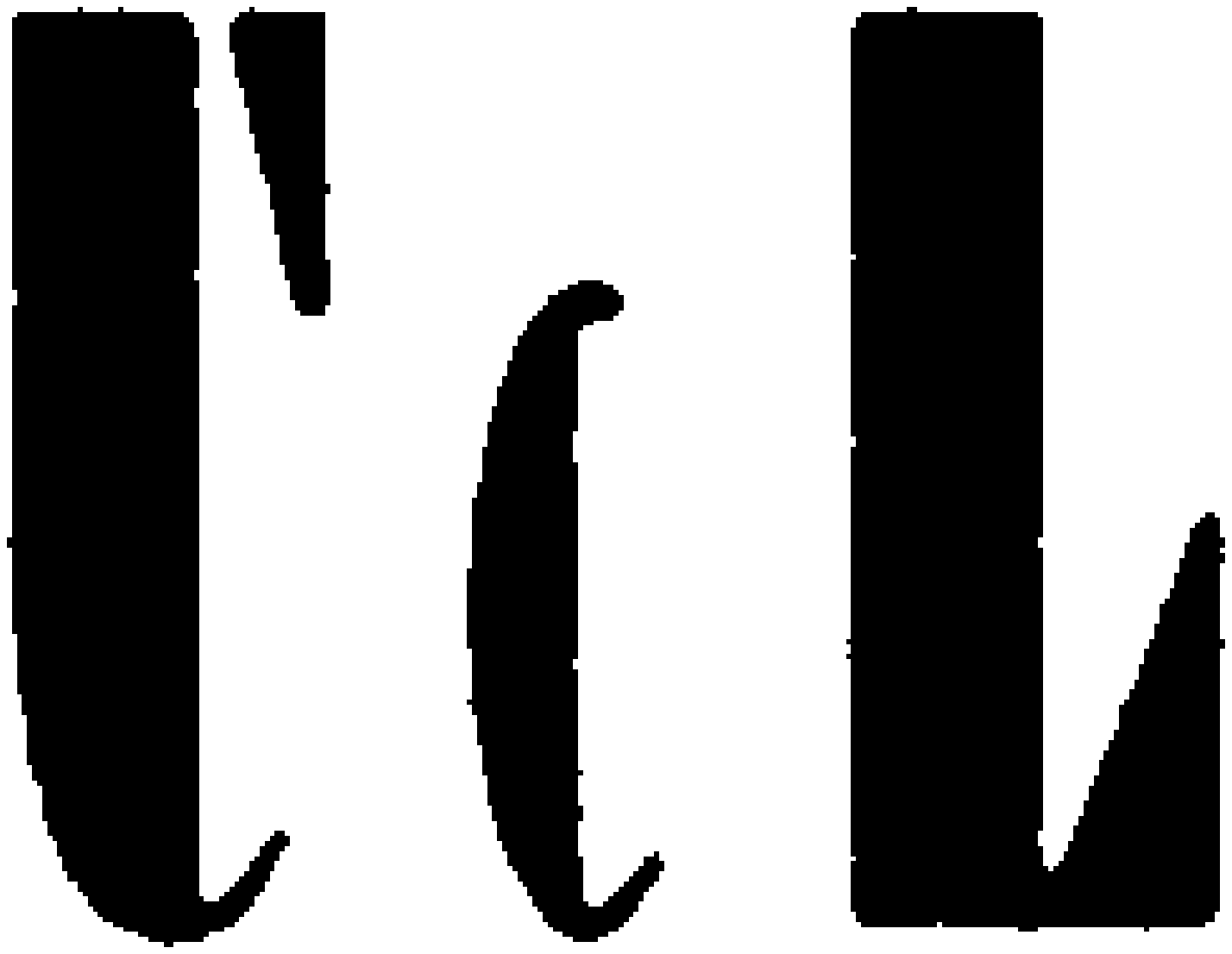}
{(e) \scriptsize{TSMSM}}
\end{minipage}
\begin{minipage}[htbp]{0.15\linewidth}
\centering
\includegraphics[width=0.85in]{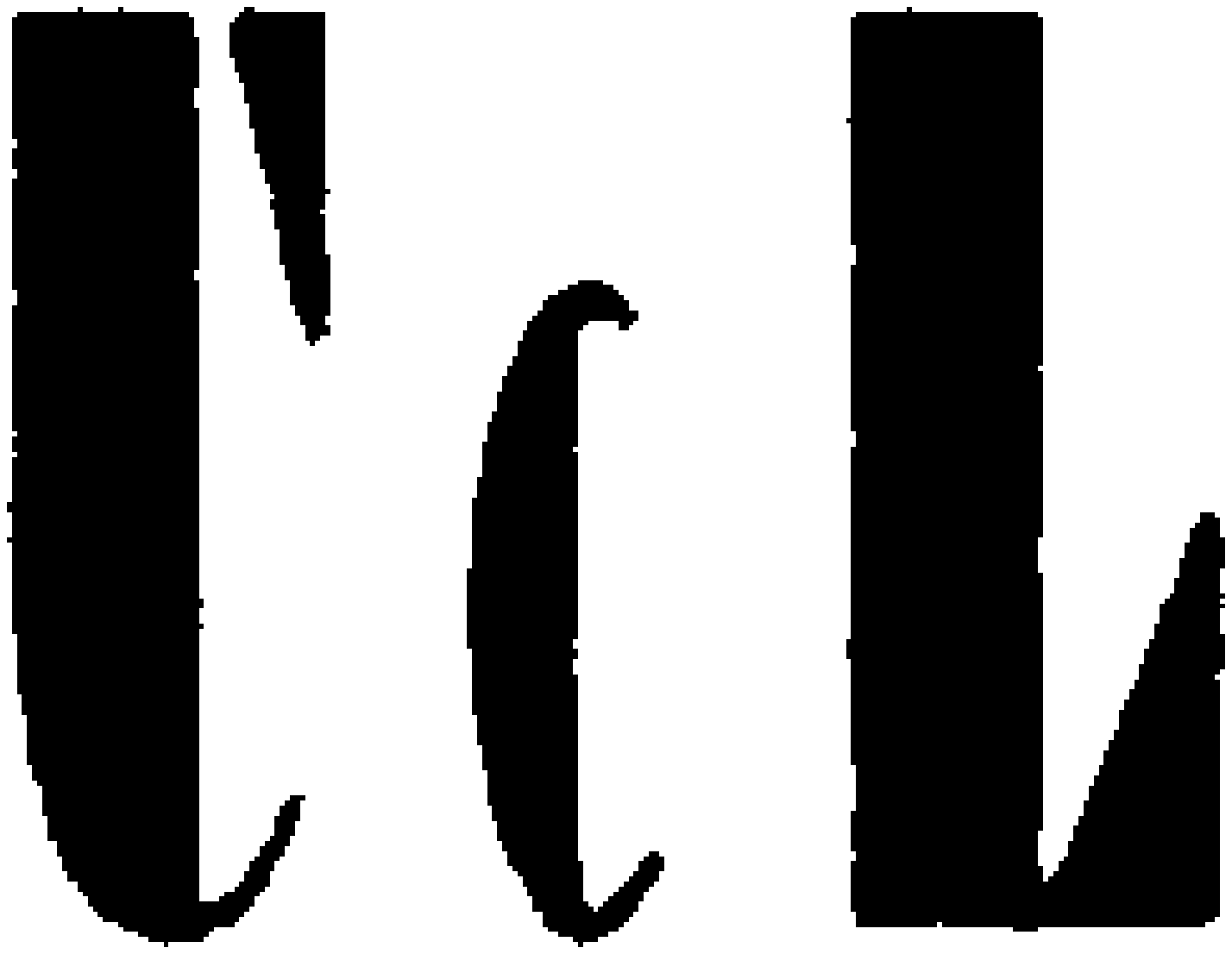}
{(f) \scriptsize{HTVWM}}
\end{minipage}\\
\begin{minipage}[htbp]{0.15\linewidth}
\centering
\includegraphics[width=0.45in]{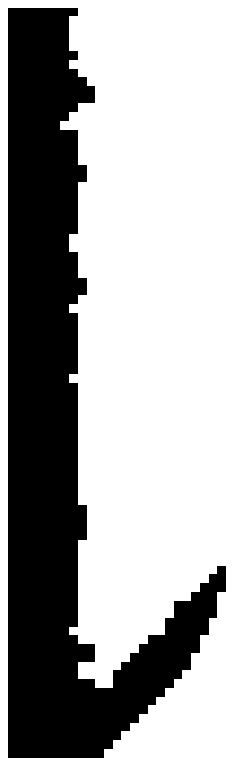}
{\hspace{10pt}(a1) \scriptsize{CVM}}
\end{minipage}
\begin{minipage}[htbp]{0.15\linewidth}
\centering
\includegraphics[width=0.45in]{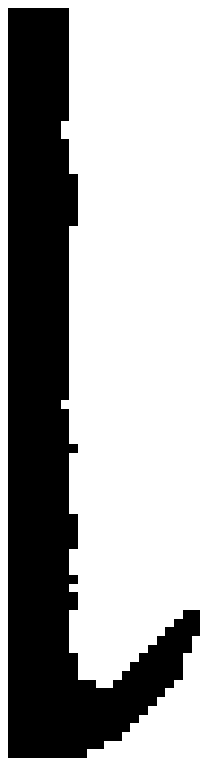}
{\hspace{10pt}(b1) \scriptsize{MFPM}}
\end{minipage}
\begin{minipage}[htbp]{0.15\linewidth}
\centering
\includegraphics[width=0.45in]{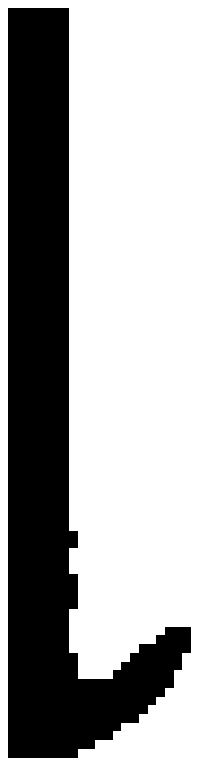}
{\hspace{10pt}(c1)  \scriptsize{PDPM}}
\end{minipage}
\begin{minipage}[htbp]{0.15\linewidth}
\centering
\includegraphics[width=0.45in]{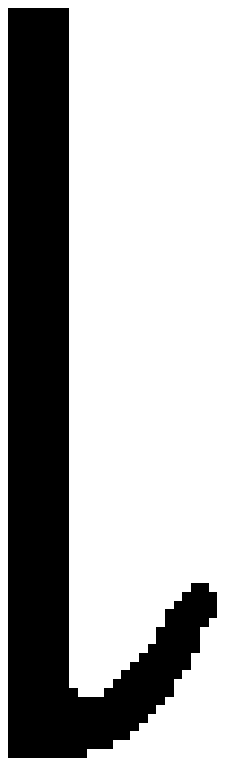}
{\hspace{10pt}(d1) \scriptsize{TSMSM}}
\end{minipage}
\begin{minipage}[htbp]{0.15\linewidth}
\centering
\includegraphics[width=0.45in]{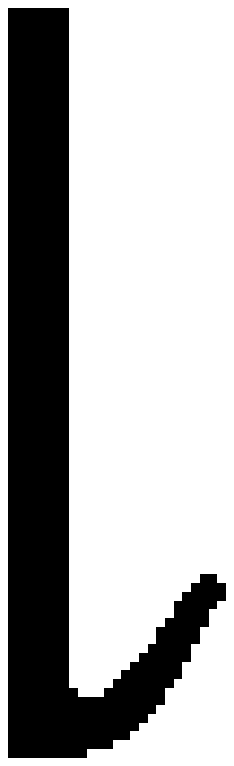}
{\hspace{10pt}(e1) \scriptsize{HTVUM}}
\end{minipage}
\begin{minipage}[htbp]{0.15\linewidth}
\centering
\includegraphics[width=0.45in]{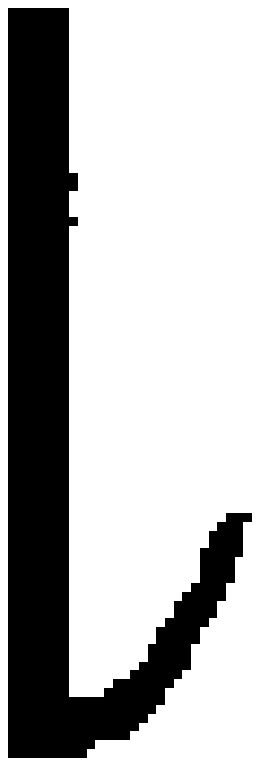}
{\hspace{10pt}(f1) \scriptsize{HTVWM}}
\end{minipage}
\caption{\label{fig43}Comparison of segmentation results  by using differential models for the noisy two-phase image in Example \ref{ex41}.  (a): The noisy image; (a1): The magnified portion of the original image; Row 2 (b1)-(f1): the magnified portion of the images corresponding to Row 1 (b-)-(f). Parameters: (b): $\gamma=200$; (c): $\gamma=0.15$; (d): $\gamma=11$ and $\lambda=0.001$; (e): $\gamma=21.5$ and $\lambda=0.02$; (f): $\gamma=26$ and $\lambda=0.8$.}
\end{figure}
\end{example}

\begin{example}\label{ex42}
The objective of this example is to illustrate the influence of the contaminated level on the segmentation results. We do not investigate the explicit relationship between all levels and the segmentation accuracy in this work. Instead, we empirically selected three class of contamination to show the influence of  segmentation.
As we can see from Example \ref{ex41} that the CVM \cite{5}, the MFPM \cite{130,131} and the PDPM \cite{18}  can not efficiently segment the degraded image, so we only consider other three segmentation schemes based on the two-step strategy.  Table \ref{tab42} demonstrates the chosen parameter $\gamma$ by fixing the other parameter $\lambda$ in models and the segmentation accuracy (SA) by using Figure \ref{fig40}(b) as the testing image. Our proposed constrained model can obviously improve segmentation results upon the HTVUM and the TSMSM especially at high noise levels and blurring effects. Actually, it is due to that the information of edges and constraint in our proposed model can efficiently suppress  noise and preserve edges. Simultaneously, we find out that the parameter $\gamma$ increases with the degraded level. Actually, this is because of that we need to increase the weighted values for penalising the regularization terms in models when the image contamination increases.
\begin{table}[htbp]\label{tab42}
\centering
\begin{tabular}{|c||c|c||c|c||c|c|}
\hline {} &\multicolumn{2}{c||}{$\sigma=0.005$} &\multicolumn{2}{c||}{$\sigma=0.01$}&\multicolumn{2}{c|} {$\sigma=0.02$} \\
\cline{2-7} \raisebox{0.9ex}{Variables}&$\gamma/\lambda$&$SA(\%)$&$\gamma/\lambda$&$SA(\%)$&$\gamma/\lambda$& $SA(\%)$ \\
\hline TSMSM&18/0.005&0.79&11/0.005&0.13&6.0/0.005&\textbf{0.29}\\
\hline HTVUM&17/0.5&0.87&9.0/0.5&0.14&6.2/0.5&0.33\\
\hline HTVWM&7.0/0.35&\textbf{0.78}&4.0/0.35&\textbf{0.13}&2.3/0.35&\textbf{0.29}\\
\hline {} &\multicolumn{2}{c||}{$(G,5,5)$/$\sigma=0.01$} &\multicolumn{2}{c||}{$(G,7,7)$/$\sigma=0.02$}&\multicolumn{2}{c|} {$(G,9,9)$/$\sigma=0.03$} \\
\cline{2-7} \raisebox{0.9ex}{Variables}&$\gamma/\lambda$&$SA(\%)$&$\gamma/\lambda$&$SA(\%)$&$\gamma/\lambda$& $SA(\%)$ \\
\hline TSMSM&13/0.02&0.55&8.9/0.02&0.92&4.5/0.02&1.30\\
\hline HTVUM&41/0.15&0.77&34/0.15&1.29&24/0.15&1.70\\
\hline HTVWM&13/0.6&\textbf{0.54}&7.0/0.6&\textbf{0.87}&3.5/0.6&\textbf{1.14}\\
\hline {}&\multicolumn{2}{c||}{$(M,5,10)$/$\sigma=0.01$} &\multicolumn{2}{c||}{$(M,10,20)$/$\gamma/\sigma=0.02$}&\multicolumn{2}{c|} {$(M,15,30)$/$\sigma=0.03$} \\
\cline{2-7} \raisebox{0.9ex}{Variables}&$\gamma/\lambda$&$SA(\%)$&$\gamma/\lambda$&$SA(\%)$&$\gamma/\lambda$& $SA(\%)$ \\
\hline TSMSM&11/0.02&\textbf{0.39}&5.0/0.02&1.0&2.8/0.02&1.24\\
\hline HTVUM&19/0.15&0.48&19/0.15&1.36&14/0.15&1.76\\
\hline HTVWM&12/0.6&\textbf{0.39}&5.0/0.6&\textbf{0.96}&3.0/0.6&\textbf{1.14}\\
\hline
\end{tabular}
\caption{The related data in Example \ref{ex42}.}
\end{table}
\end{example}
\subsection{Inhomogeneous image segmentation}
In this subsection, we extend our proposed method to segment two classes of inhomogeneous images shown in Figure \ref{fig45}. The left is a synthesized image by combining the arterial blood vessels of a human head.  The right is a MRI brain image based on an anatomical model of normal brain from the slice 91 of the normal brain database, which is available to the public at http://www.bic.mni.mcgill.ca/brainweb/. Here we set as ``modality=T1, Slice thickness=1mm, intensity non-uniformity = 20\%" for the original image \ref{fig45}(b). Different to the numerical comparisons of the piecewise constant image, we do not know the real segmentation due to the inhomogeneity. So we except to obtain a better restored image as the stopping condition in the first step of our proposed strategy.

\begin{figure}[h!]
\centering
\begin{minipage}[htbp]{0.35\linewidth}
\centering
\includegraphics[width=1.8in]{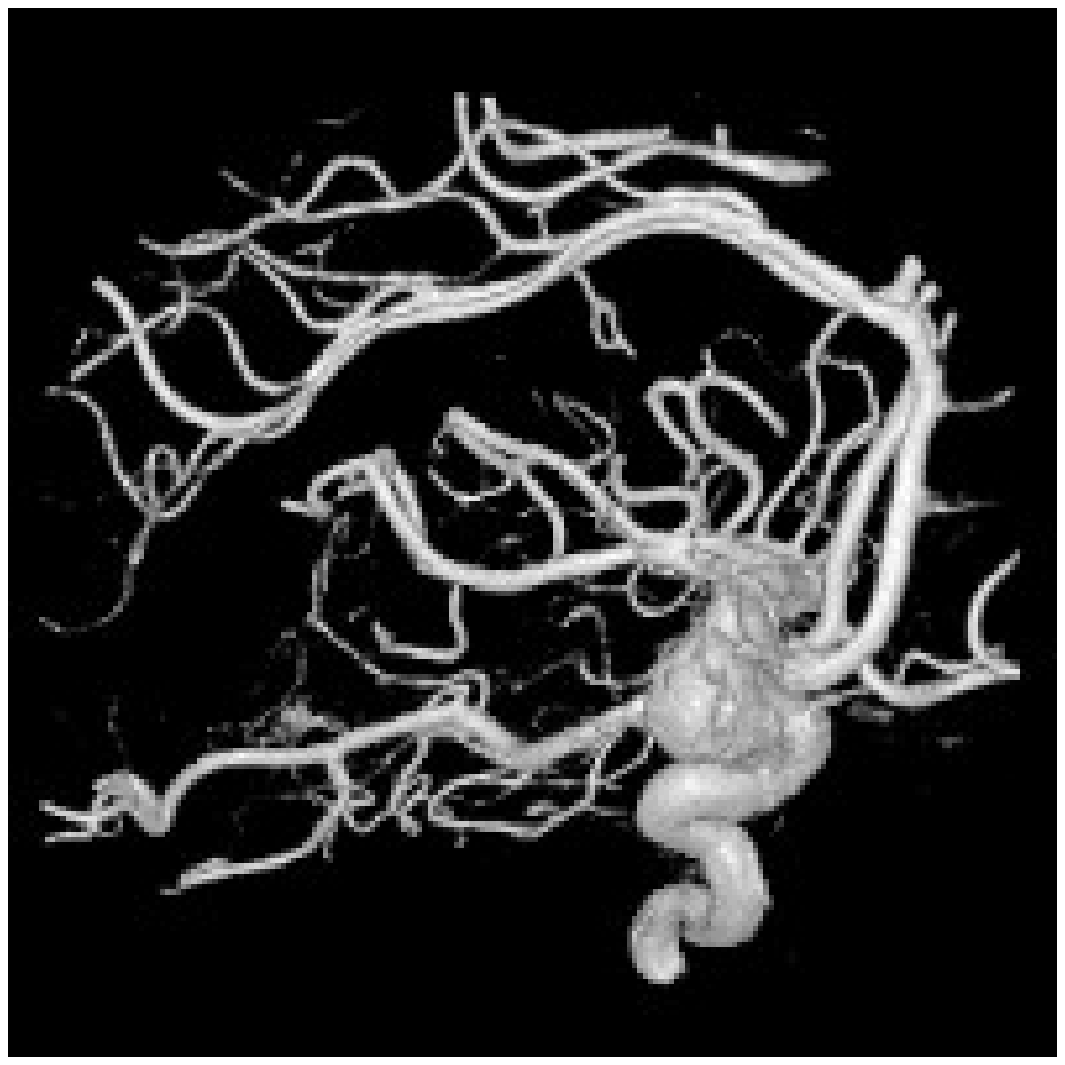}
{(a) Synthesis Image}
\end{minipage}
\begin{minipage}[htbp]{0.35\linewidth}
\centering
\includegraphics[width=1.5in]{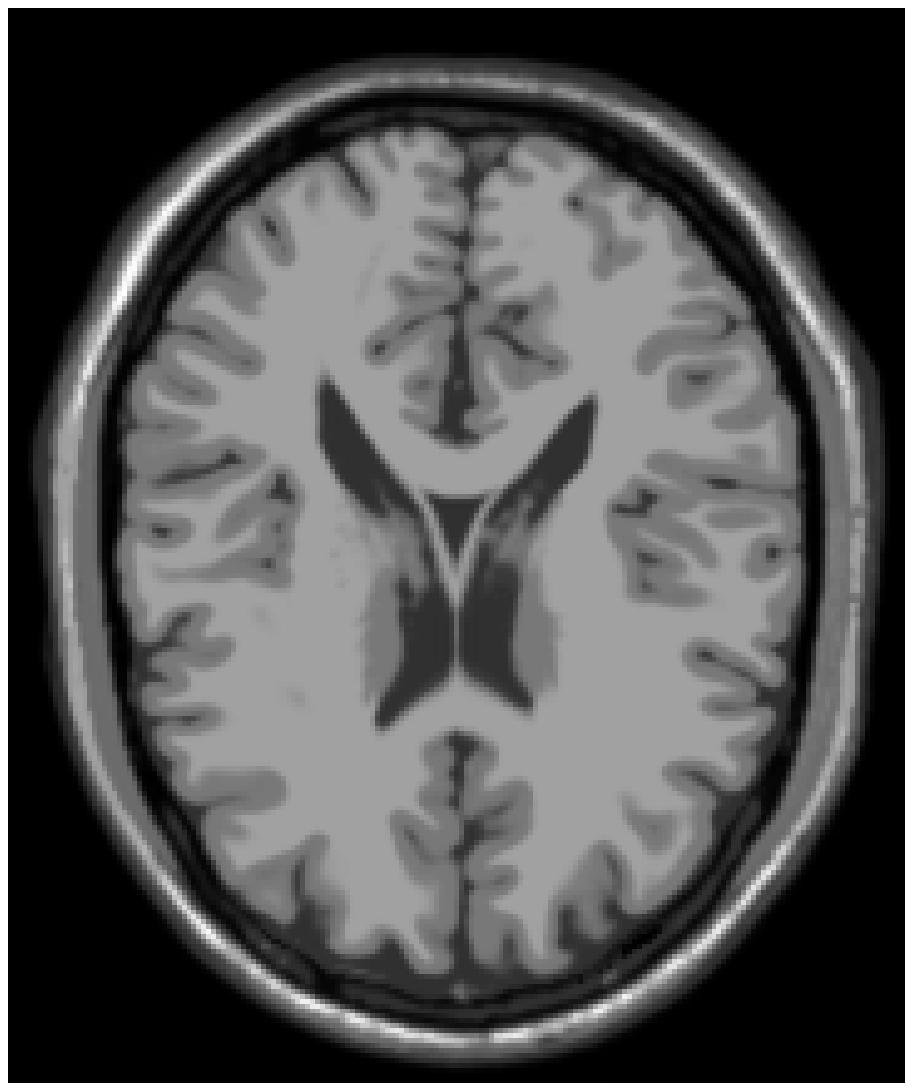}
{\hspace{10pt}(b) MRI Image}
\end{minipage}
\caption{\label{fig45}The original images in Example \ref{ex43} and \ref{ex44}.}
\end{figure}

\begin{example}\label{ex43}
Figure \ref{fig46} presents the applications of our proposed model to segment synthesis degraded images into two phases by using the different methods. It is clear that our proposed method can efficiently segment image, especially in the region of the endings of image. Actually, the proposed method (\ref{31}) can keep the information of edges due to the penalty of the weighted function $w(x)$.  The related parameters and data can be found in the caption of Figure \ref{fig46}.
\begin{figure}[h!]
\centering
\begin{minipage}[htbp]{0.225\linewidth}
\centering
\includegraphics[width=1.15in]{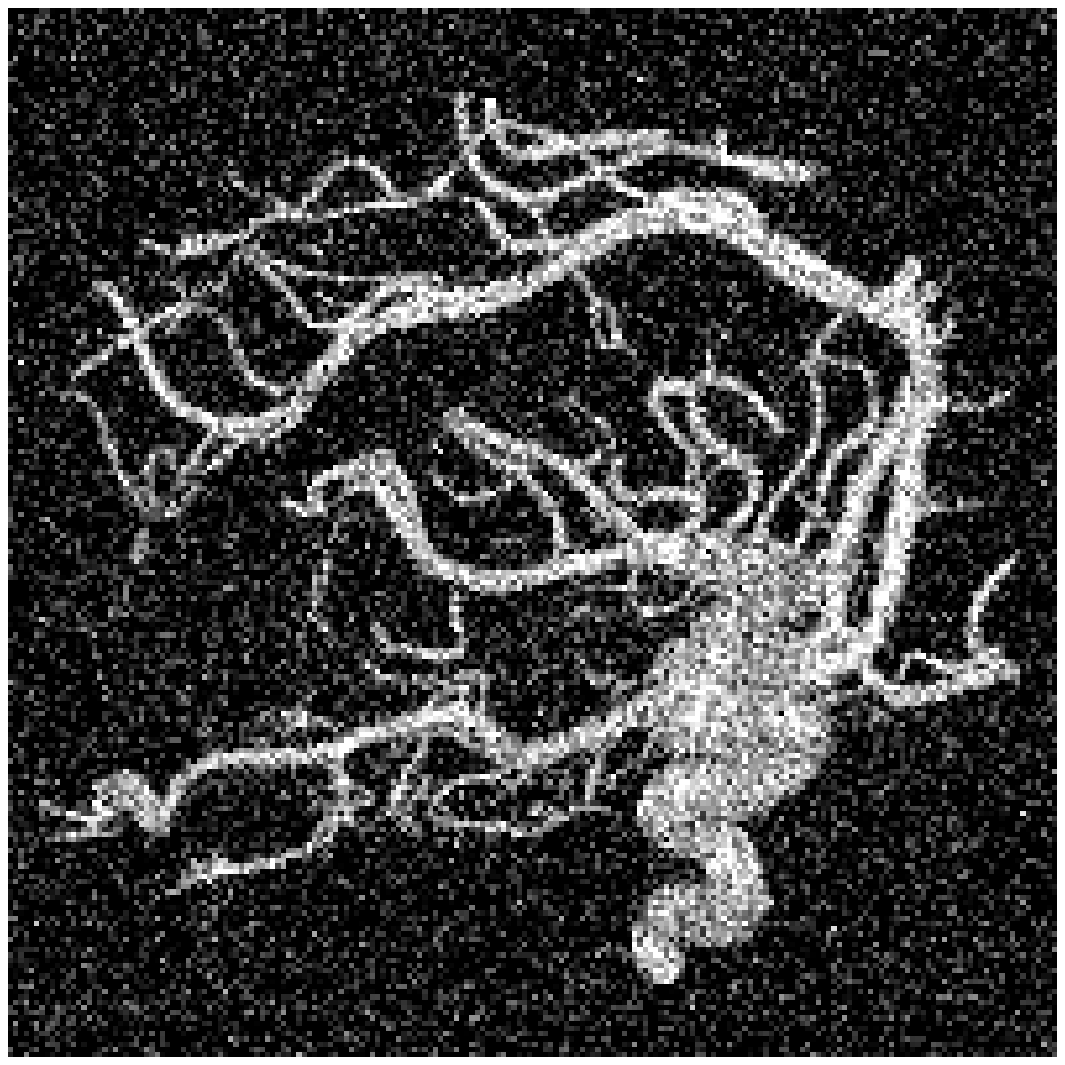}
{(a1)}
\end{minipage}
\begin{minipage}[htbp]{0.225\linewidth}
\centering
\includegraphics[width=1.15in]{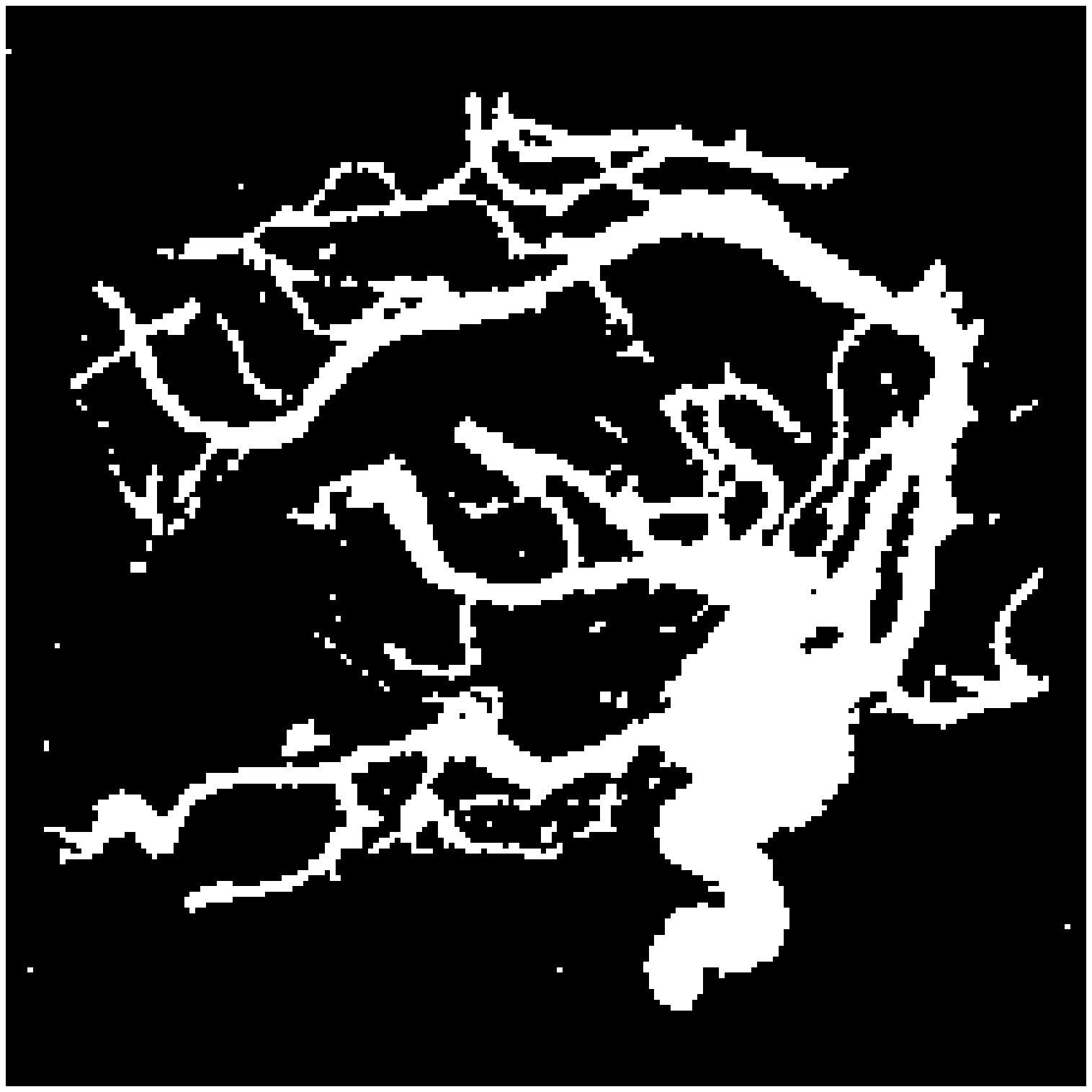}
{(b1) TSMSM}
\end{minipage}
\begin{minipage}[htbp]{0.225\linewidth}
\centering
\includegraphics[width=1.15in]{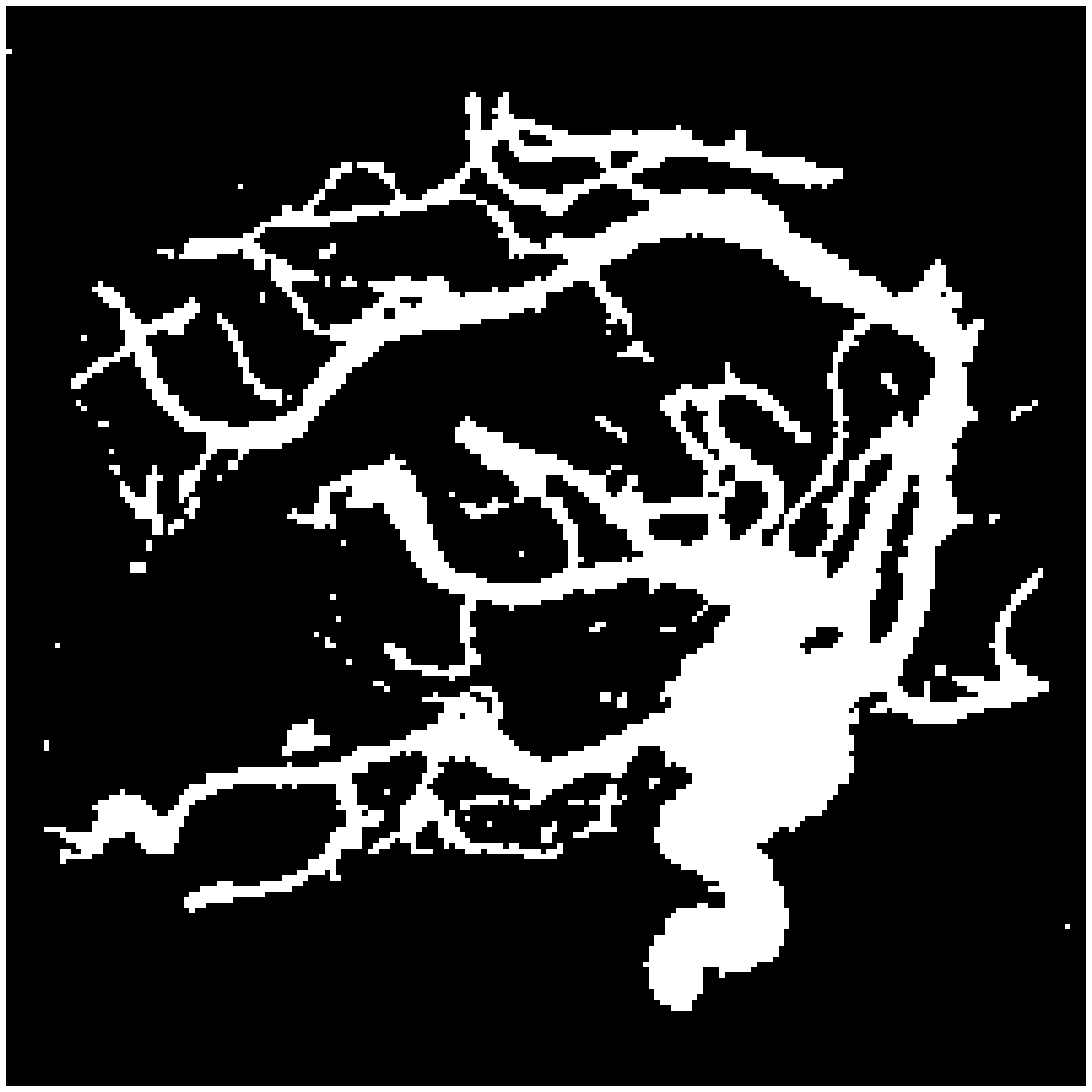}
{(c1) HEVUM}
\end{minipage}
\begin{minipage}[htbp]{0.225\linewidth}
\centering
\includegraphics[width=1.15in]{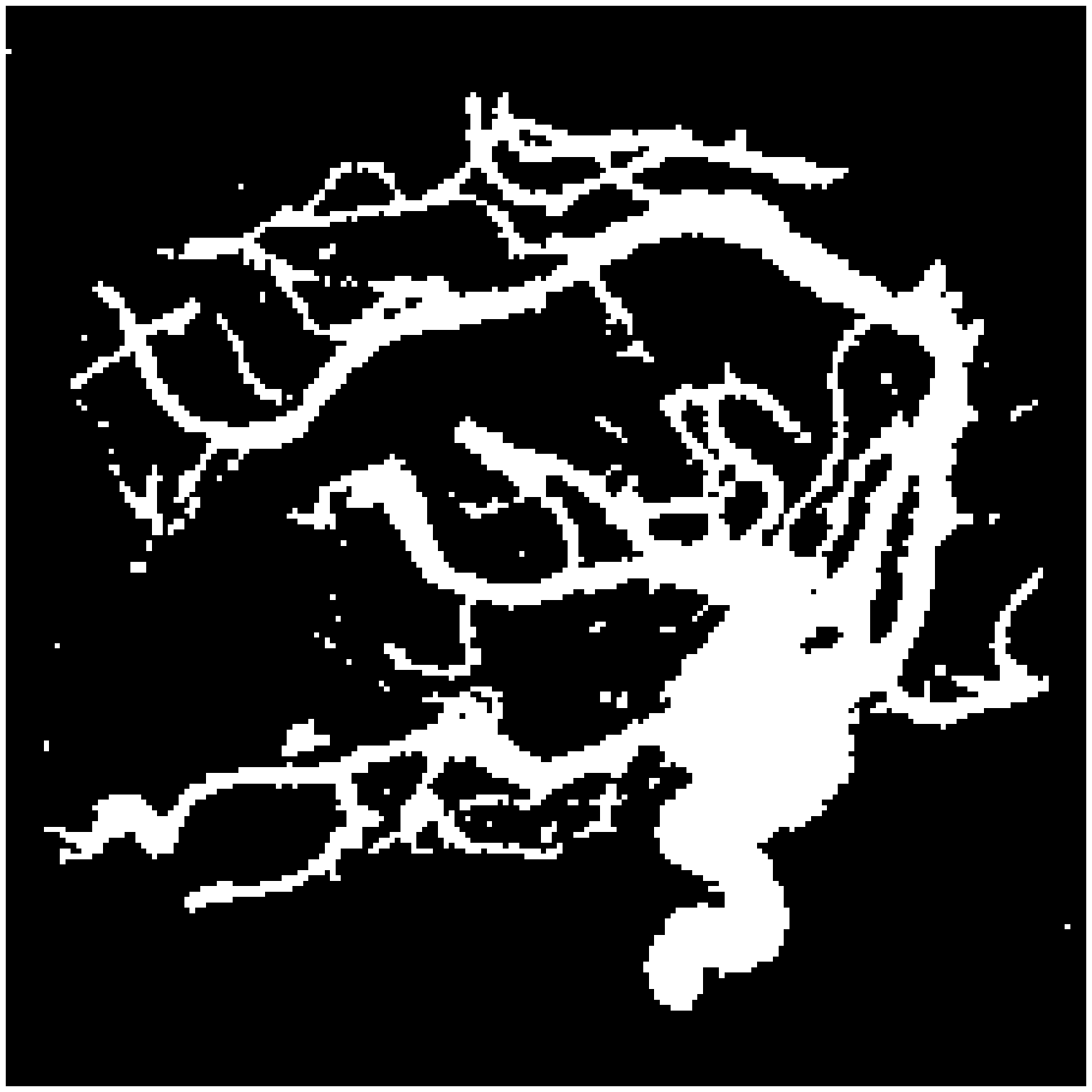}
{(d1) HTVWM}
\end{minipage}\\
\begin{minipage}[htbp]{0.225\linewidth}
\centering
\includegraphics[width=1.15in]{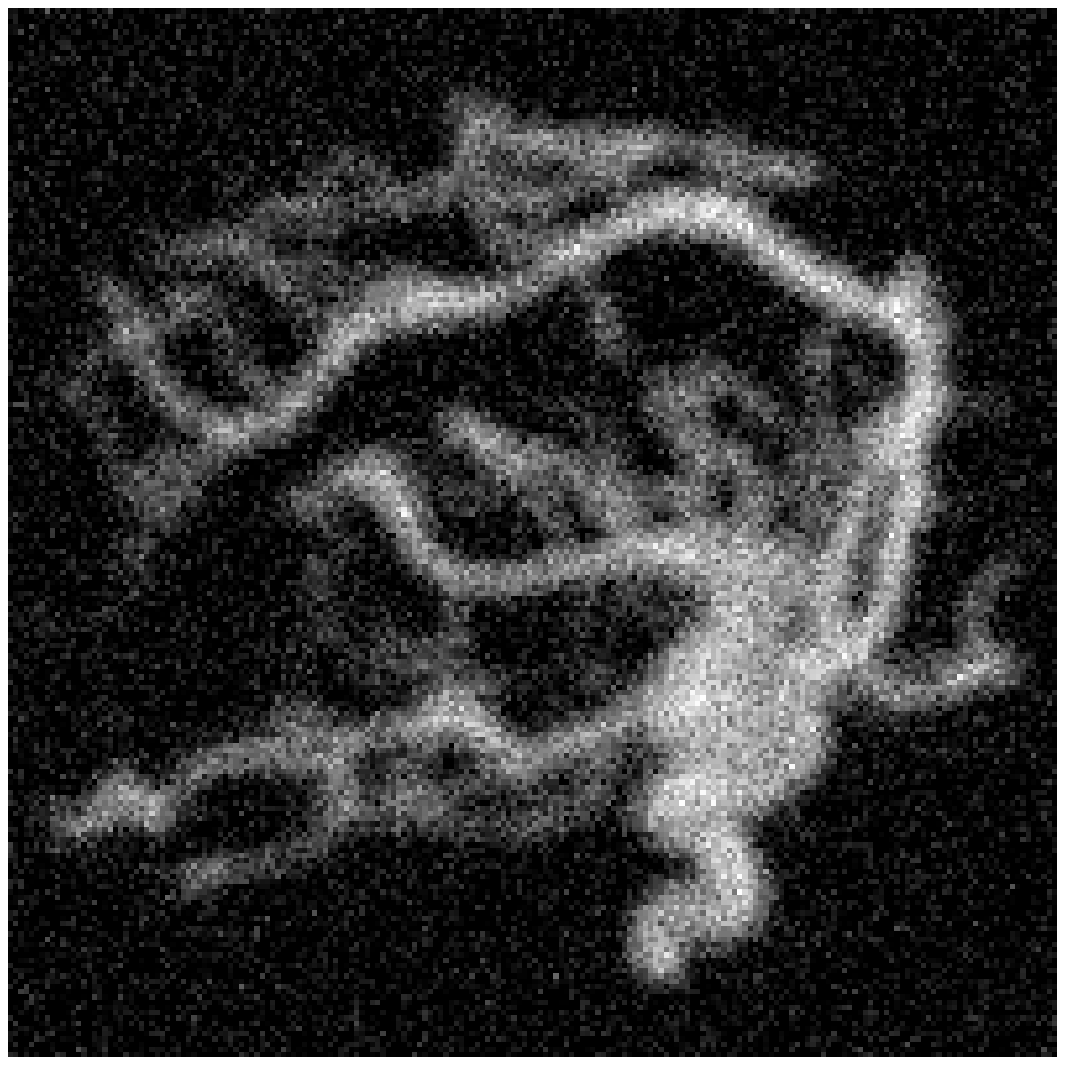}
{(a2) }
\end{minipage}
\begin{minipage}[htbp]{0.225\linewidth}
\centering
\includegraphics[width=1.15in]{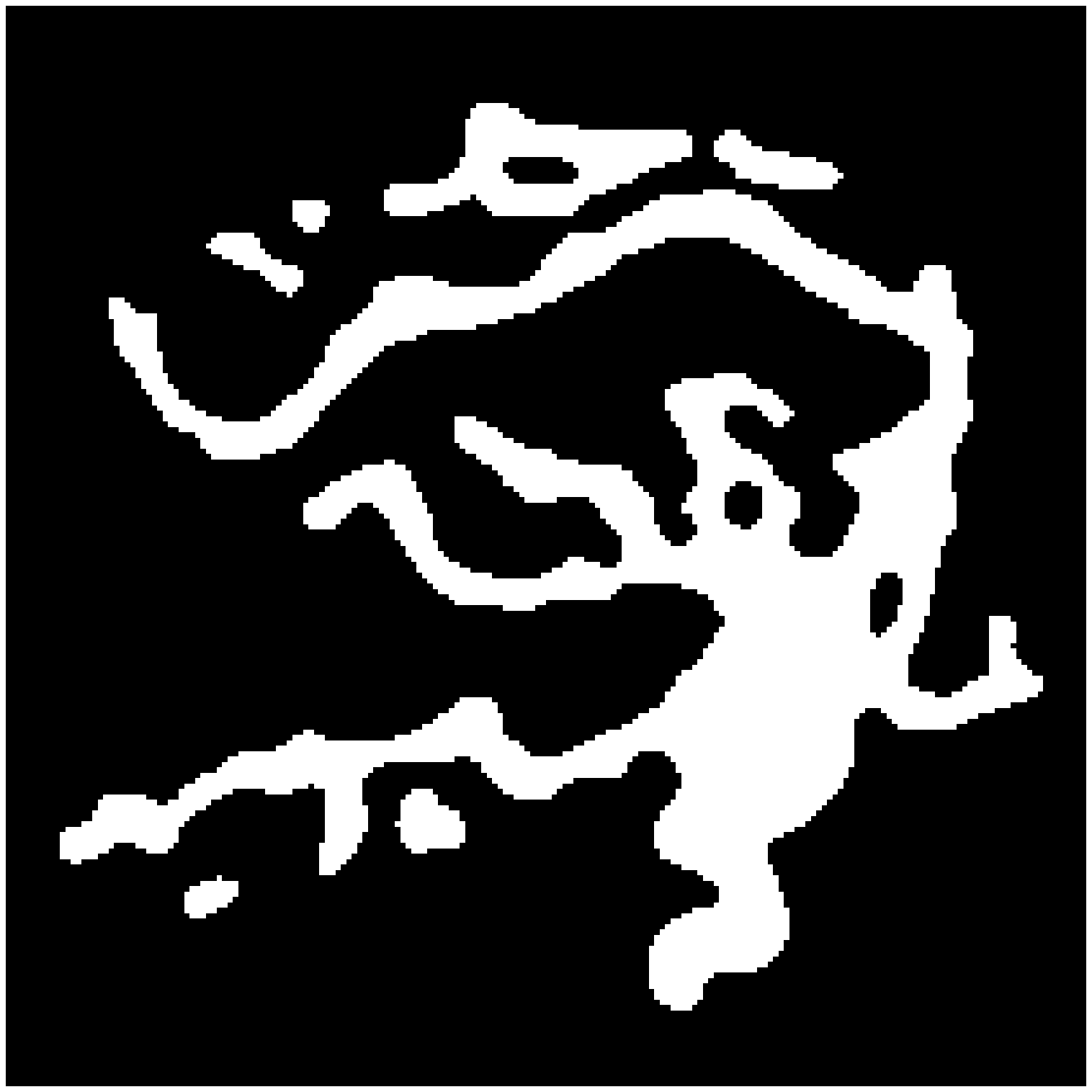}
{(b2) TSMSM}
\end{minipage}
\begin{minipage}[htbp]{0.225\linewidth}
\centering
\includegraphics[width=1.15in]{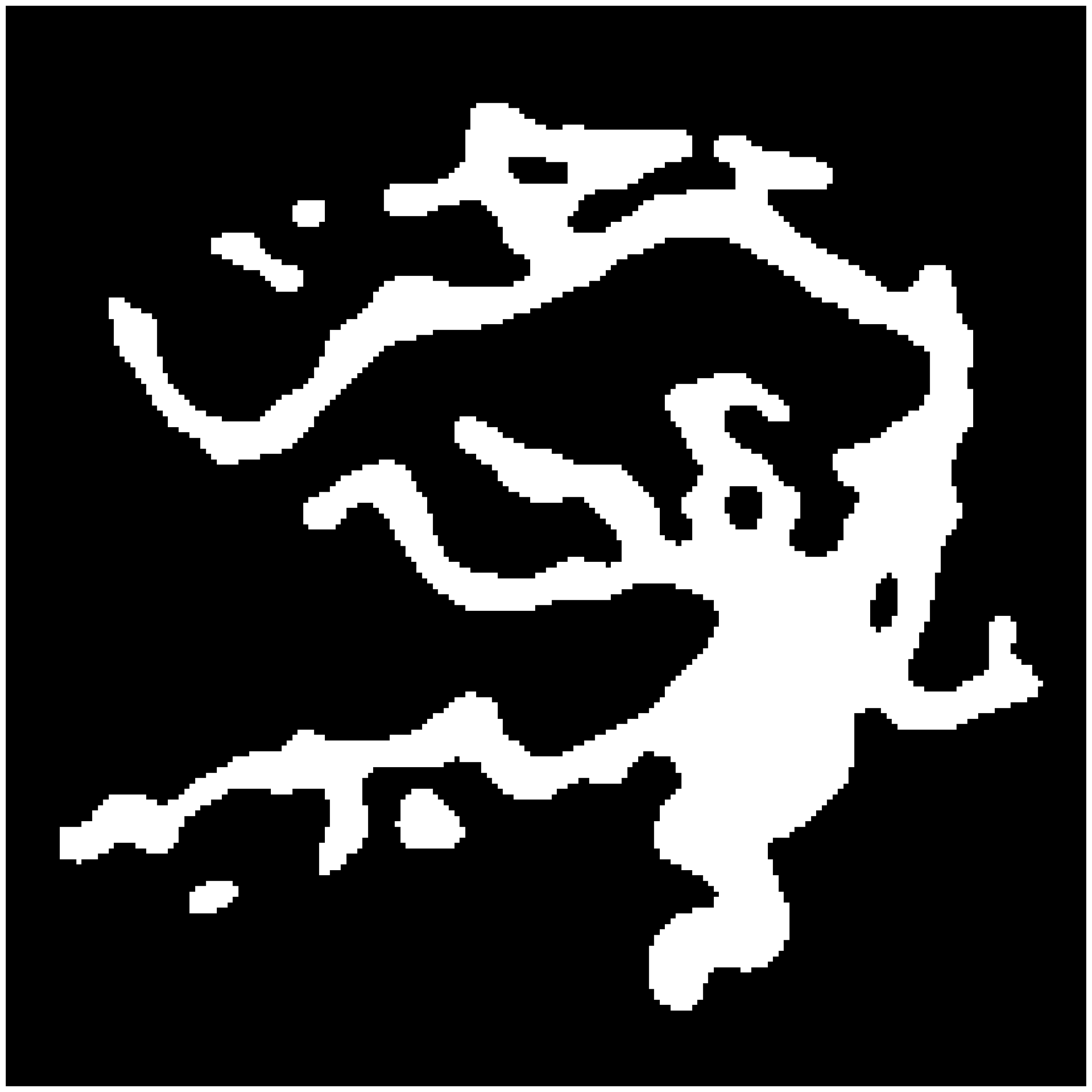}
{(c2) HTVUM}
\end{minipage}
\begin{minipage}[htbp]{0.225\linewidth}
\centering
\includegraphics[width=1.15in]{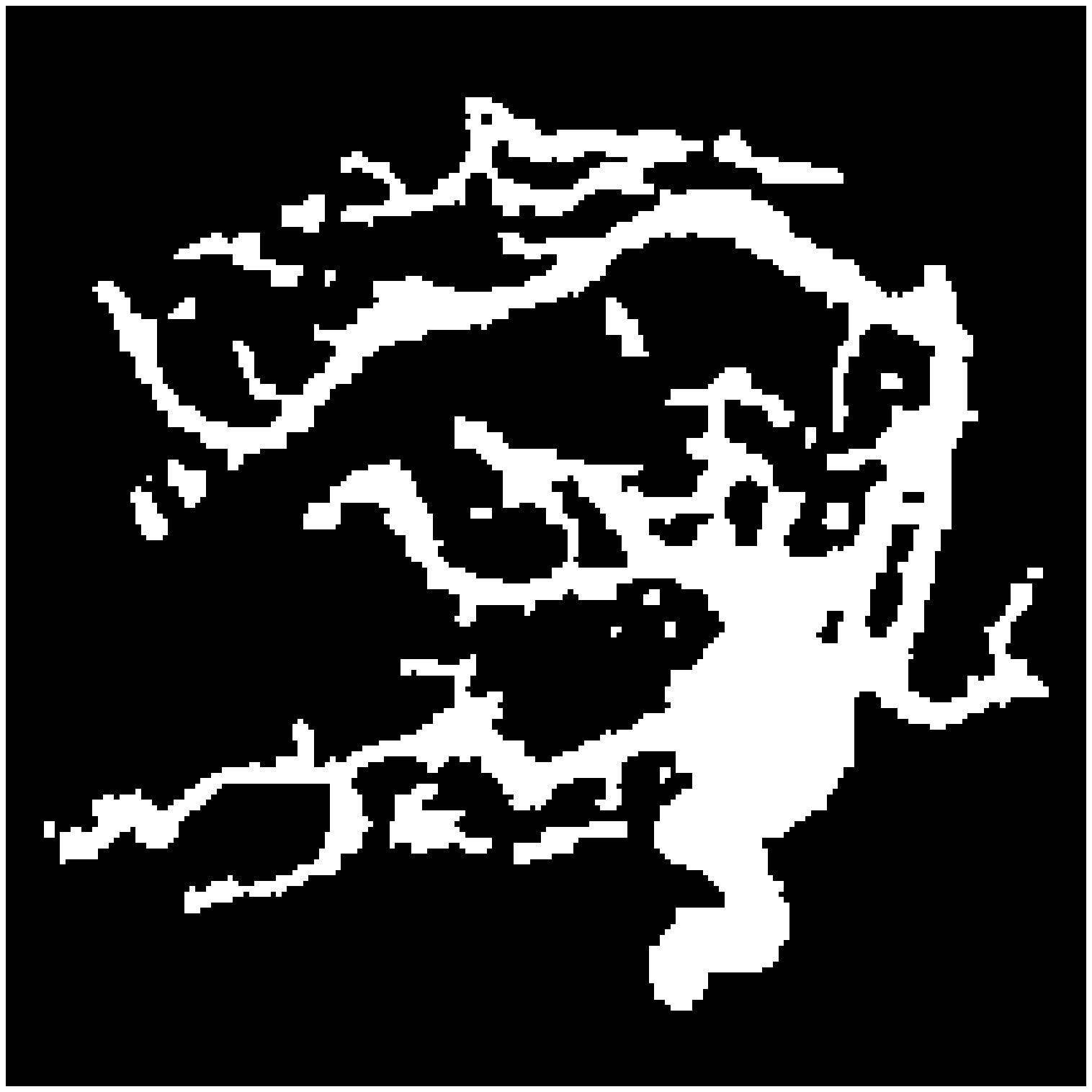}
{(d2) HTVWM}
\end{minipage}\\
\begin{minipage}[htbp]{0.225\linewidth}
\centering
\includegraphics[width=1.15in]{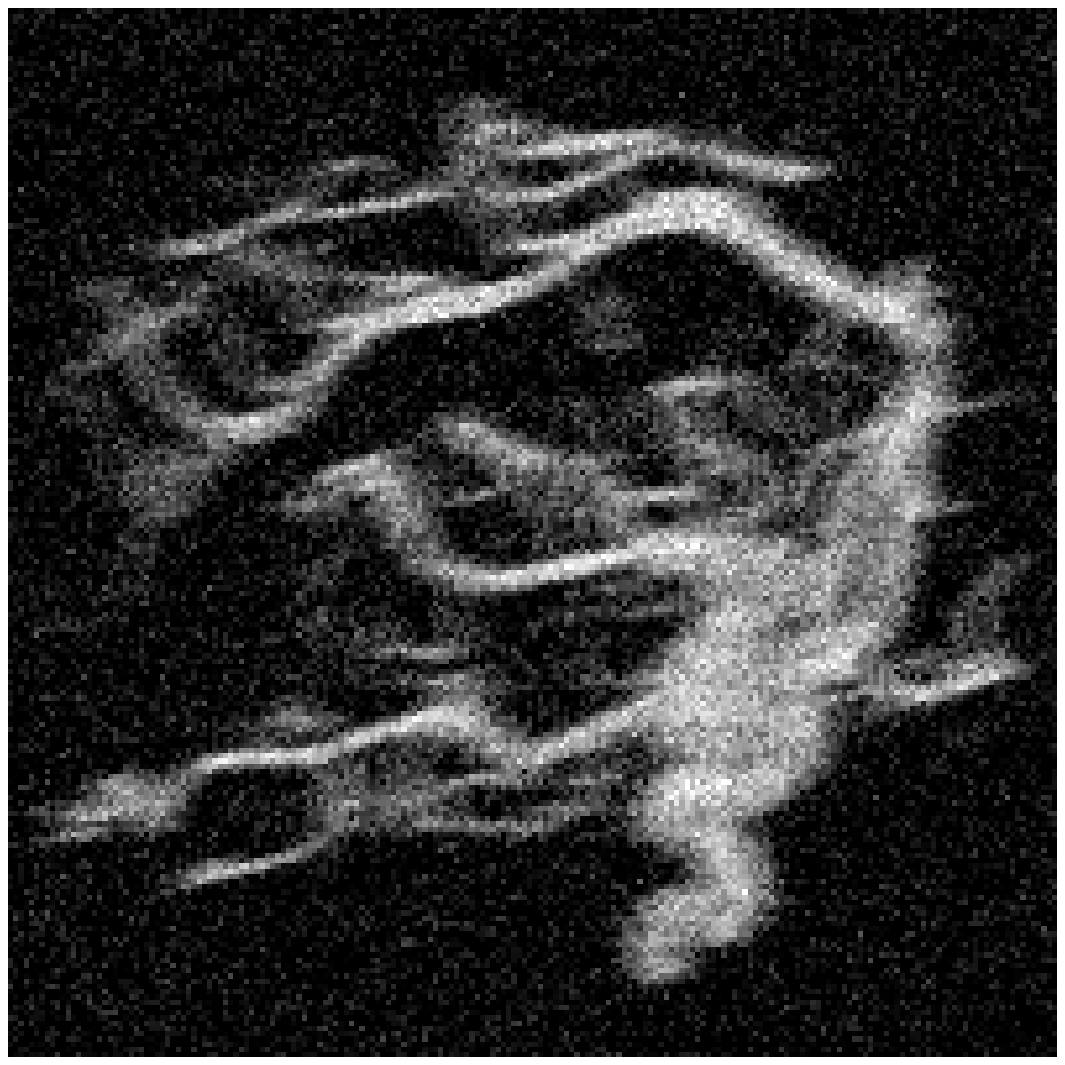}
{(a3) }
\end{minipage}
\begin{minipage}[htbp]{0.225\linewidth}
\centering
\includegraphics[width=1.15in]{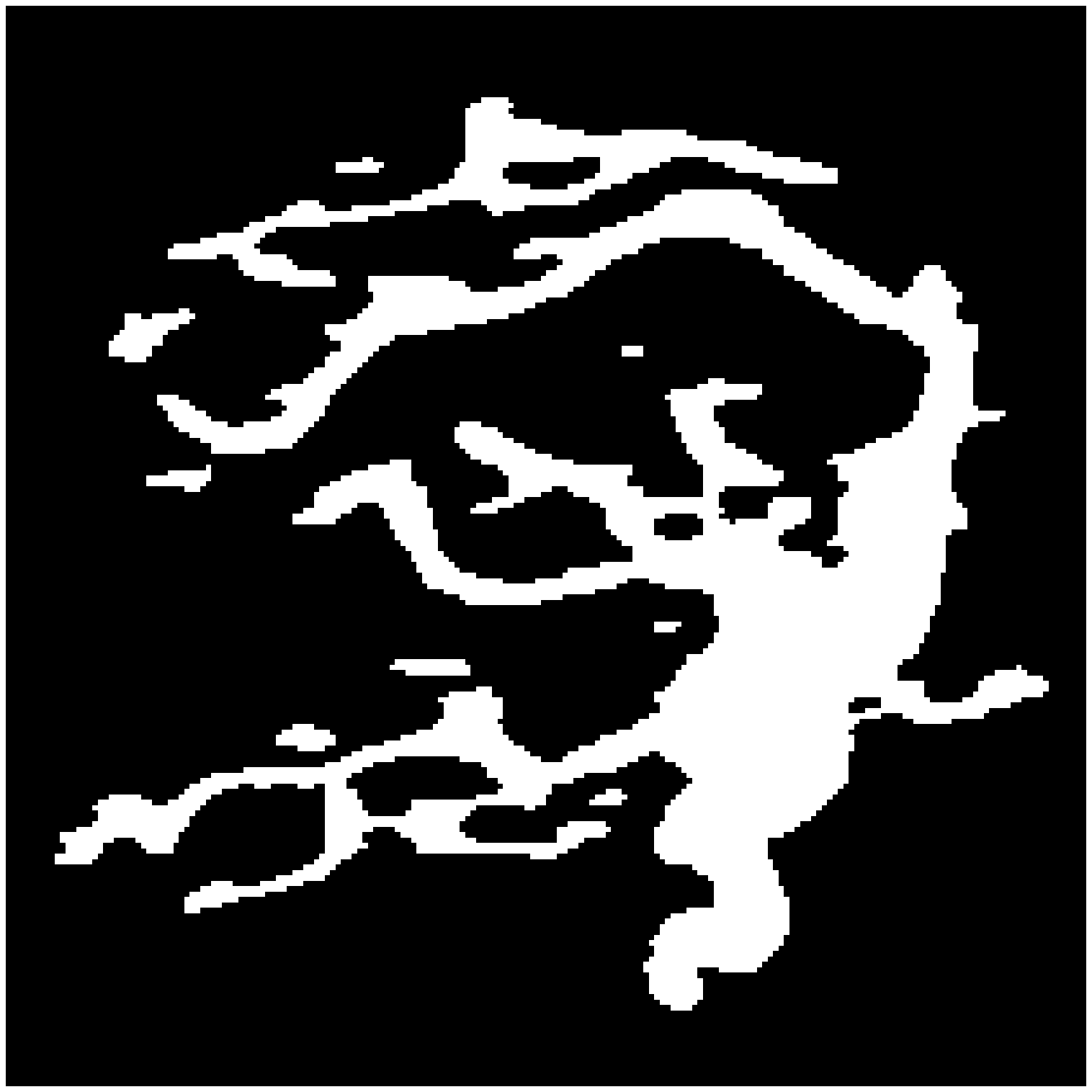}
{(b3) TSMSM}
\end{minipage}
\begin{minipage}[htbp]{0.225\linewidth}
\centering
\includegraphics[width=1.15in]{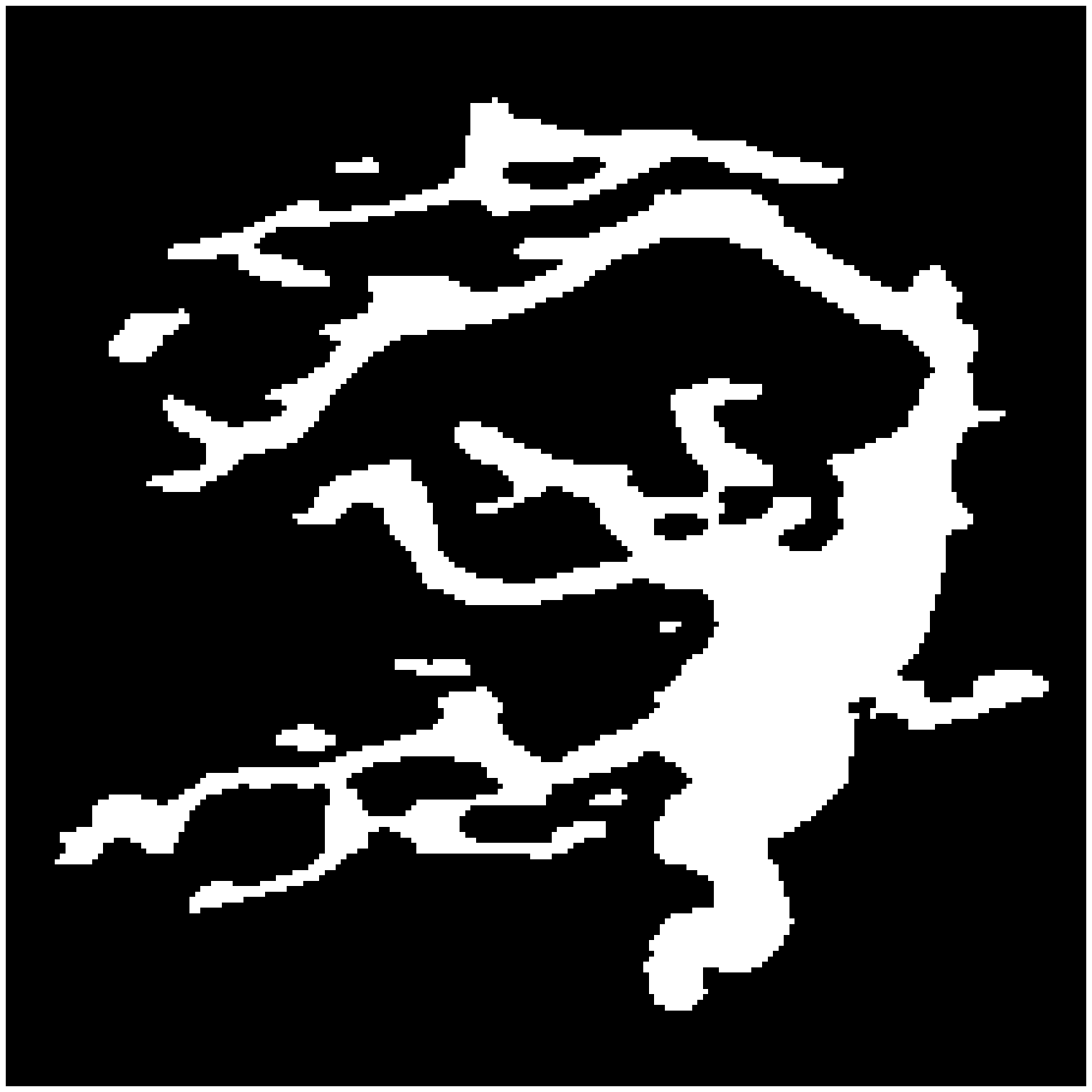}
{(c3) HTVUM}
\end{minipage}
\begin{minipage}[htbp]{0.225\linewidth}
\centering
\includegraphics[width=1.15in]{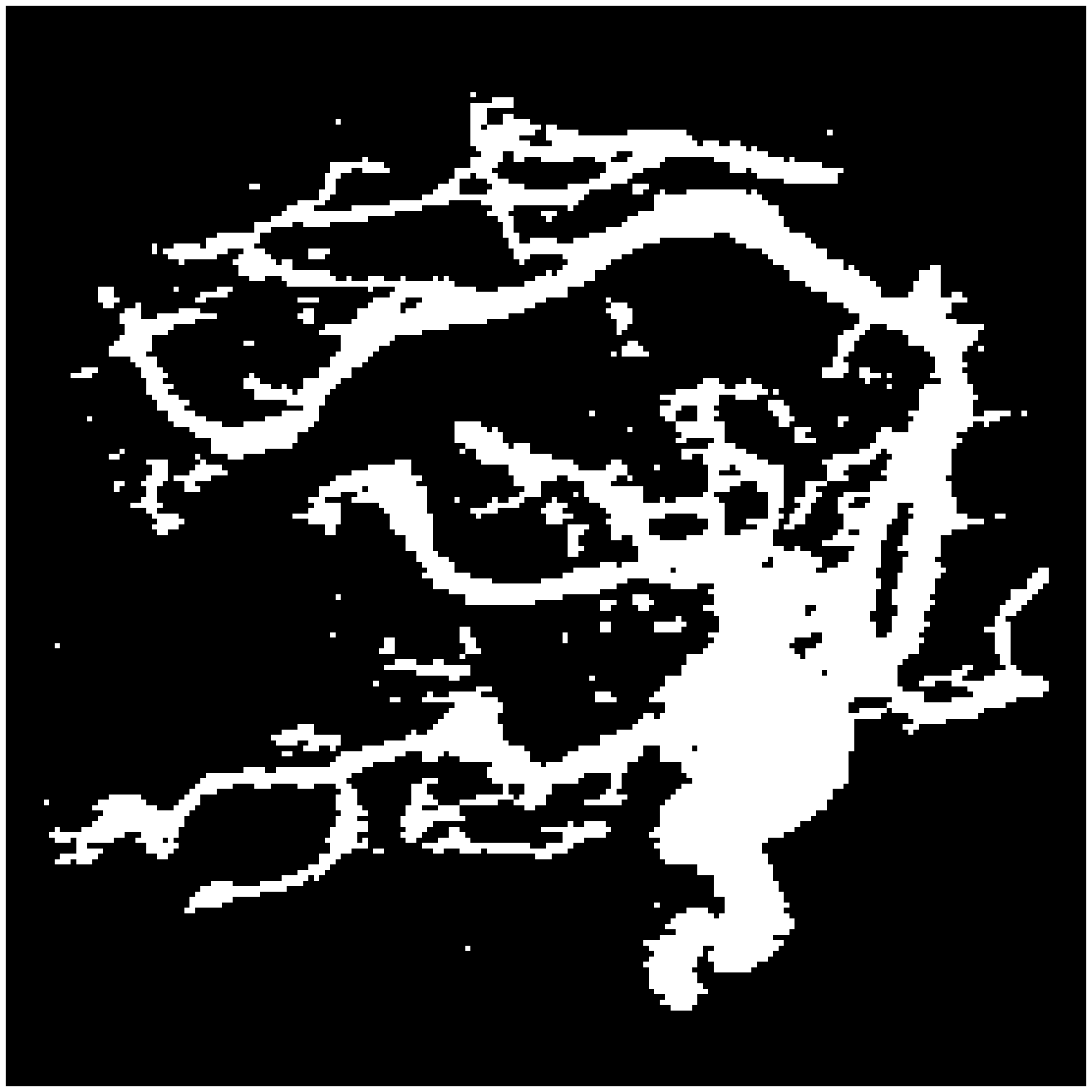}
{(d3) HTVWM}
\end{minipage}
\caption{\label{fig46}Comparison of segmentation results  by using different models for the noisy two-phase image. (a1)-(a3) Contaminated images: (a1) $\sigma=0.05$; (a2) $(G,7,7)$ and $\sigma=0.01$ ); (a3) $(M,10,10)$ and $\sigma=0.01$.  Related parameters in models. (b1) $\gamma=7.7$ and $\lambda=0.6$;  (c1) $\gamma=8$ and $\lambda=0.2$; (d1) $\gamma=7.6$ and $\lambda=0.2$;  (b2) $\gamma=20$ and $\lambda=20$;  (c2) $\gamma=17.3$ and $\lambda=0.003$; (d2) $\gamma=161$ and $\lambda=0.2$;  (b3) $\gamma=24$ and $\lambda=0.01$;  (c3) $\gamma=20$ and $\lambda=0.03$; (d3) $\gamma=147$ and $\lambda=0.0075$. }
\end{figure}
\end{example}

\begin{example}\label{ex44}
We consider our proposed method to segment a four-region  of the real brain MRI images shown in Figure \ref{fig45}, where the regions include  the grey matter, the white matter, the tumor-bone,  and the background region  respectively. The degraded images are shown in Figure \ref{fig470}.
\begin{figure}[h!]
\centering
\begin{minipage}[htbp]{0.30\linewidth}
\centering
\includegraphics[width=1.5in]{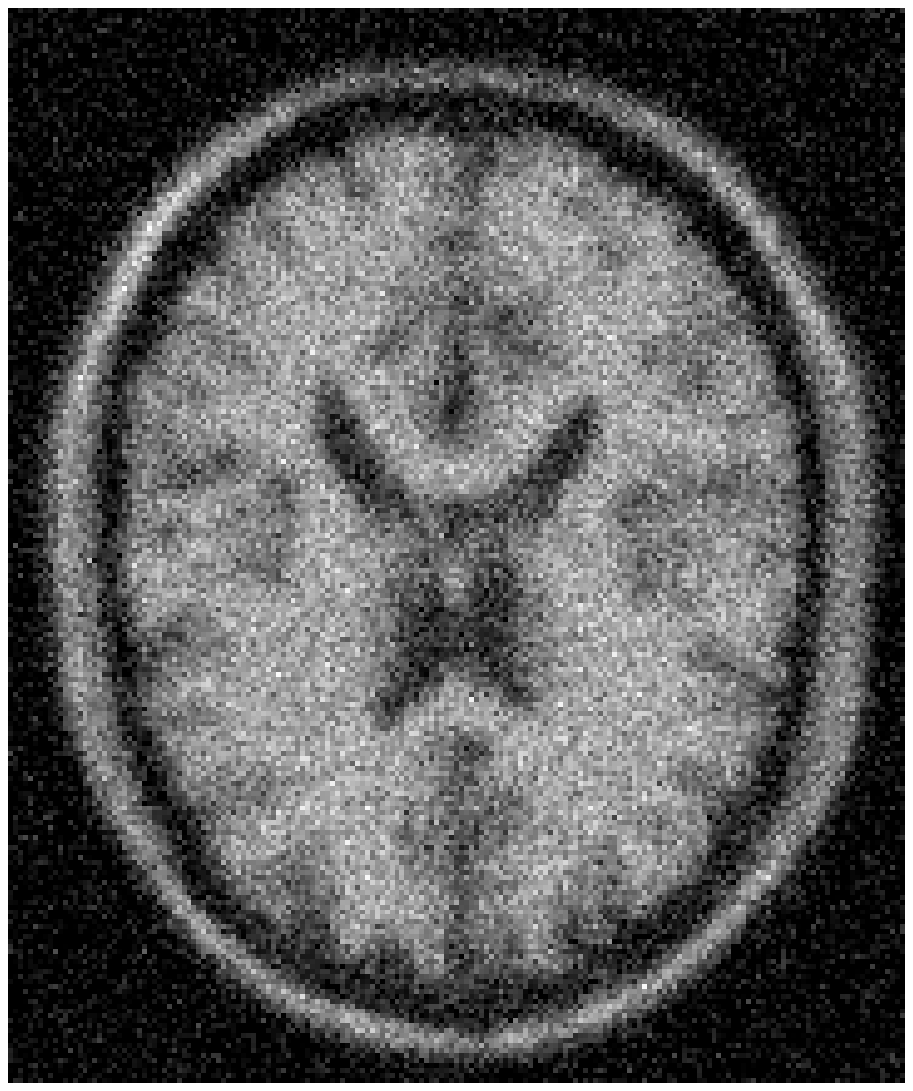}
{(a)Gaussion Blurring}
\end{minipage}
\begin{minipage}[htbp]{0.30\linewidth}
\centering
\includegraphics[width=1.5in]{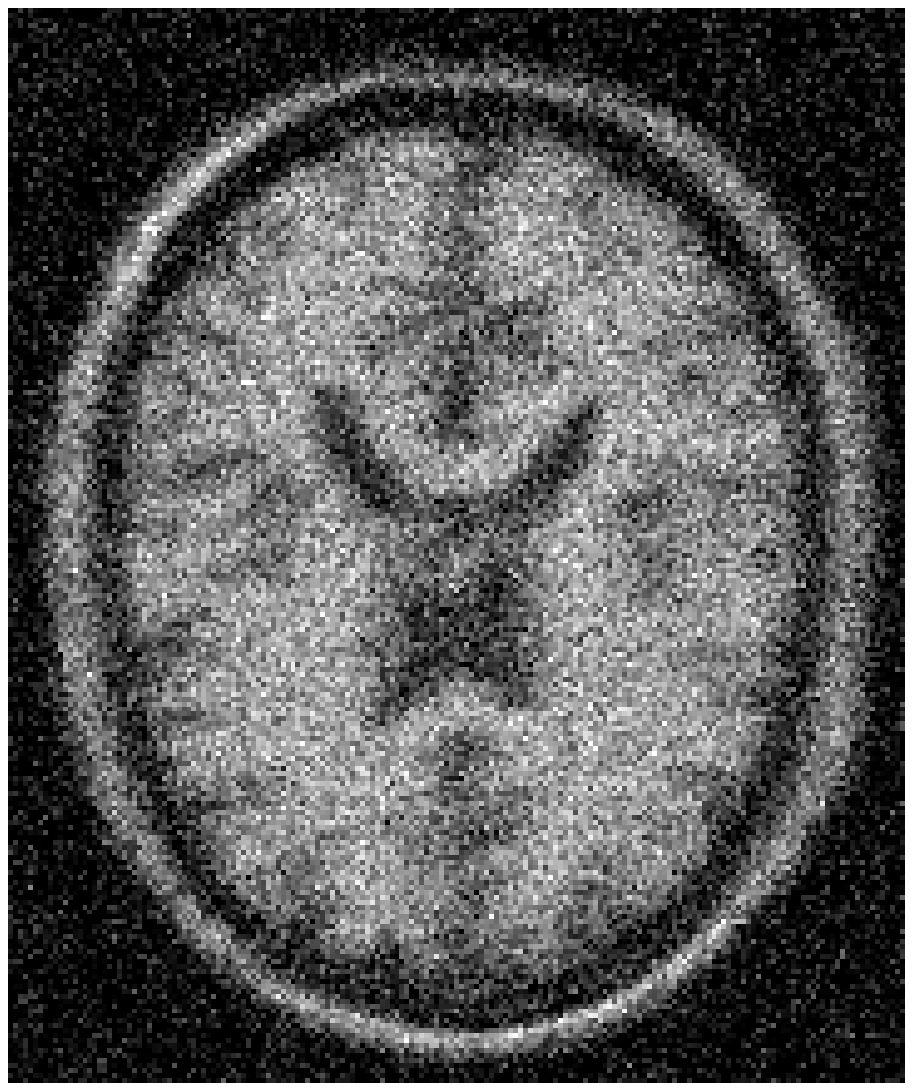}
{(b) Motion Blurring}
\end{minipage}
\caption{\label{fig470}The contaminated image in Example \ref{ex44}. (a) $(G,5,5)$ and $\sigma=0.01$; (b) $(M,7,15)$ and $\sigma=0.02$.}
\end{figure}
The first column of Figure \ref{fig47} and \ref{fig48} is the background showing as the white regions in the first column. Due to the little importance of the background, we essentially treat the problem as a 3-phase problem with the black background ordered in the second to forth column of all our results. The tumor region is ordered in the second column, the grey matter is ordered in the third column, and the white matter is ordered in the forth column. All the images  provide valuable information about the corresponding regions. Obviously,  the HTVWM can efficiently segment the tumor region and the white matter region quite well.

\begin{figure}[h!]
\centering
\begin{minipage}[htbp]{0.215\linewidth}
\centering
\includegraphics[width=1.05in]{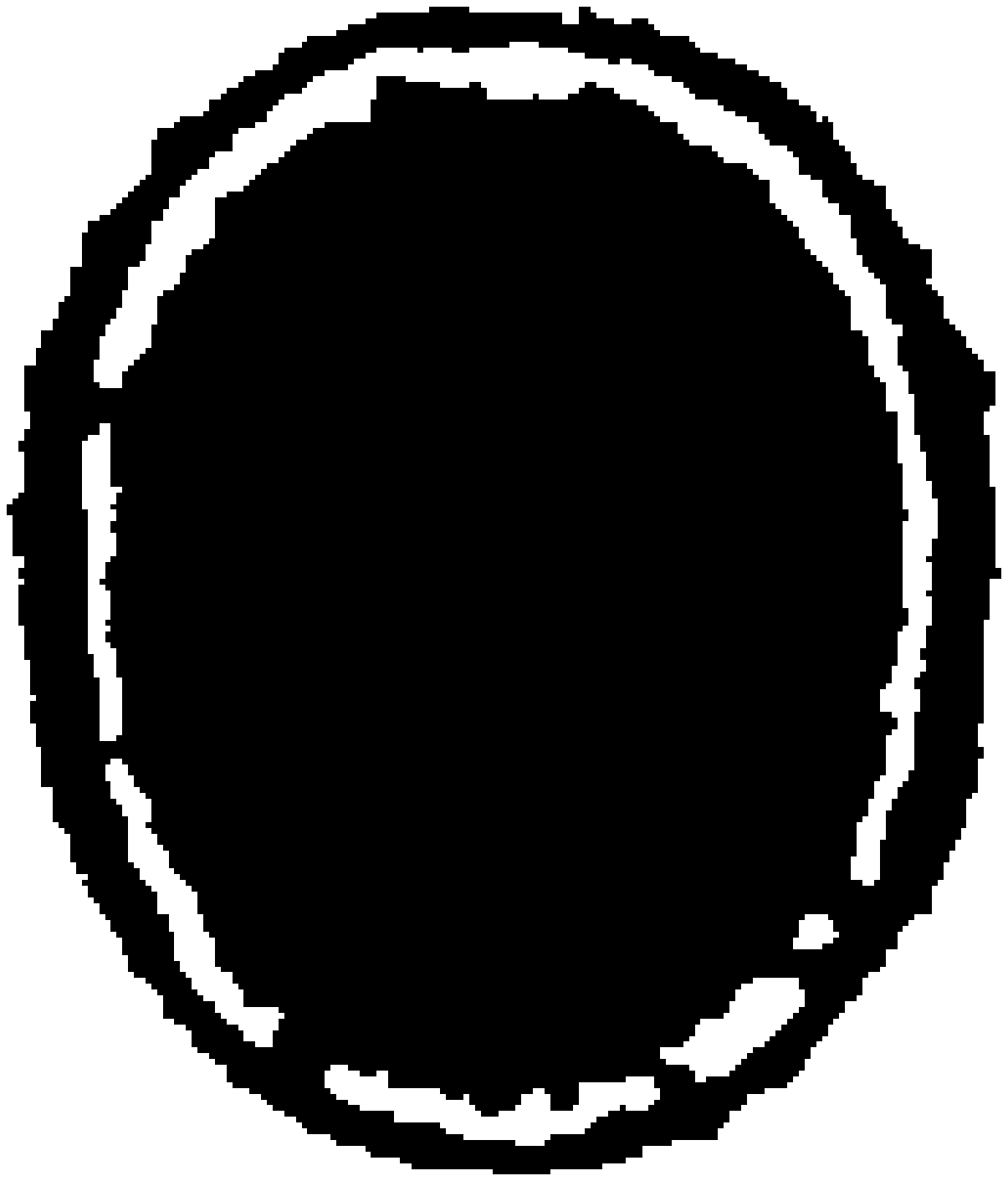}
{(a1) TSMSM}
\end{minipage}
\begin{minipage}[htbp]{0.215\linewidth}
\centering
\includegraphics[width=1.05in]{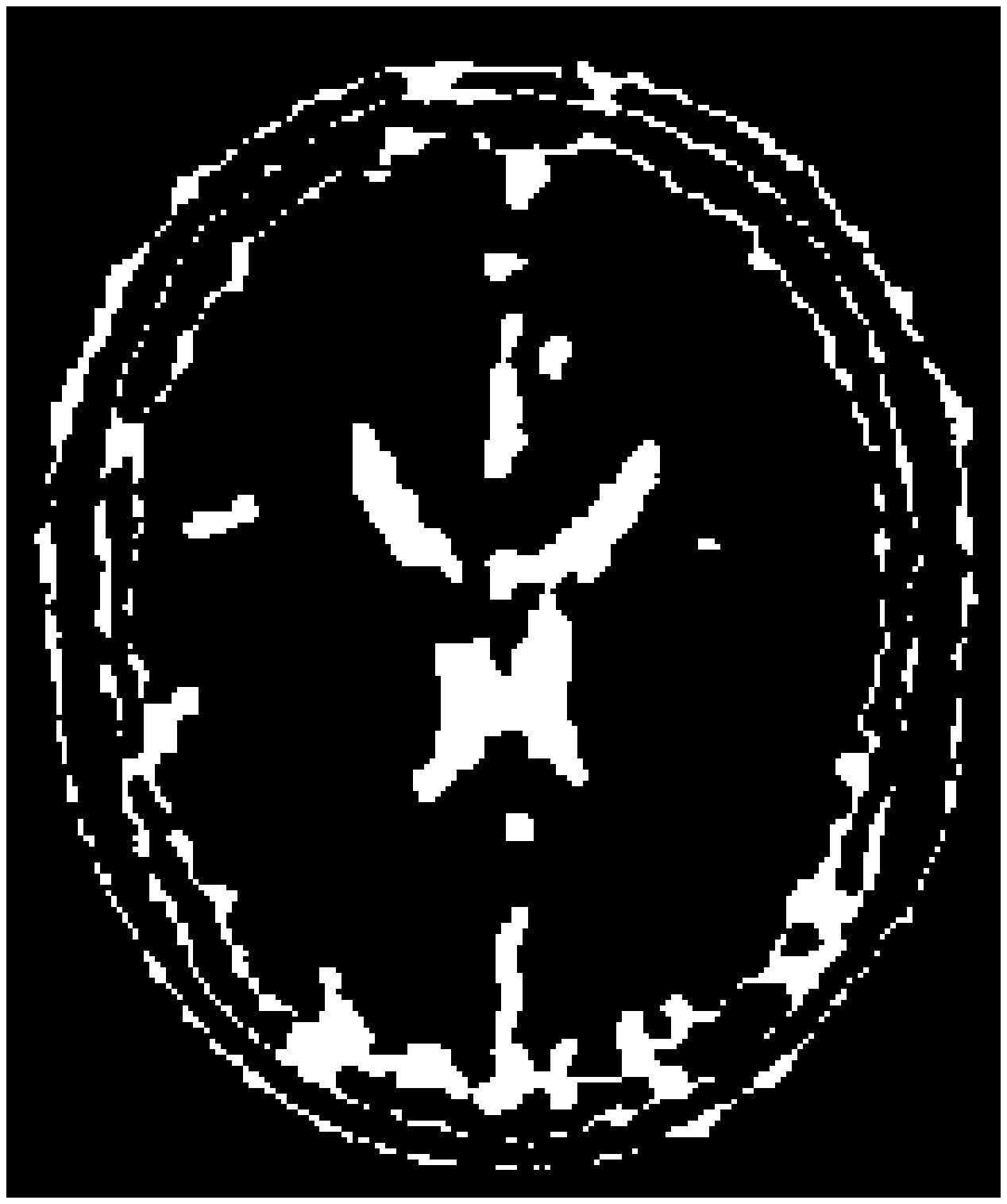}
{(a2) TSMSM}
\end{minipage}
\begin{minipage}[htbp]{0.215\linewidth}
\centering
\includegraphics[width=1.05in]{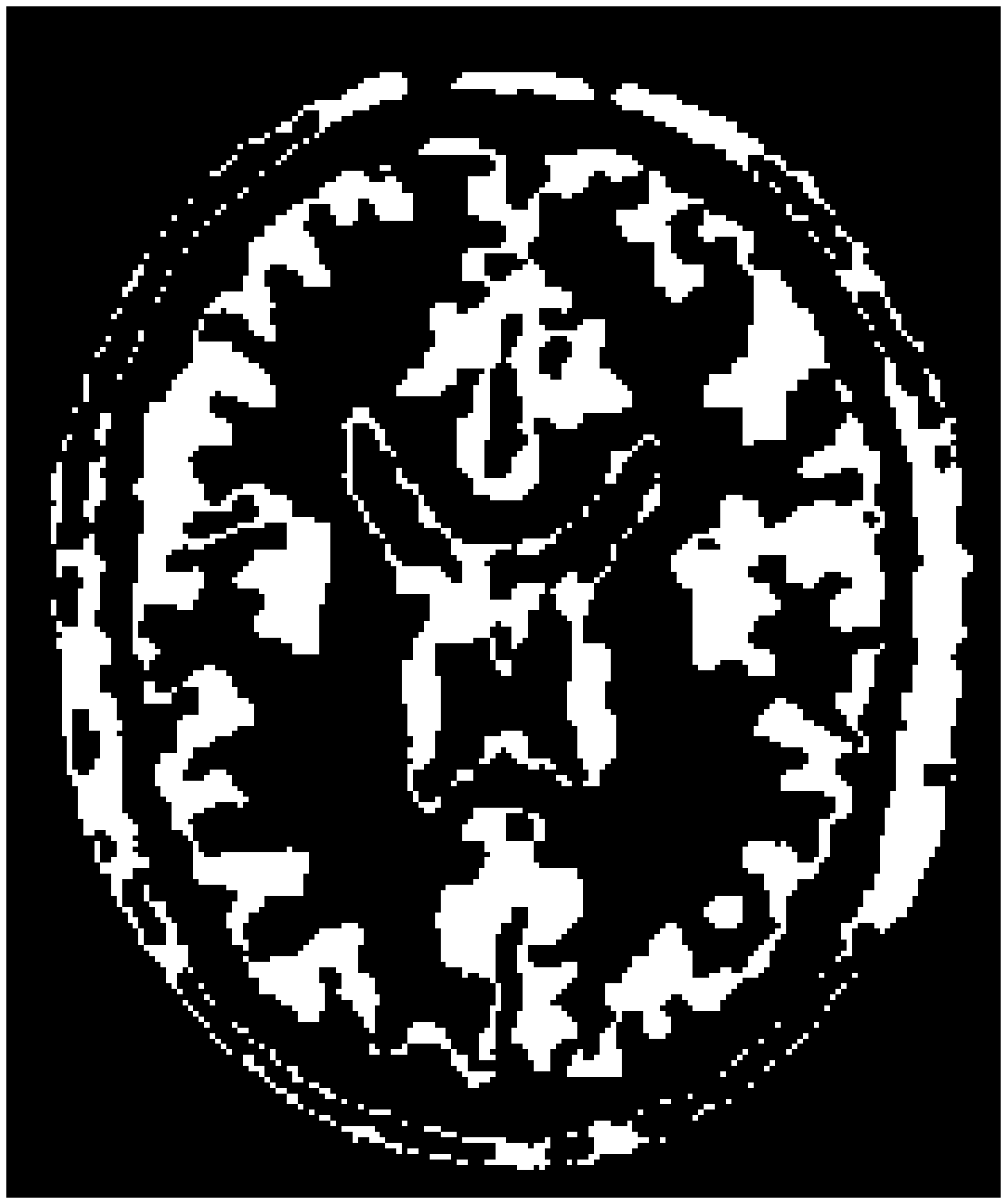}
{(a3)TSMSM}
\end{minipage}
\begin{minipage}[htbp]{0.215\linewidth}
\centering
\includegraphics[width=1.05in]{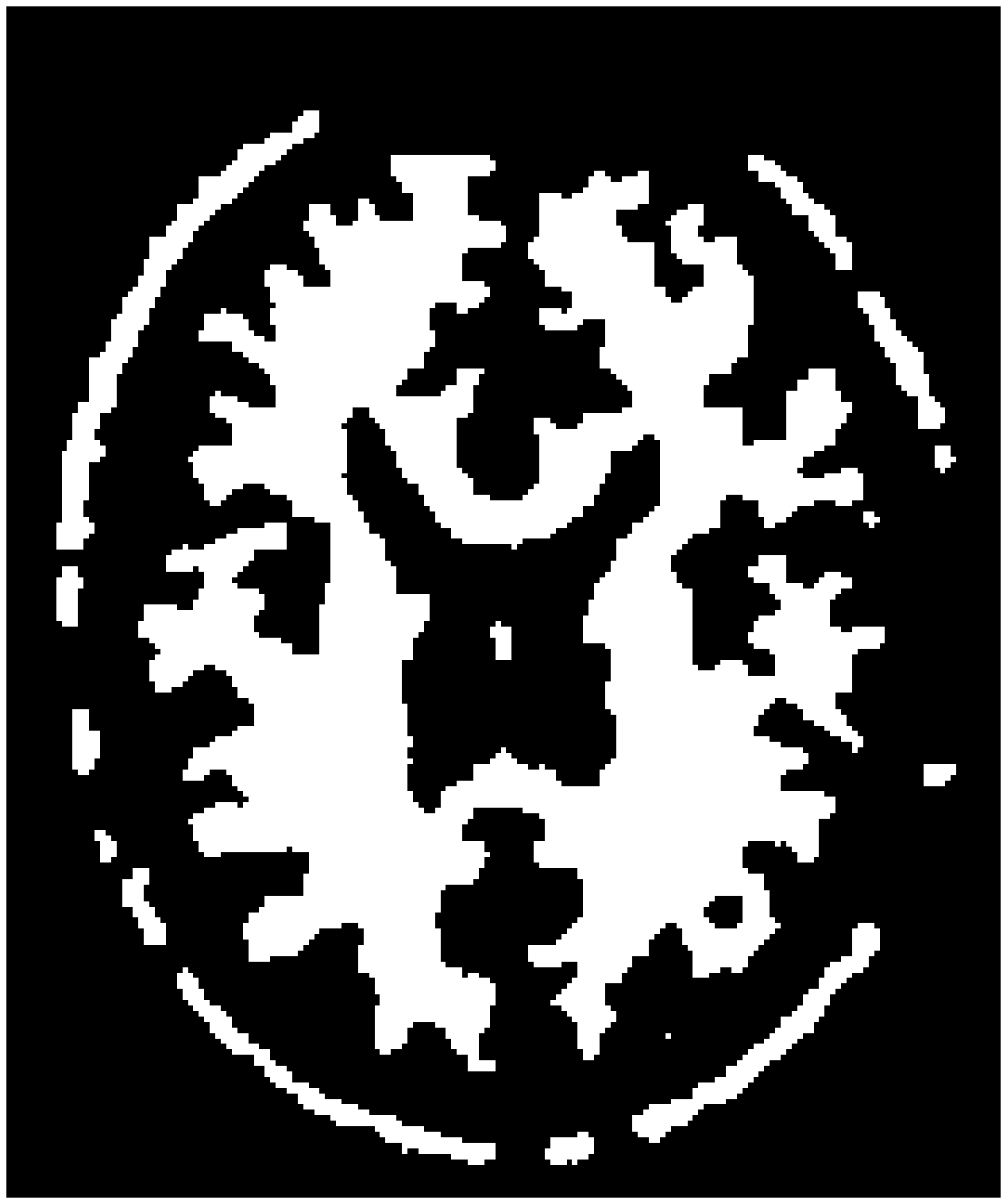}
{(a4) TSMSM}
\end{minipage}\\
\begin{minipage}[htbp]{0.215\linewidth}
\centering
\includegraphics[width=1.05in]{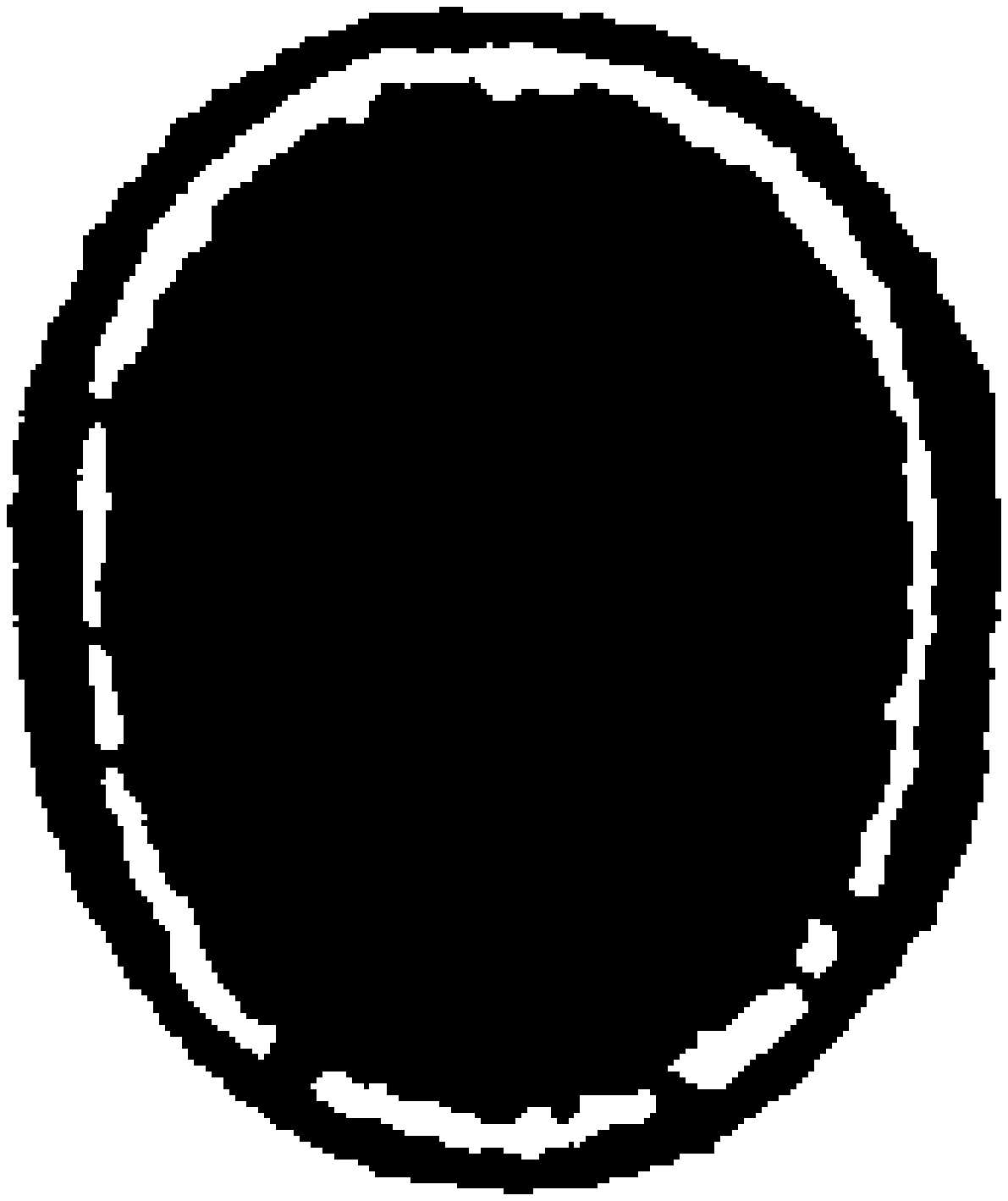}
{(b1) HTVUM}
\end{minipage}
\begin{minipage}[htbp]{0.215\linewidth}
\centering
\includegraphics[width=1.05in]{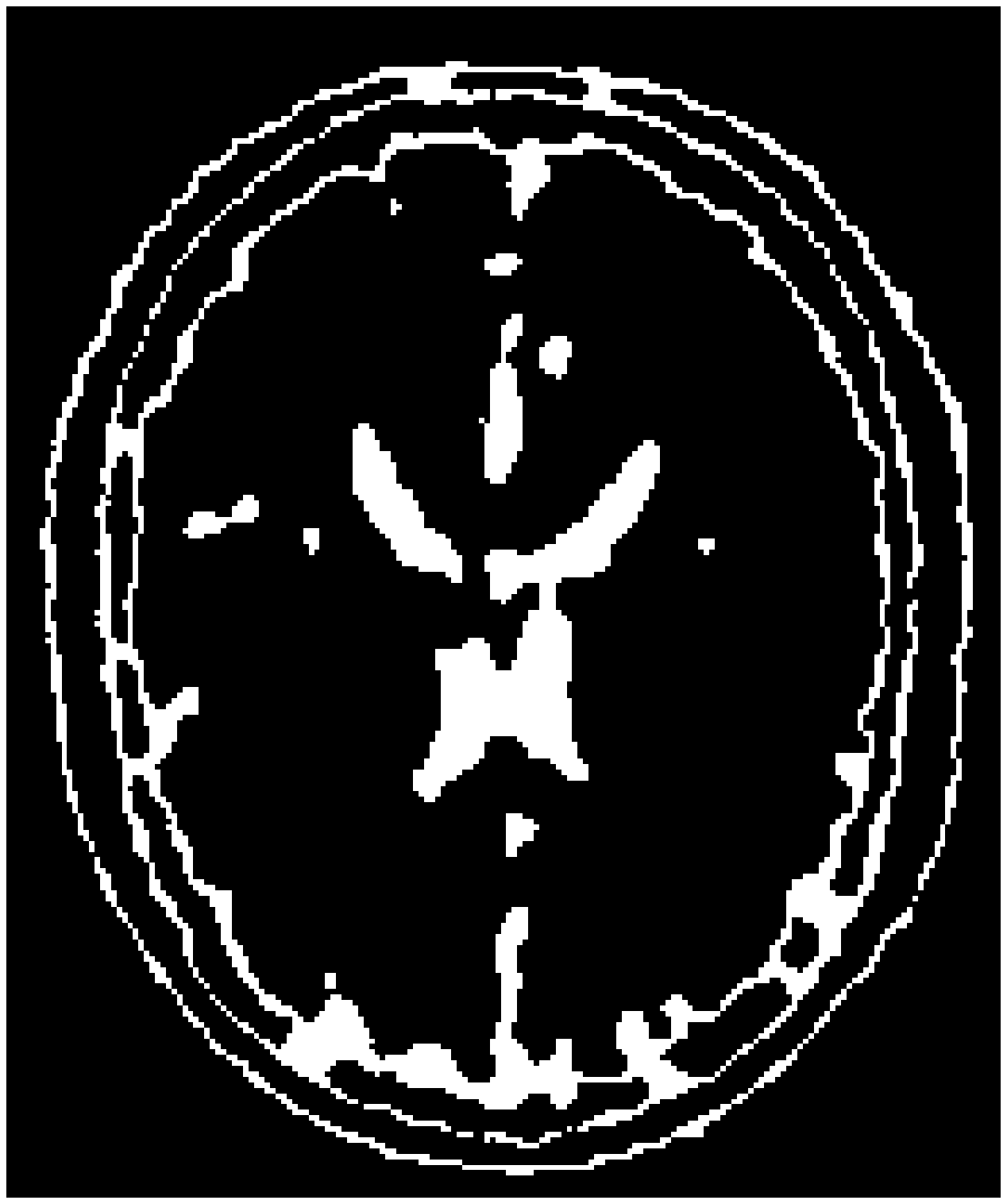}
{(b2) HTVUM}
\end{minipage}
\begin{minipage}[htbp]{0.215\linewidth}
\centering
\includegraphics[width=1.05in]{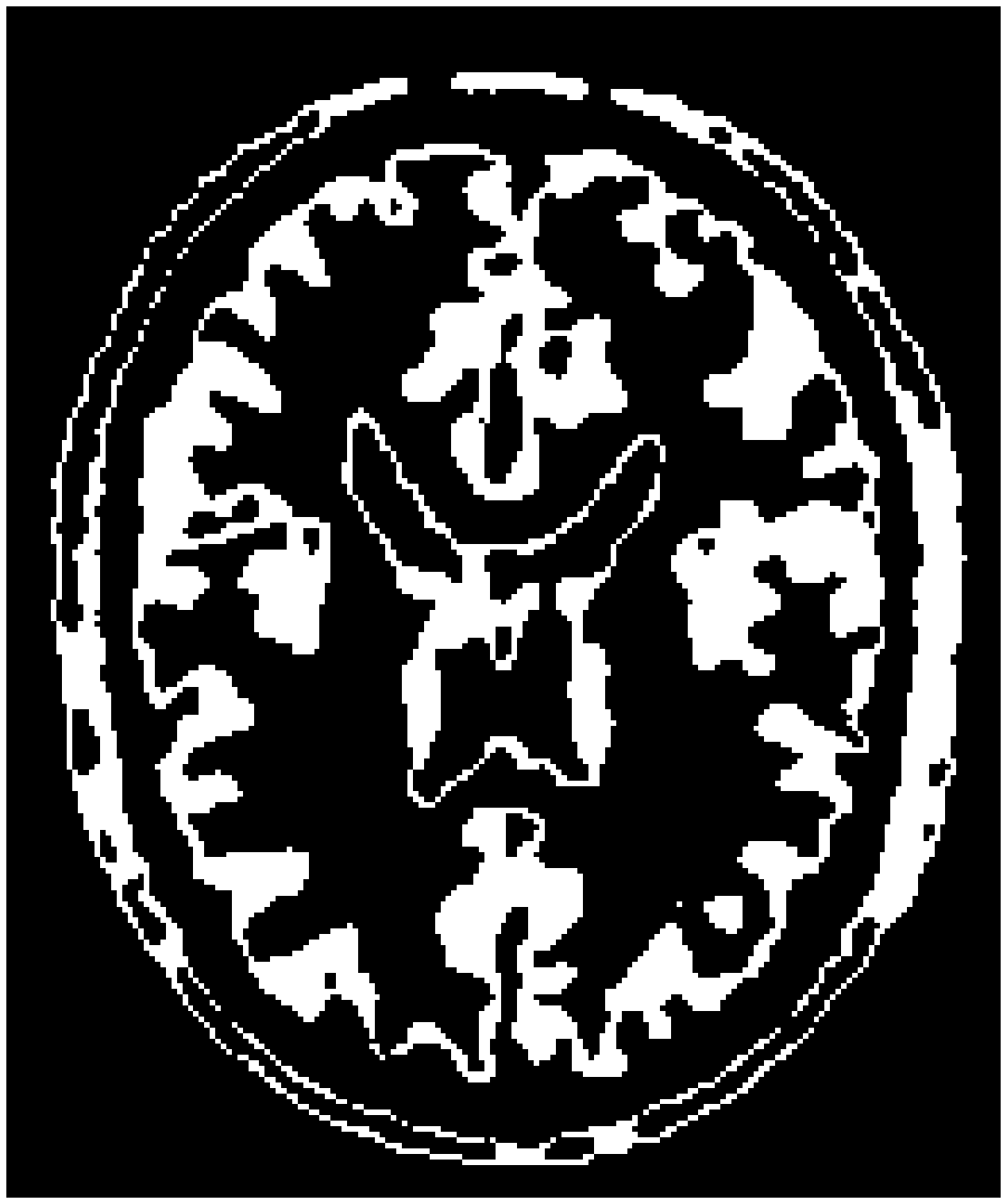}
{(cb3) HTVUM}
\end{minipage}
\begin{minipage}[htbp]{0.215\linewidth}
\centering
\includegraphics[width=1.05in]{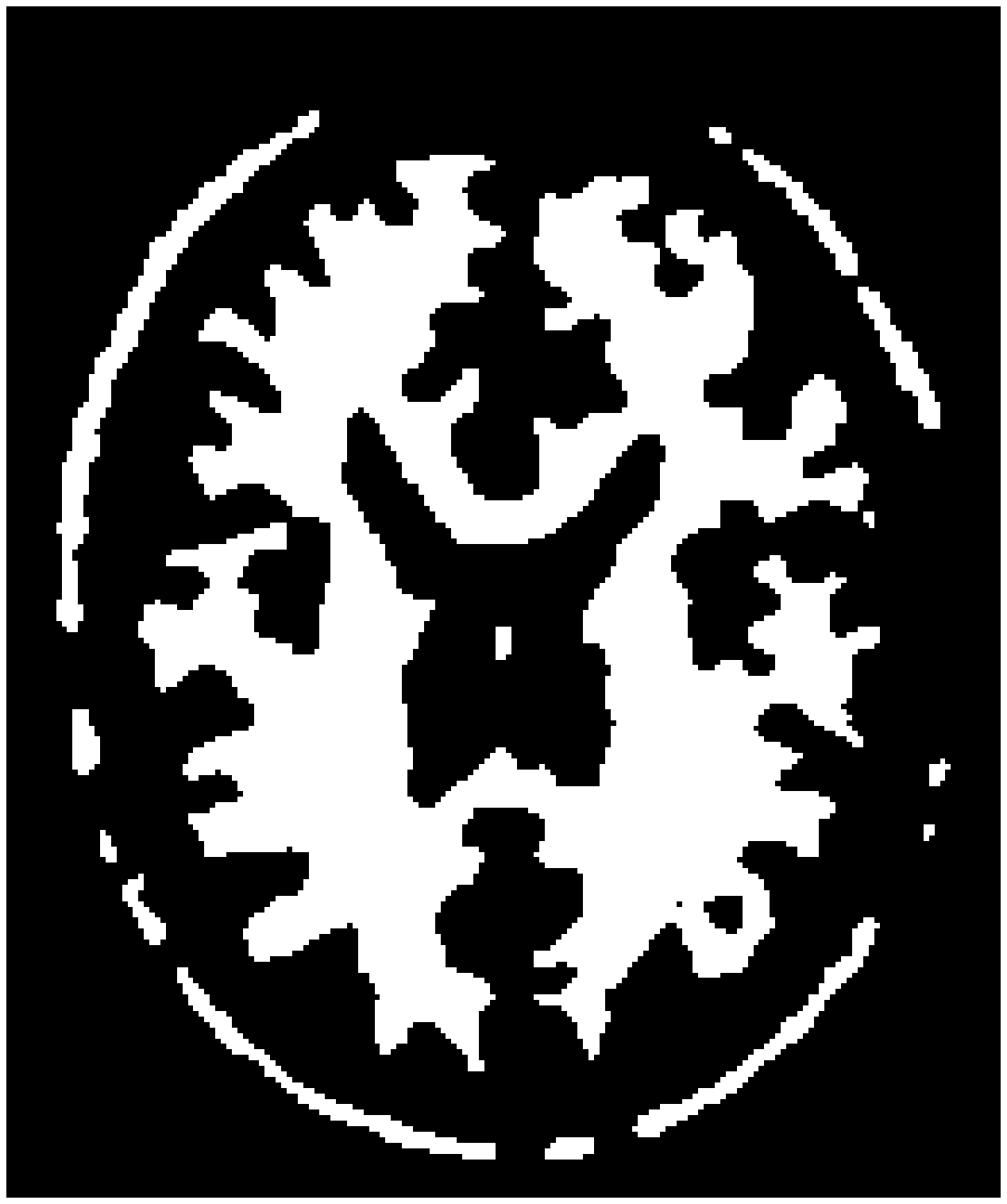}
{(b4) HTVWM}
\end{minipage}\\
\begin{minipage}[htbp]{0.215\linewidth}
\centering
\includegraphics[width=1.05in]{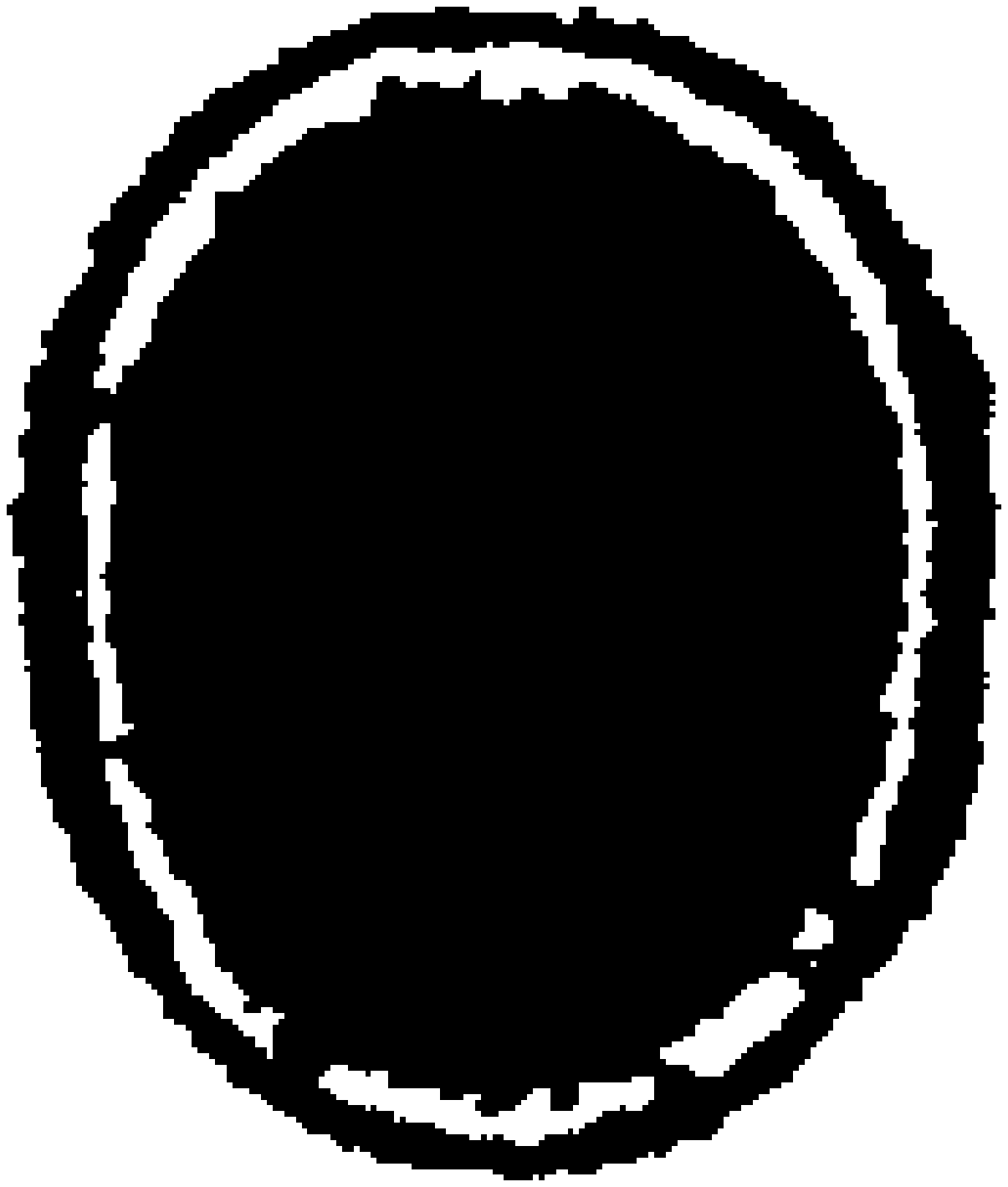}
{(c1) HTVWM}
\end{minipage}
\begin{minipage}[htbp]{0.215\linewidth}
\centering
\includegraphics[width=1.05in]{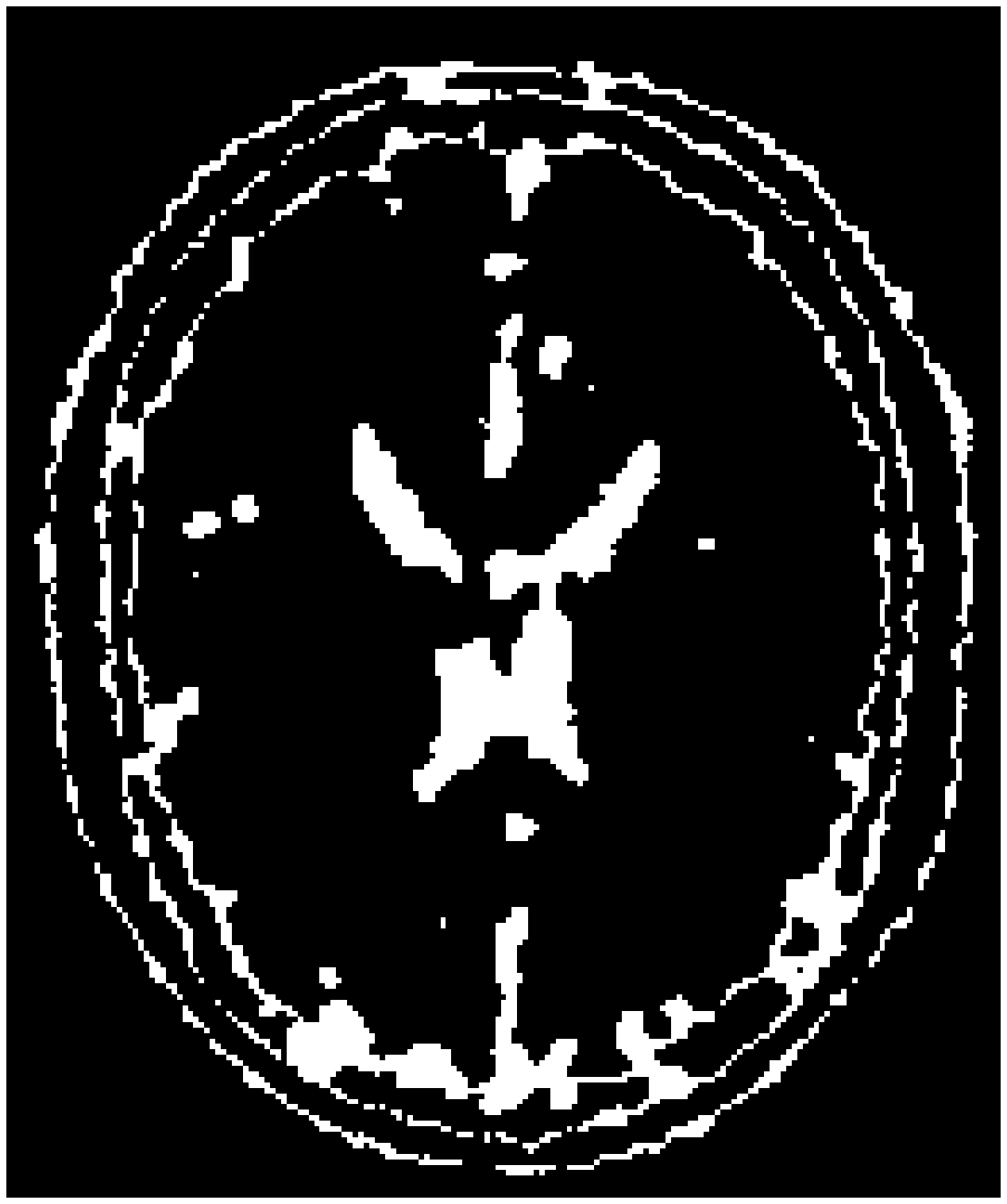}
{(c2) HTVWM}
\end{minipage}
\begin{minipage}[htbp]{0.215\linewidth}
\centering
\includegraphics[width=1.05in]{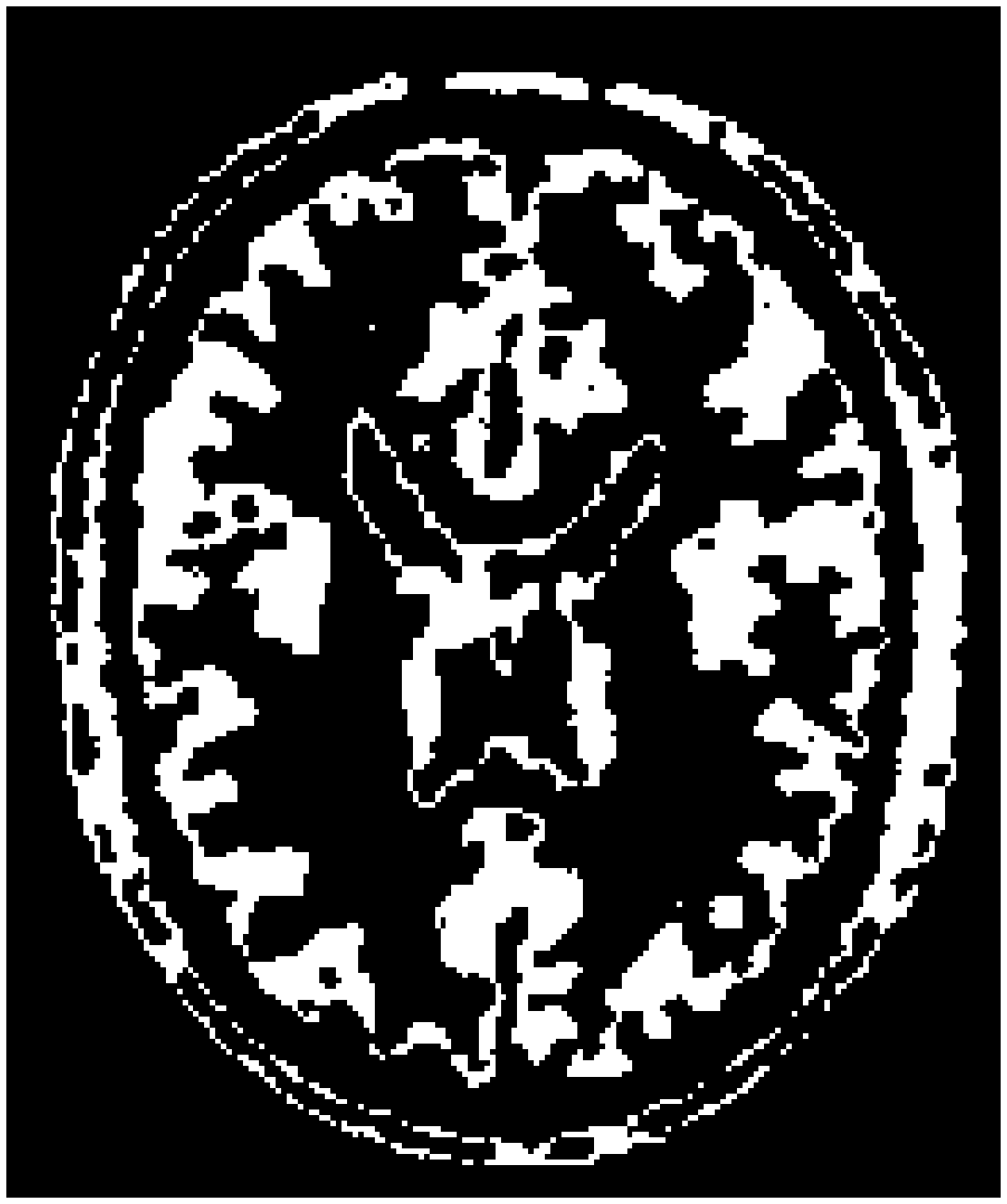}
{(c3) HTVWM}
\end{minipage}
\begin{minipage}[htbp]{0.215\linewidth}
\centering
\includegraphics[width=1.05in]{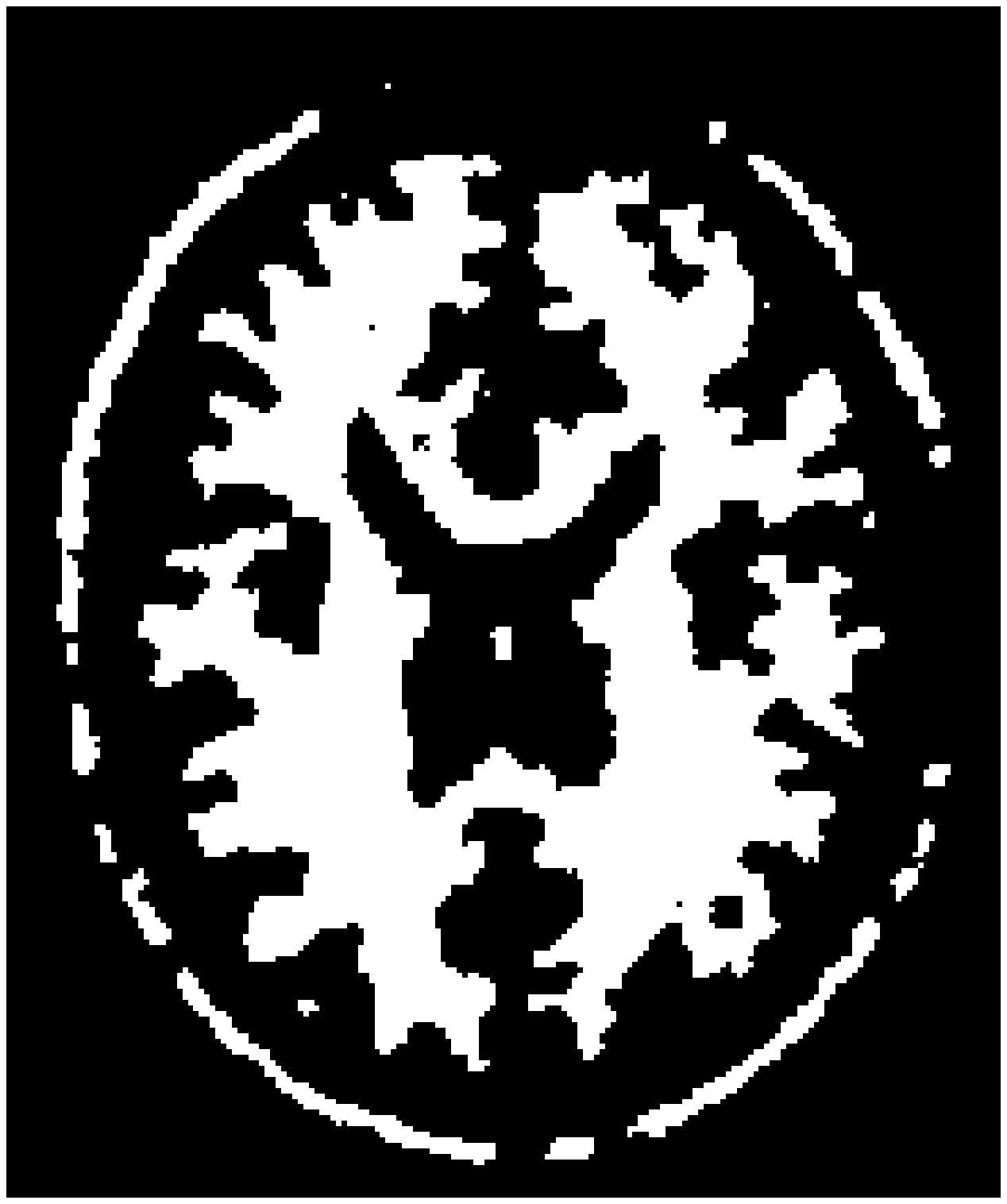}
{(c4) HTVWM}
\end{minipage}
\caption{\label{fig47}Comparisons of the four-phase segmentation results generated by the TSMSM, HTVUM, and HTVWM. Column: First: Background; Second: Tumor and bone; Third: Gray Matter; Forth: White matter. Parameters-TSMSM: $\lambda=51$ and $\gamma=0.03$; HTVUM: $\lambda=44$ and $\gamma=0.01$; HTVWM: $\lambda=54$ and $\gamma=0.008$.  }
\end{figure}

\begin{figure}[h!]
\centering
\begin{minipage}[htbp]{0.215\linewidth}
\centering
\includegraphics[width=1.05in]{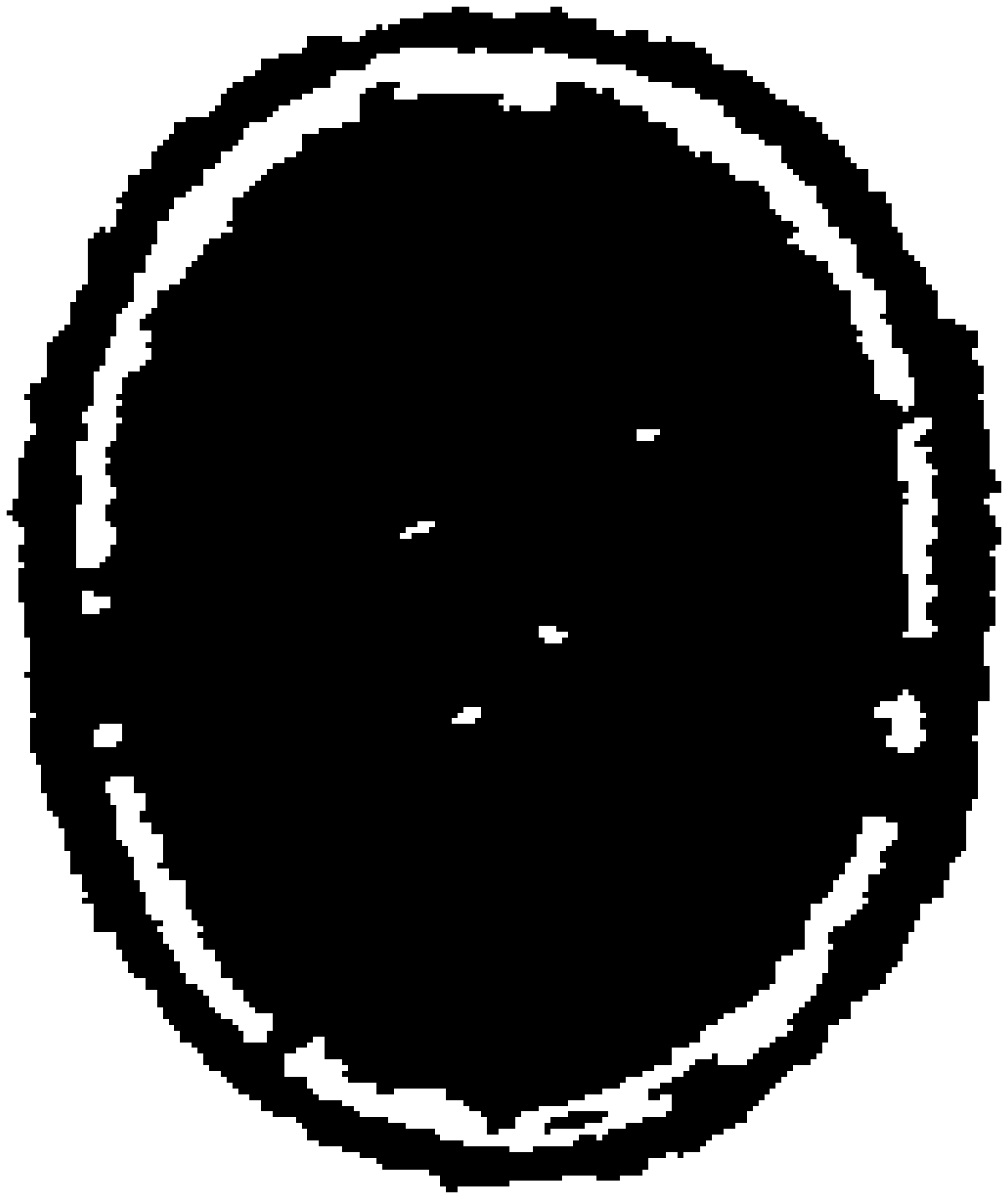}
{(a1) TSMSM}
\end{minipage}
\begin{minipage}[htbp]{0.215\linewidth}
\centering
\includegraphics[width=1.05in]{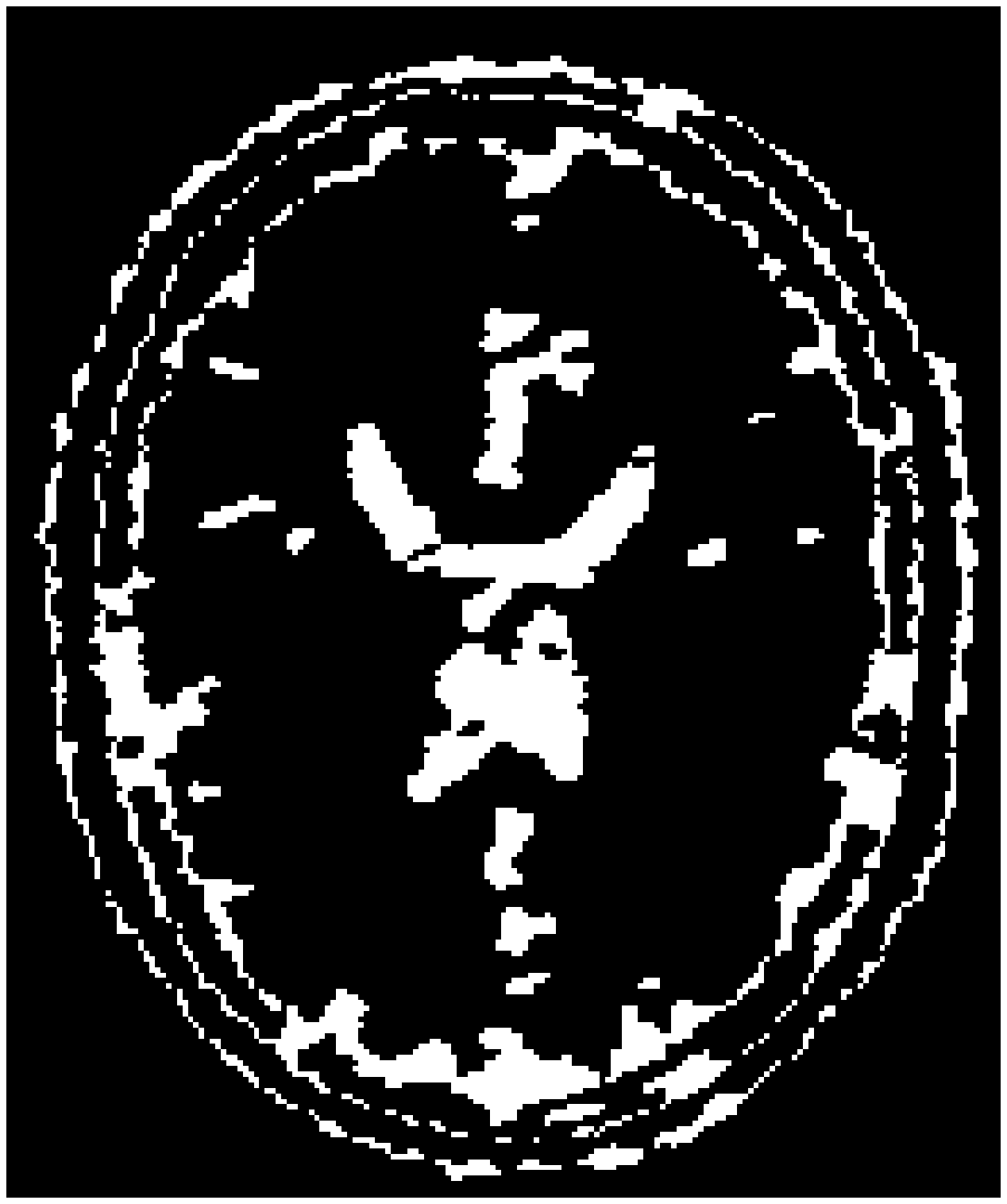}
{(a2) TSMSM}
\end{minipage}
\begin{minipage}[htbp]{0.215\linewidth}
\centering
\includegraphics[width=1.05in]{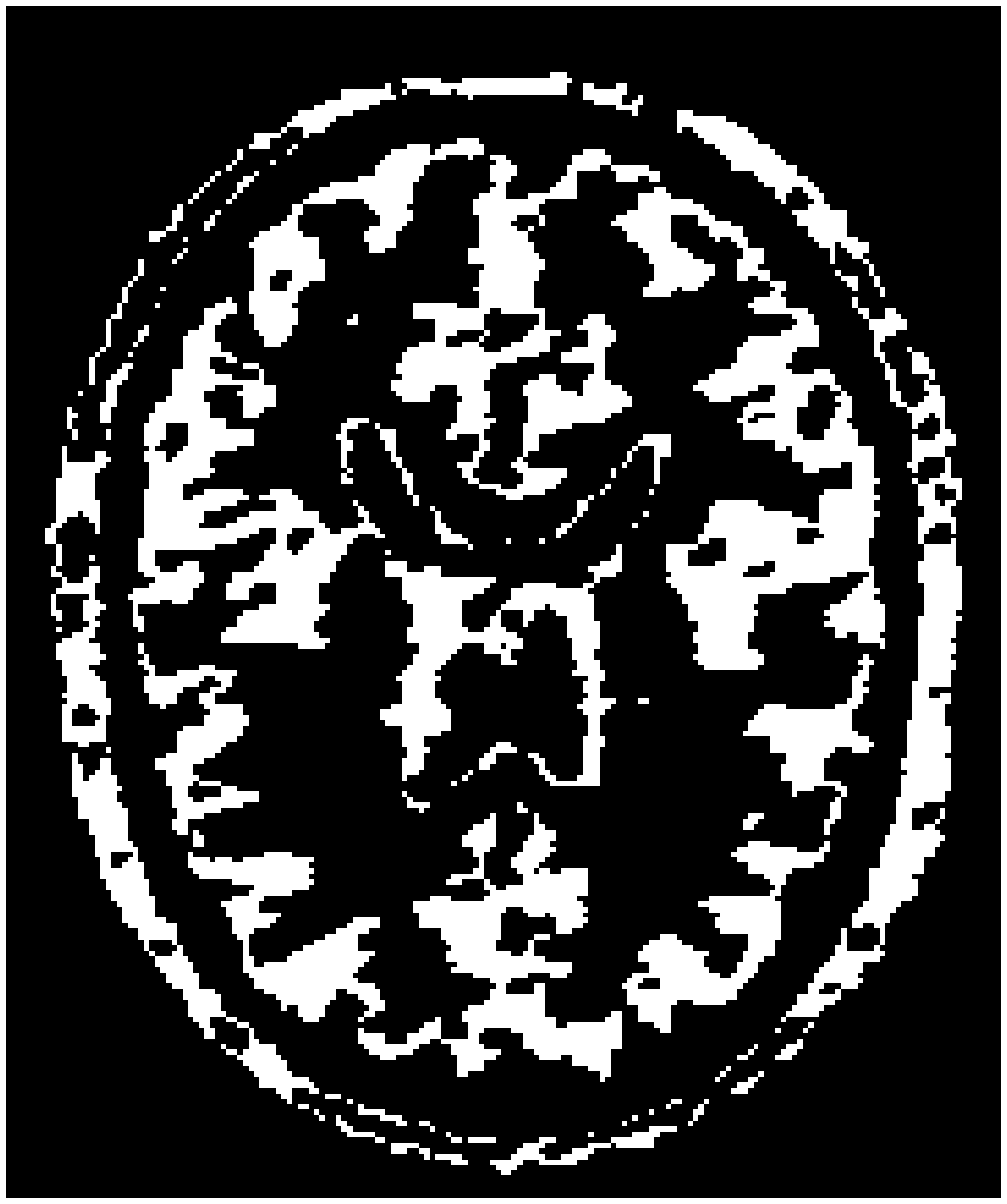}
{(a3) TSMSM}
\end{minipage}
\begin{minipage}[htbp]{0.215\linewidth}
\centering
\includegraphics[width=1.05in]{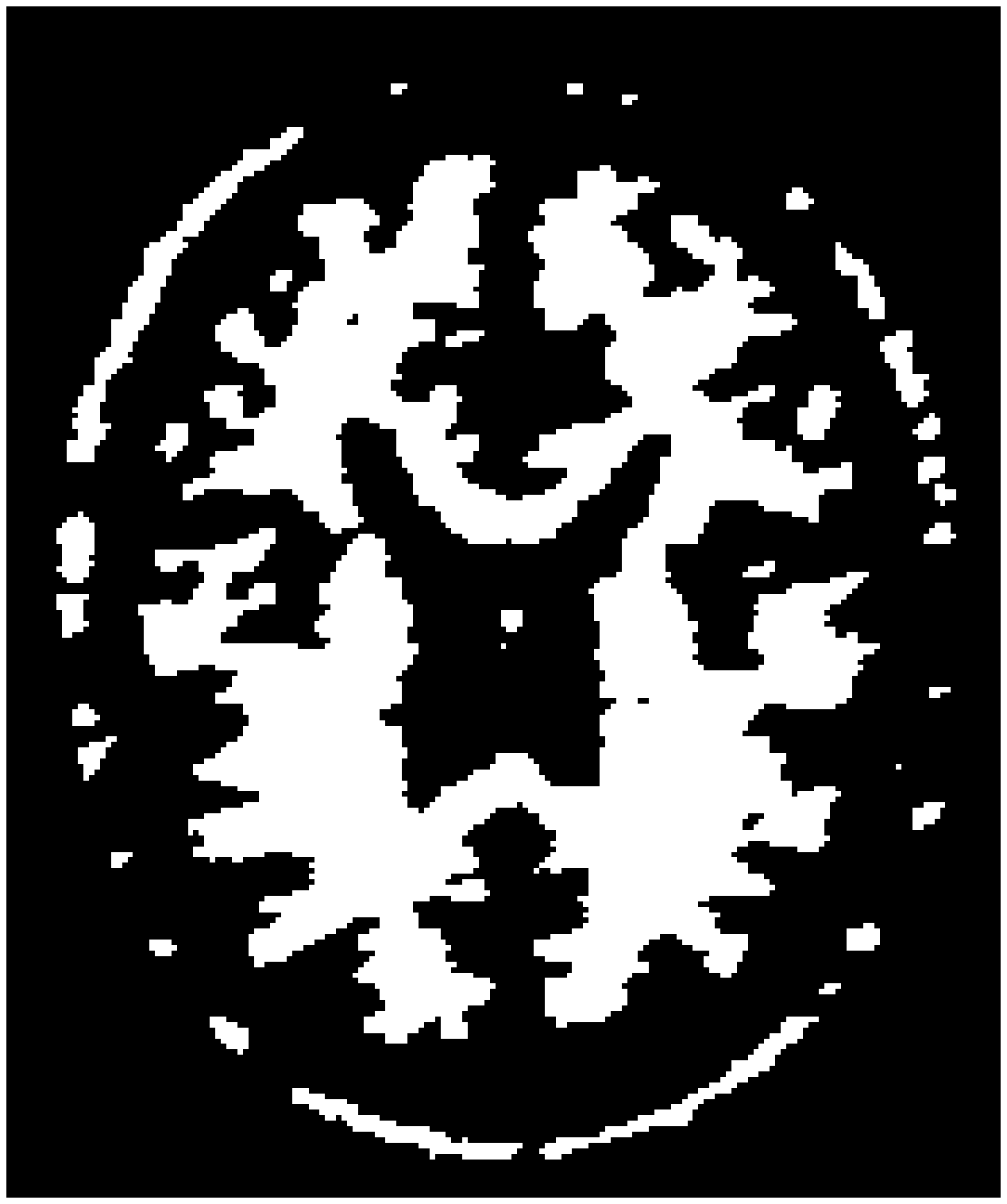}
{(a4) TSMSM}
\end{minipage}\\
\begin{minipage}[htbp]{0.215\linewidth}
\centering
\includegraphics[width=1.05in]{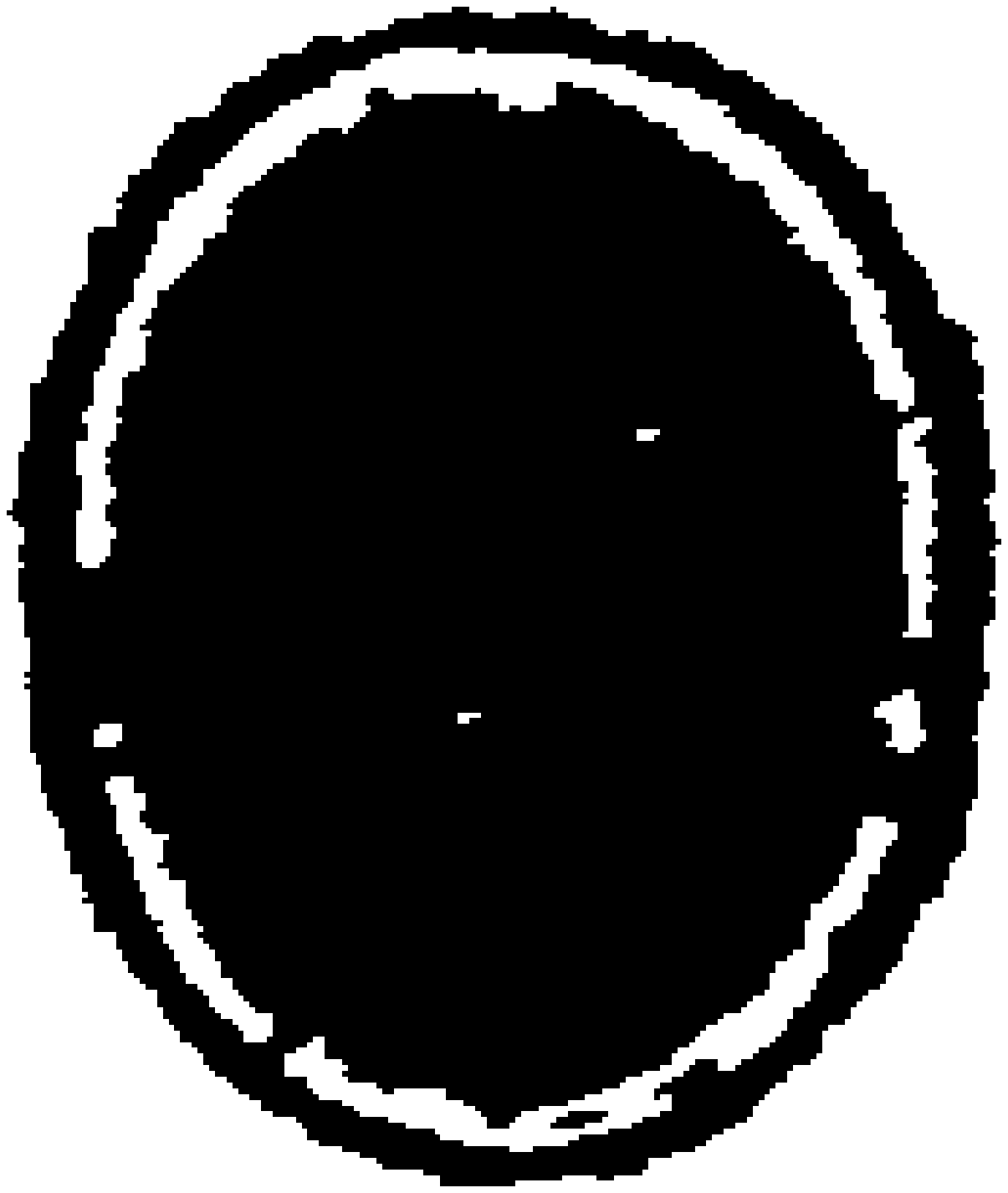}
{(b1) HTVUM}
\end{minipage}
\begin{minipage}[htbp]{0.215\linewidth}
\centering
\includegraphics[width=1.05in]{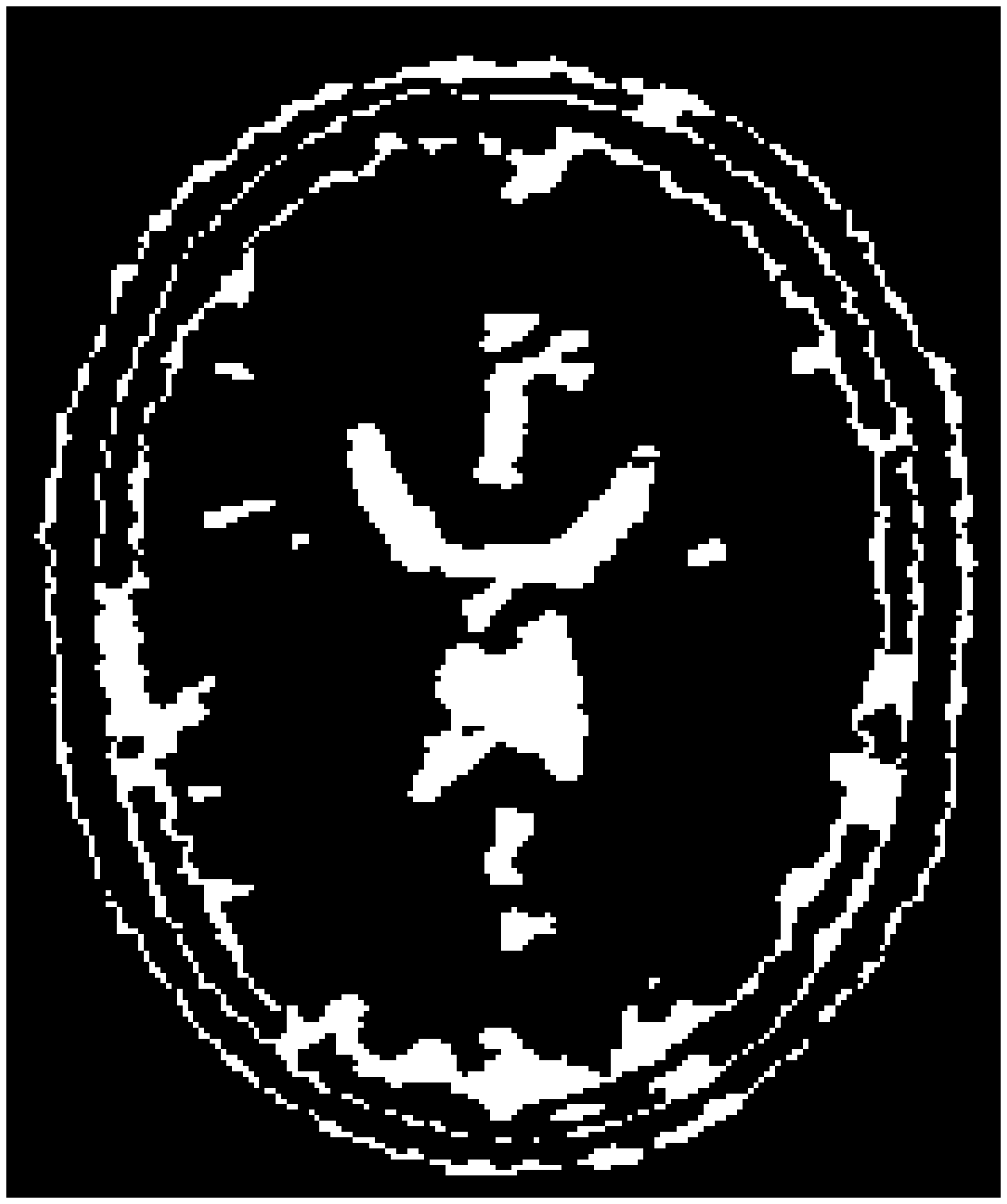}
{(b2) HTVUM}
\end{minipage}
\begin{minipage}[htbp]{0.215\linewidth}
\centering
\includegraphics[width=1.05in]{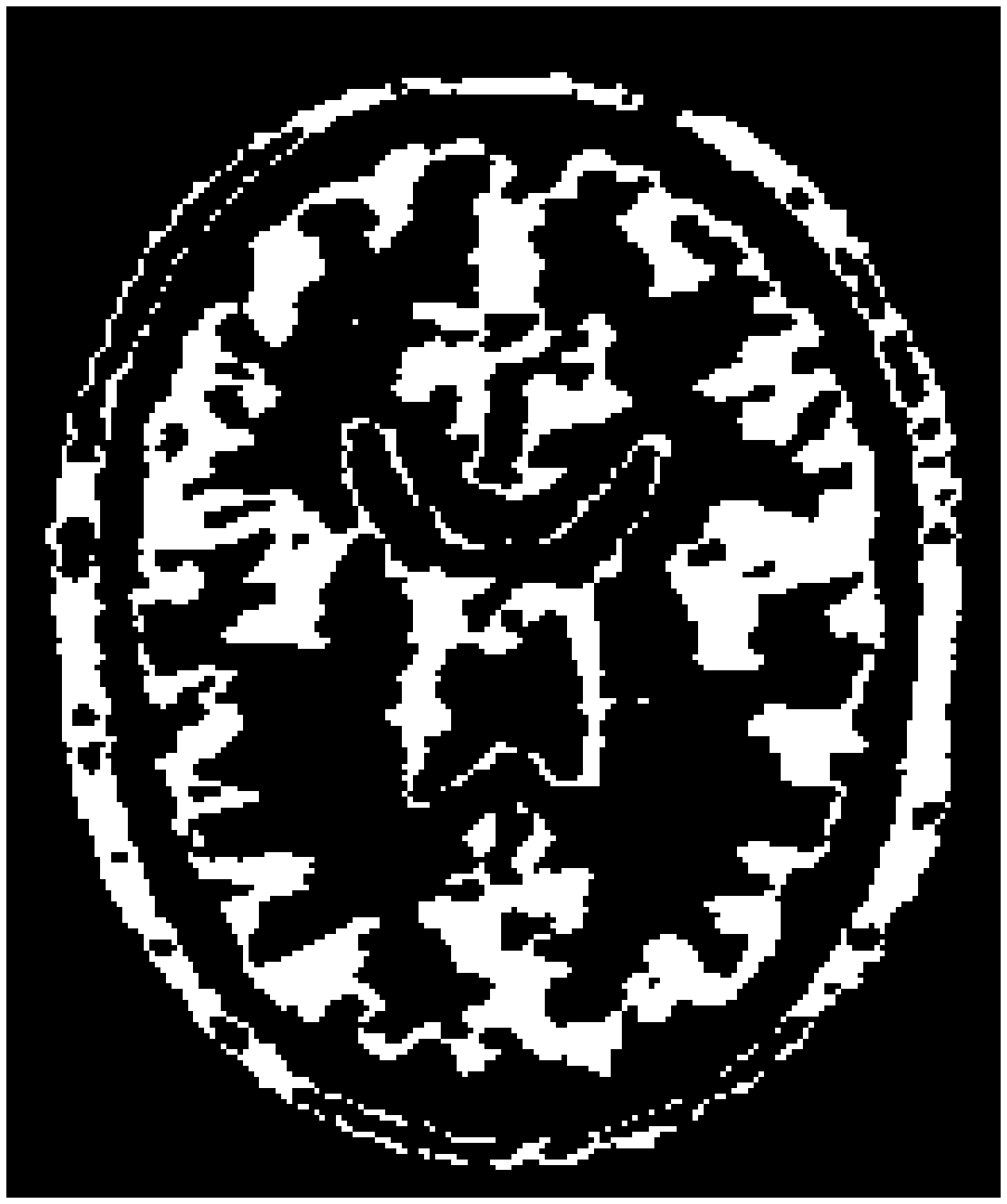}
{(b3) HTVUM}
\end{minipage}
\begin{minipage}[htbp]{0.215\linewidth}
\centering
\includegraphics[width=1.05in]{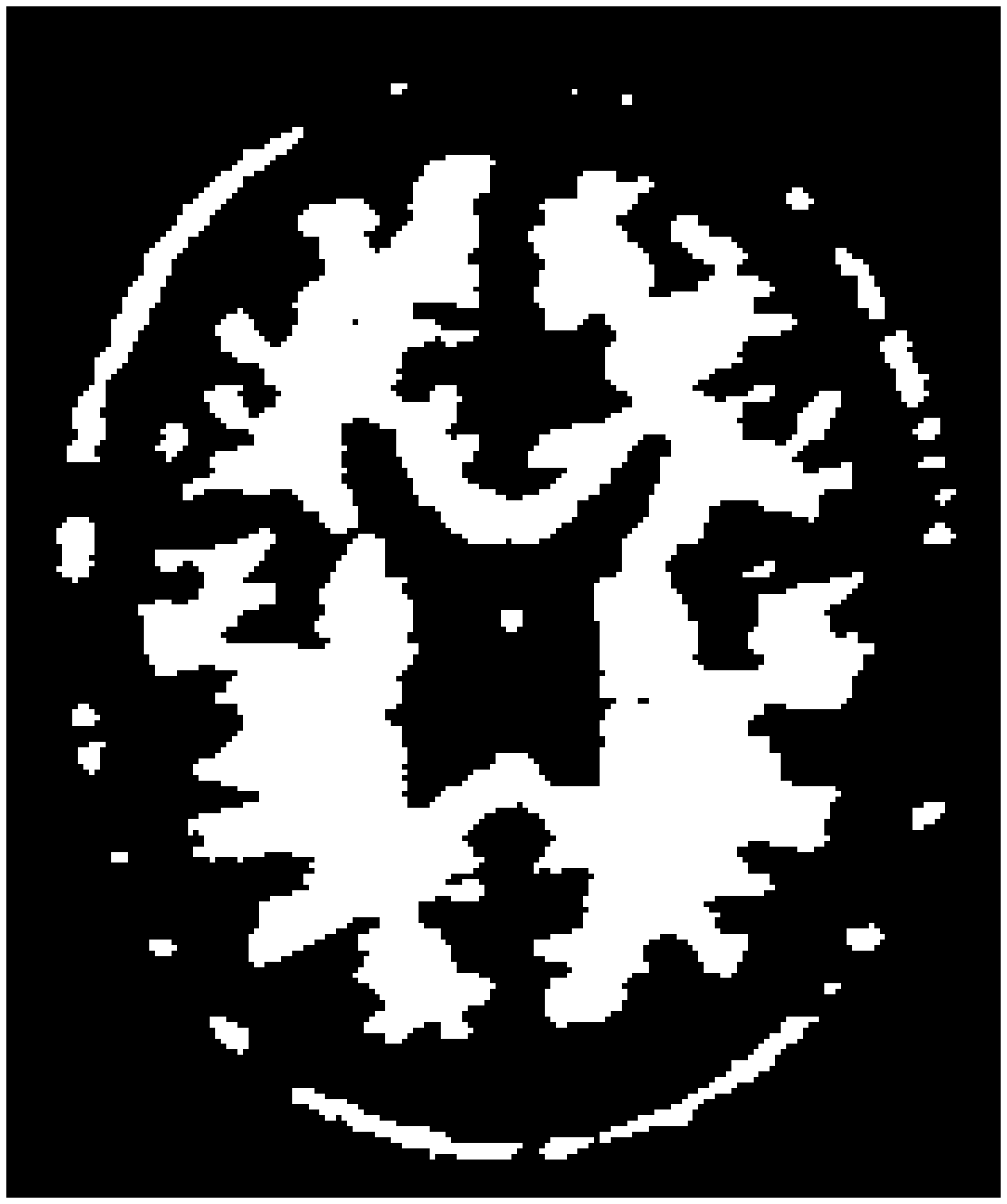}
{(b4) HTVUM}
\end{minipage}\\
\begin{minipage}[htbp]{0.215\linewidth}
\centering
\includegraphics[width=1.05in]{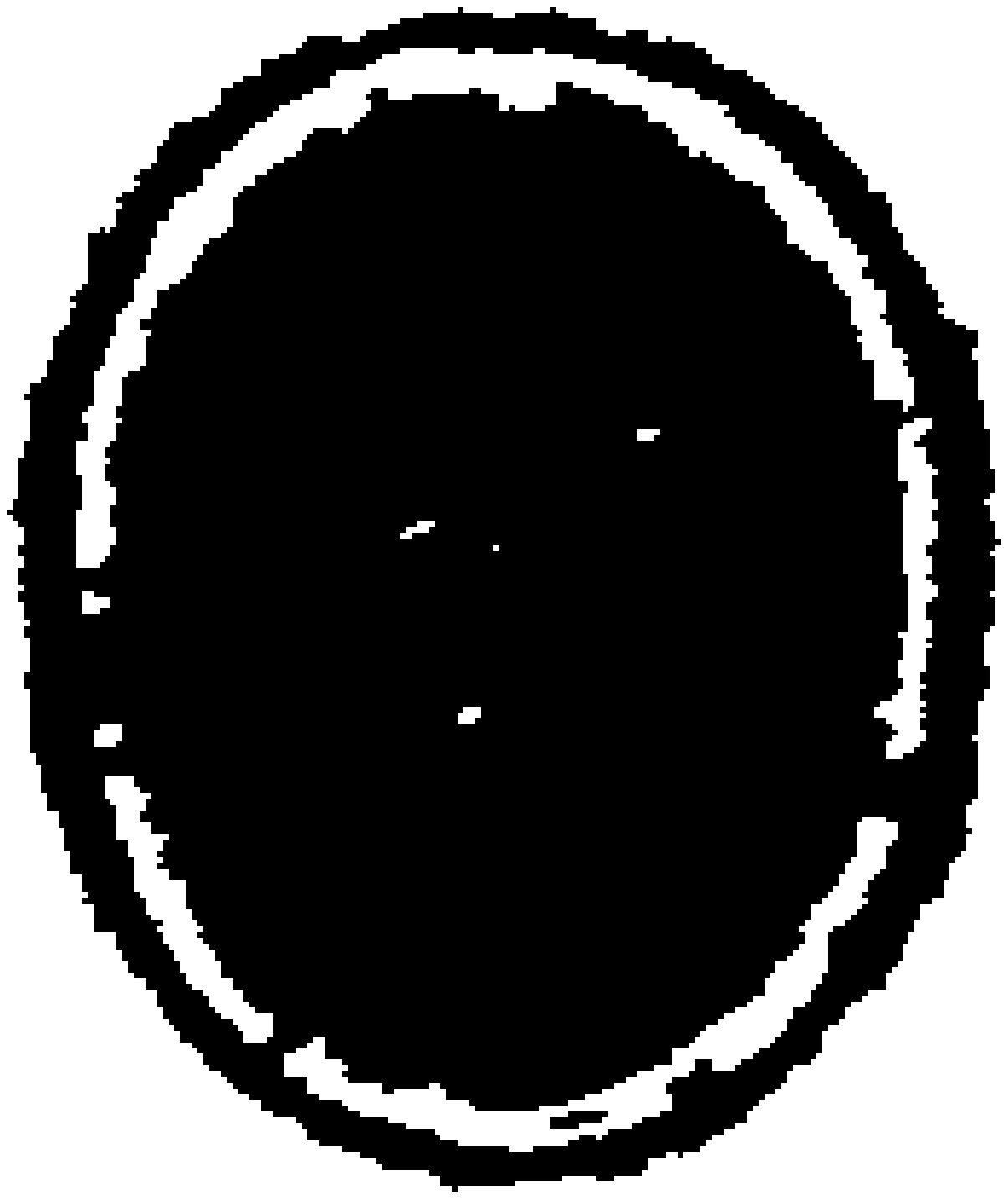}
{(c1) HTVWM}
\end{minipage}
\begin{minipage}[htbp]{0.215\linewidth}
\centering
\includegraphics[width=1.05in]{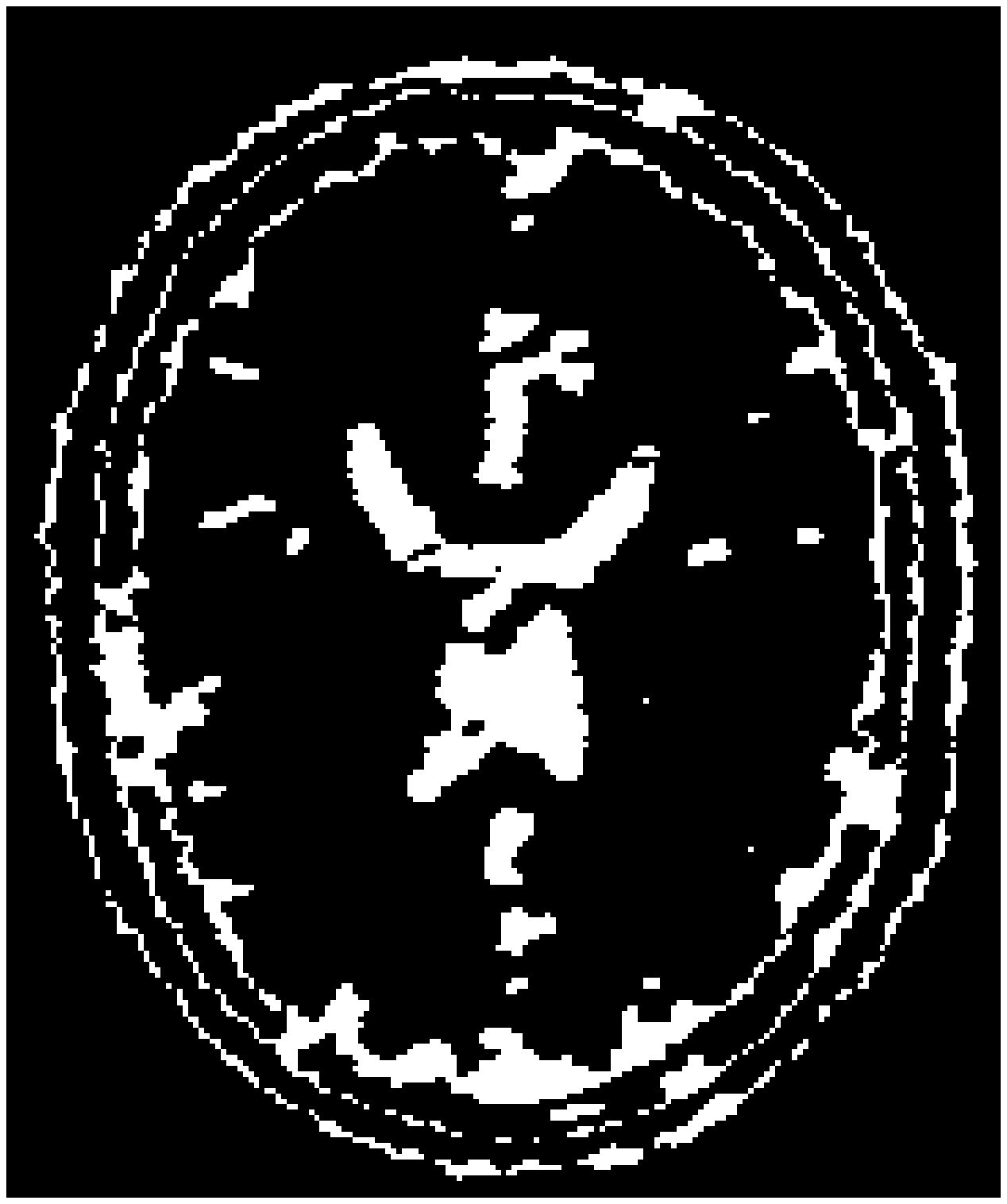}
{(c2) HTVWM}
\end{minipage}
\begin{minipage}[htbp]{0.215\linewidth}
\centering
\includegraphics[width=1.05in]{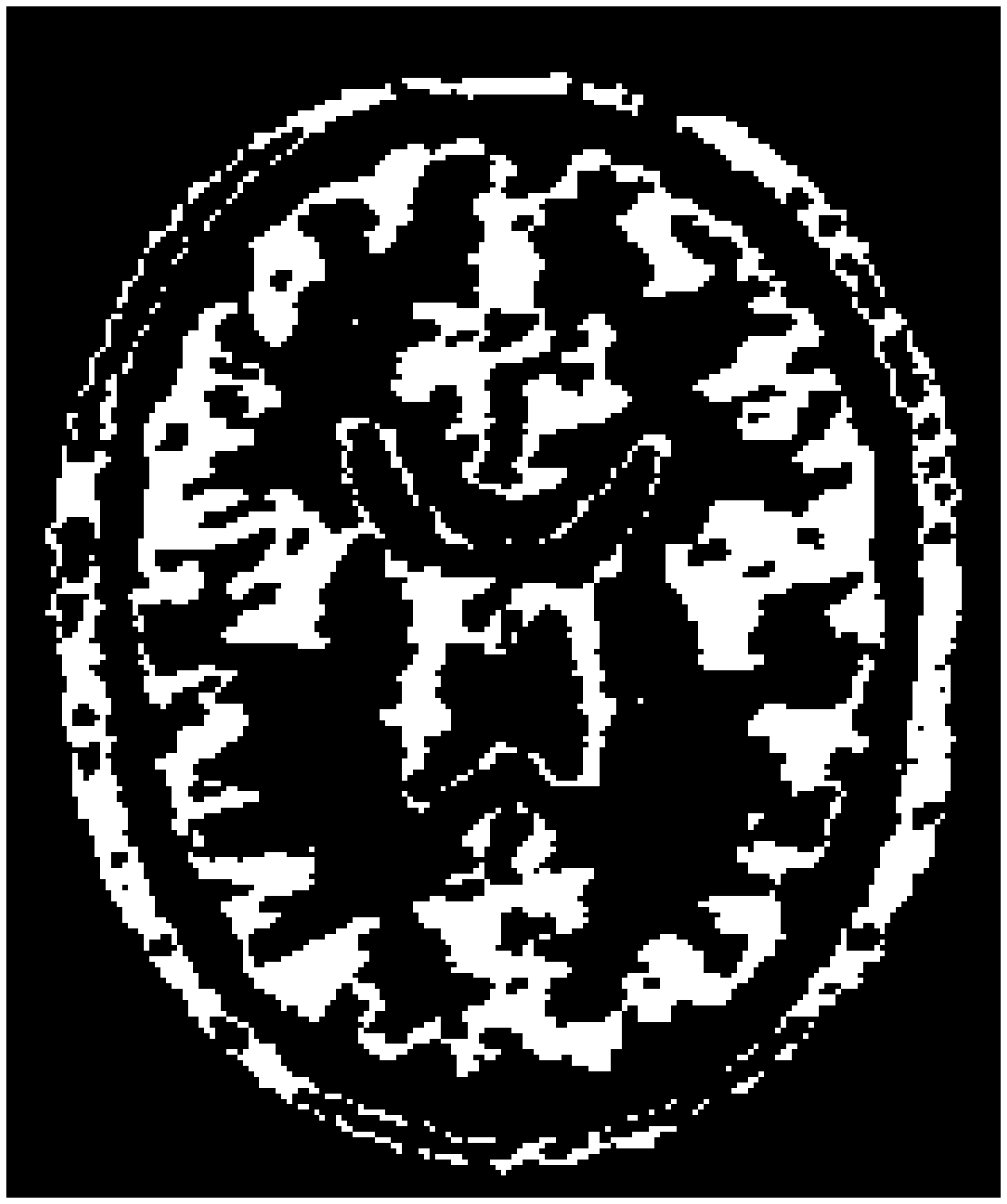}
{(c3) HTVWM}
\end{minipage}
\begin{minipage}[htbp]{0.215\linewidth}
\centering
\includegraphics[width=1.05in]{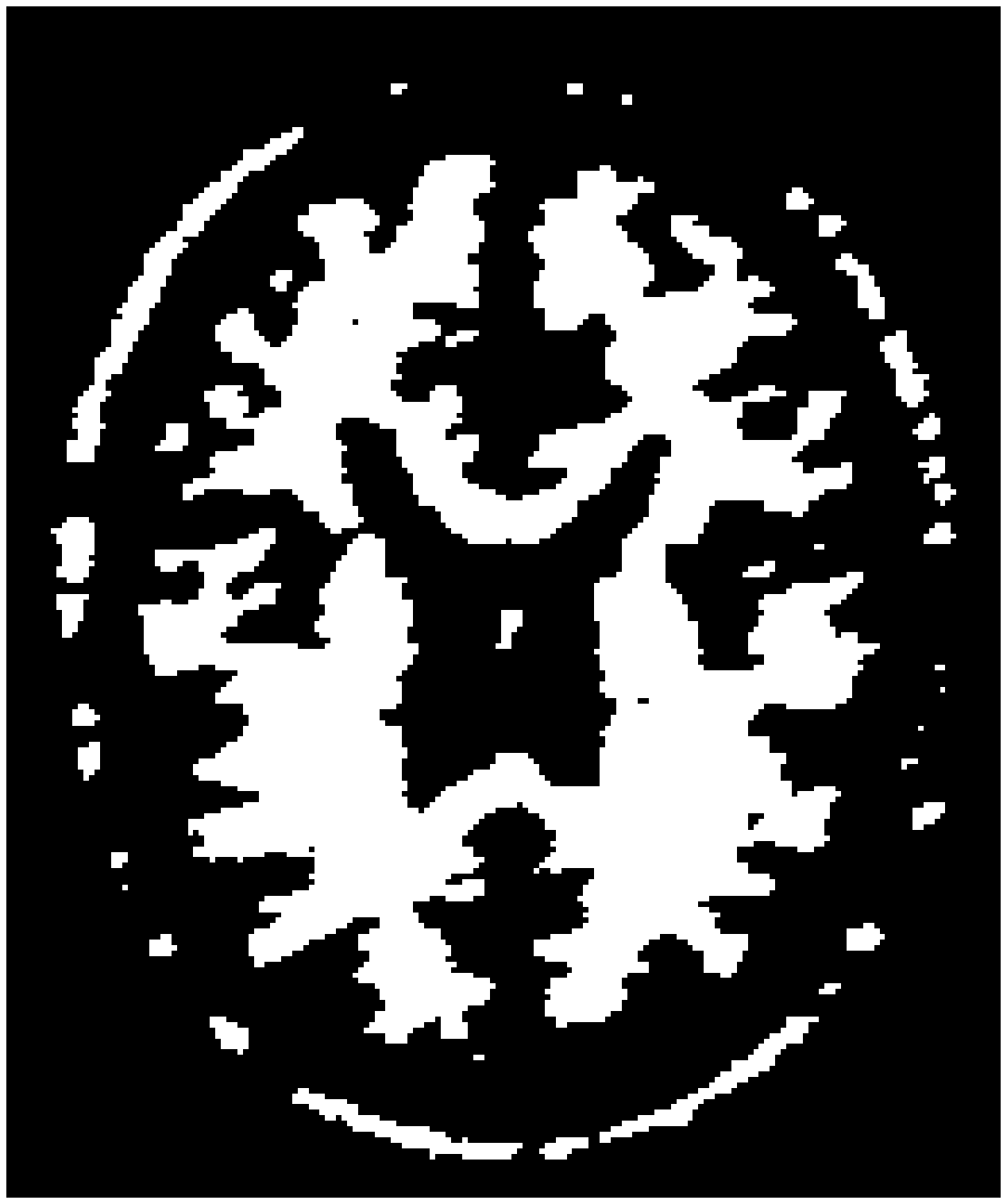}
{(c4) HTVWM}
\end{minipage}
\caption{\label{fig48}Comparisons of the four-phase segmentation results generated by the TSMSM, HTVUM, and HTVWM. Column: First: Background;  Second: Tumor and bone; Third: Gray Matter; Forth: White matter. Parameters-TSMSM: $\lambda=29$ and $\gamma=0.4$; HTVUM: $\lambda=25$ and $\gamma=0.04$;
HTVWM: $\lambda=26$ and $\gamma=0.02$. }
\end{figure}

\end{example}

\section{Conclusions}
In this paper, we proposed a two step strategy to segment the contaminated image by combining the hybrid total variation model (\ref{31}) and K-means clustering method. The  model (\ref{31}) convexly combined the total variation functional and the high order total variation functional with a weighted balance to obtain efficient restored image. In order to solve this nonsmoothing model,  we used the alternating  split Bregman method and also analyzed its convergence. Once the restored image is obtained, we clustered it into the expected phase by using the K-means clustering method. Numerical experiments illustrated the effectiveness of our proposed method compared with the methods in \cite{5,18,26}.  In the experiments we found that a better restoration image did not imply a better segmentation result. So choosing a suitable restoration measure is very important and is also  a part of the future work.

\section*{Acknowledgments} We would like to thank Dr. Huibin Chang for his suggestions on the numerical experiments and anonymous referees for their helpful comments and suggestions for improving this paper.


\medskip
Received May 2014; revised July 2015, accepted April 2016 by Inverse problem and Imaging.
\medskip


\begin{thebibliography}{99}

\bibitem{13}
\newblock E. Bae, J. Yuan, and X.-C. Tai,
\newblock Global minimization for continuous multiphase partitioning
problems using a dual approach,
\newblock \emph{International Journal of Computer Vision}, \textbf{92(1)}(2011), 112-129.

\bibitem{14}
\newblock Y. Boykov, O. Veksler, and R. Zabih,
\newblock Fast approximate energy minimization via graph cuts,
\newblock \emph{IEEE Transactions on Pattern Ananlysis and Machine Intelligence}, \textbf{23(11)}(2001), 1-18.

\bibitem{1001}
\newblock K. Bredies, K. Kunisch, and T. Pock.
\newblock Total generalized variation.
\newblock \emph{SIAM Journal on Imaging Sciences},  \textbf{3(3)}(2010):492-526.

\bibitem{10}
\newblock X. Bresson, S. Esedoglu, P. Vandergheynst, J. Thiran,  and S. Osher,
\newblock Fast global minimization of the active contour/snake model,
\newblock \emph{Journal of Mathematical Imaging and Vision}, \textbf{28(2)}(2007), 151-167.



\bibitem{11}
\newblock E. S. Brown, T. F. Chan,  and X. Bresson,
\newblock Completely convex formulation of the Chan-Vese image segmentation model,
\newblock  \emph{International Journal of Computer Vision}, \textbf{98(1)}(2012), 103-121.

\bibitem{335}
\newblock Y. Boykov, V, Kolmogorov, D. Cremers, and A. Delong,
\newblock An integral solution to surface evolution PDEs via geo-cuts.
\newblock \emph{ Proc. ECCV LCNS} 3953(2006), 409¨C422 (2006)
\bibitem{37}
\newblock Y. Boykov, O. Veksler, and R. Zabih,
\newblock Fast approximate energy minimization via graph cuts,
\newblock  \emph{IEEE Transactions on  Pattern Analysis and Machine Intelligence}, \textbf{23(11)}(2001), 1222-1239.

\bibitem{26}
\newblock X. Cai, R. Chan, and T. Zeng,
\newblock A two-stage image segmentation method using a convex variant of the Mumford-Shah model and thresholding,
\newblock  \emph{SIAM Journal on Image Science}, \textbf{6(1)}(2013), 368-390.
\bibitem{43}J.-F. Cai, S. Osher, and Z. Shen,
\newblock Split Bregman methods and frame based image restoration,
\newblock \emph{SIAM: Multiscale Modeling and Simulation},  \textbf{8(2)}(2009), 337-369.
\bibitem{1}\newblock  V. Caselles, R. Kimmel, and G. Sapiro,
     \newblock Geodesic active contours,
     \newblock \emph{International Journal of Computer Vision}, \textbf{22(1)}(1997), 61-79.
\bibitem{38}
\newblock A. Chambolle, D. Cremers, and T. Pock,
\newblock A convex approach to minimal partitions,
\newblock  \emph{SIAM Journal on Image Science}, \textbf{5(4)}(2012), 1113-1158.
\bibitem{4}
\newblock T. Chan, S. Esedoglu, and M. Nikolova,
\newblock Algorithms for finding global minimizers of image
segmentation and denoising models,
\newblock \emph{SIAM Journal on Applied Mathematics}, \textbf{66(5)}(2006), 1632-1648.

\bibitem{28}
\newblock T. Chan, A. Marquina, and P. Mulet,
\newblock  High-order total variation-based image restoration,
\newblock  \emph{SIAM Journal on Scientific Computing},  \textbf{22(2)}(2000), 503-516.
\bibitem{5}
\newblock T. Chan and L.  Vese,
\newblock  Active contours without edges,
\newblock  \emph{IEEE Transactions on Image Processing}, \textbf{10(2)}(2001), 266-77.
\bibitem{31}
\newblock C. Chen, B. He, and X. Yuan,
\newblock  Matrix completion via an alternating direction method,
\newblock  \emph{IMA Journal of Numerical Analysis}, \textbf{32(1)}(2012), 227-245.
\bibitem{15}
\newblock A. Delong, A. Osokin, H. Isack, and Y. Boykov,
\newblock  Fast approximate energy minimization with label costs,
\newblock \emph{International Journal of Computer Vision}, \textbf{96(1)}(2012), 1-27.


\bibitem{17}
\newblock S. Esedoglu and Y. Tsai,
\newblock  Threshold dynamics for the piecewise constant Mumford-Shah functional,
\newblock  \emph{Journal of Computational Physics}, \textbf{211(1)}(2006), 367-384.

\bibitem{18}
\newblock Y. Gu, L.-L. Wang, and X.-C. Tai,
\newblock A direct approach towards global minimization for multiphase labeling and segmentation problems,
\newblock  \emph{IEEE Transactions on Image Processing}, \textbf{21(5)}(2012), 2399-2411.

\bibitem{336}
\newblock L. Grady,
\newblock The piecewise smooth Mumford-Shah functional on
an arbitrary graph.
\newblock \emph{IEEE Transactions on Image Processing}. textbf{18(11)}(2009),
2547-2561.

\bibitem{2}
\newblock M. Kass, A. Witkin, and D. Terzopoulos,
\newblock  Snakes: Active contour models,
\newblock   \emph{International Journal
of Computer Vision}, \textbf{1(4)}(1988)321-331.

\bibitem{16}
\newblock J. Lellmann, B. Lellmann, F. Widmann, and C. Schnorr,
\newblock  Discrete and continuous models for partitioning problems,
\newblock  \emph{International Journal of Computer Vision}, \textbf{104(3)}(2013), 241-269.



\bibitem{23}
\newblock J. Lellmann, J. Kappes, J. Yuan, F. Becker, and C. Schnoor,
\newblock  Convex multi-class image labeling by simplex-constrainted total variation,
\newblock \emph{Scale Space and Variational Methods in Computer Vision}, \textbf{5567}(2009), 150-162.


\bibitem{24}
\newblock J. Lellmann and C. Schnoor,
\newblock Continuous multiclass labeling approaches and algorithms,
\newblock  \emph{Journal of Imaging Science}, \textbf{4(4)}(2011), 1049-1096.

\bibitem{40}
\newblock F. Li, M. Ng, T. Y. Zeng,  and C. Shen,
\newblock A multiphase image segmentation method based on fuzzy
region competition,
\newblock \emph{SIAM Journal on Scientific Computing}, \textbf{3(3)}(2010), 277-299.
\bibitem{35}
\newblock F. Li, C. Shen, J. Fan, and C. Shen,
\newblock Image restoration combining a total variational filter and a fourth-order filter,
\newblock \emph{Journal of Visual Communication and Image Representation}, \textbf{18(4)}(2007), 322-330.
\bibitem{12}
\newblock J. Lie, M. Lysaker, and X.-C. Tai,
\newblock A variant of the level set method and applications to image segmentation,
\newblock  \emph{Mathematics of Computation}, \textbf{75(255)}(2006), 1155-1174.
\bibitem{22}
\newblock J. Lie, M. Lysaker, and X.-C. Tai,
\newblock A binary level set model and some applications to Mumford-Shah image segmentation,
\newblock \emph{IEEE Transactions on Image Processing}, \textbf{15(5)}(2006), 1171-1181.
\bibitem{29}
\newblock M. Lysaker, A. Lundervold, and X. Tai,
\newblock   Noise removal using fourth-order partial differential equation with applications to medical magnetic resonance images in space and time,
\newblock  \emph{IEEE Transactions on Image Processing}, \textbf{12(12)}(2003), 1579-1590.
\bibitem{200}
\newblock D. Krishnan, Q. Pham , and A. Yip.
\newblock A primal-dual active-set algorithm for bilaterally constrained
total variation deblurring and piecewise constant Mumford-Shah segmentation problems.
\newblock  \emph{Advances in Computational Mathematics}, \textbf{31(1-3)}(2009), 237-266.
\bibitem{41}
\newblock B. Macqueen,
\newblock Some methods for classification and analysis of multivariate observations,
\newblock  \emph{In: Proceedings of 5th Berkeley Symposium on Mathematical Statistics and Probability},  \textbf{1}(1967), 281-297.
\bibitem{33}
\newblock A. Marquina and S. J. Osher,
\newblock Image super-resolution by TV-regularization and Bregman iteration,
\newblock \emph{Journal of Scientific Computing},  \textbf{37(3)}(2008), 367-382.
\bibitem{3}
\newblock D. Mumford and J. Shah,
\newblock Optimal approximations by piecewise smooth functions and associated
variational problems,
\newblock \emph{Communications on Pure and Applied Mathematics}, \textbf{42(5)}(1989), 577-685.
\bibitem{20}
\newblock C. Nieuwenhuis, E. Toppe, and D. Cremers,
\newblock A survey and comparison of discrete and continuous multi-label optimization approaches for the Potts model,
\newblock   \emph{International Journal of Computer Vision}, \textbf{104(3)}(2013), 223-240.
\bibitem{36}
\newblock K. Papafitsoros and C. Schonlieb,
\newblock A combined first and second variational approach for image reconstruction,
\newblock  \emph{Journal of Mathematical Imaging and Vision}, \textbf{48(2)}(2014), 308-338.
\bibitem{19}
\newblock T. Pock, A. Chambolle, H. Bischof, and D. Cremers,
\newblock  A convex relaxation approach for computing minimal partitions,
\newblock   \emph{In: IEEE Conference on Computer Vision and Pattern Recognition(CVPR)},  (2009), 810-817.

\bibitem{27}
\newblock L. Rudin, S. Osher, and E. Fatemi,
\newblock  Nonlinear total variation based noise removal algorithms,
\newblock   \emph{Physica D},  \textbf{60(1-4)}(1992), 259-268.
\bibitem{34}
\newblock S. Setzer,
\newblock Split Bregman algorithm, Douglas-Rachford splitting and frame shrinkage,
\newblock  \emph{In Proceedings of the Second International Conference on Scale Space and VariationalMethods in Computer Vision}, \textbf{5567}(2009), 464-476.
\bibitem{199}
\newblock R. Schafer, R. Mersereau, and M. Richaards.
\newblock Constrained iterative restoration algorithms.
\newblock  \emph{Proceedings of the IEEE}, \textbf{69(4)}(1981), 432-450.
\bibitem{7}
\newblock O. Tobias and R. Seara,
\newblock Image segmentation by histogram thresholding using fuzzy sets,
\newblock  \emph{IEEE Transactions on Image Processing}, \textbf{11(12)}(2002), 1457-1465.

\bibitem{9}
\newblock L. Vese and T. Chan,
\newblock A multiphase level set framework for image segmentation using the Mumford¨CShah model,
\newblock   \emph{International Journal of Computer Vision}, \textbf{50(3)}(2002), 271-293.



\bibitem{8}
\newblock X. Wang, D. Huang, and H. Xu,
\newblock An efficient local Chan-Vese model for image segmentation,
\newblock \emph{Pattern Recognit}, \textbf{43(3)}(2010), 603-618.
\bibitem{30}
\newblock C. Wu and X. Tai,
\newblock Augmented Lagrangian method, dual methods, and split Bregman iteration for ROF, vectorial TV, and high order models,
\newblock  \emph{SIAM Journal on Imaging Sciences}, \textbf{3(3)}(2010), 300-339.
\bibitem{32}
\newblock J. Yang and Y. Zhang,
\newblock Alternating direction algorithms for $\ell^1$ problems in compressive sensing,
\newblock \emph{SIAM Journal on Scientific Computing},  \textbf{33(1)}(2011), 250-278.
\bibitem{130}
\newblock J. Yuan, E. Bae, X.-C. Tai, and Y. Boykov,
\newblock A continuous max-flow approach to Potts model,
\newblock \emph{11th European Conference on Computer Vision (ECCV)}, (2010), 379-392.
\bibitem{131}
\newblock J. Yuan, E. Bae, X.-C. Tai, and Y. Boykov,
\newblock A study on continuous max-flow and min-cut approaches,
\newblock \emph{IEEE Conference on Computer Vision and Pattern Recognition (CVPR)},  (2010), 2217-2224.
\bibitem{1002}
\newblock J. Yuan, C. Schn\"{o}rr, and Gabriele Steidl,
\newblock Total-variation based piecewise affine regularization.
\newblock \emph{Scale Space and Variational Methods in Computer Vision}, \textbf{5567}(2009), 552-564.
\bibitem{25}
\newblock C. Zach, D. Gallup, J. Frahm, and M. Niethammer,
\newblock  Fast global labeling for real-time stereo using multiple plane sweeps,
\newblock   \emph{In: Vision, modeling, and visualization}, (2008)243-252.
\bibitem{42}
\newblock R. Zhang, X. Bresson, and X.-C. Tai,
\newblock Four color theorem and convex relaxation for image segmentation with any number of regions,
\newblock \emph{Inverse Problems and Imaging}, \textbf{7(3)}(2013), 1099-1113.
\bibitem{6}
\newblock S. Zhu and A. Yuille,
\newblock  Region competition: unifying snakes, region growing, and Bayes/MDL for
multi-band image segmentation,
\newblock   \emph{IEEE Transactions on Pattern Analysis and Machine Intelligence}, \textbf{18(9)}(1996), 884-900.



































































%
%
%
%
%
%
%
%
%
%
%
%

\end{thebibliography}
\end{document}